\def\C{\mathbb{C}}
\def\c2{\mathbb{C}^2}
\def\R{\mathbb{R}}
\def\Z{\mathbb{Z}}
\def\P{\mathbb{P}}
\def\E{\mathcal{E}}
\def\W{\mathcal{W}}
\def\F{\mathbb{F}}
\def\1{\mathbf{1•}}
\def\a{\alpha}
\def\e{\varepsilon}
\def\l{\lambda}
\def\f{\varphi}
\def\om{\omega}
\newtheorem{Theorem}{Theorem}[section]
\newtheorem{Proposition}[Theorem]{Proposition}
\newtheorem{Lemma}[Theorem]{Lemma}
\newtheorem{Corollary}[Theorem]{Corollary}
\theoremstyle{definition}
\newtheorem{Definition}[Theorem]{Definition}
\newtheorem{Example}[Theorem]{Example}
\newtheorem{Examples}[Theorem]{Examples}
\newtheorem{Remark}[Theorem]{Remark}
\newtheorem*{ackn}{Acknowledgement}
\newcommand{\db}{\overline\partial}
\newcommand{\ov}{\overline}
\newcommand{\wi}{\widetilde}
\DeclareMathOperator{\vol}{vol}
\DeclareMathOperator{\inte}{int}
\DeclareMathOperator{\dist}{dist}
\newcommand{\comment}[1]{}
\begin{document}

\title[Toric pluripotential theory]{Toric pluripotential theory}

\author{Dan Coman, Vincent Guedj, Sibel Sahin, Ahmed Zeriahi}

 \date{April 9, 2018}
 \thanks{Dan Coman  is partially supported by the NSF Grant DMS-17000111}
 \thanks{Vincent Guedj and Ahmed Zeriahi are partially supported by the ANR project GRACK}
 \thanks{ Sibel Sahin is supported by the TUBITAK 2219 postdoctoral grant}

\address{Department of Mathematics, Syracuse University, Syracuse, NY 13244-1150, USA}
\email{dcoman@syr.edu}

\address{Institut de Math\'ematiques de Toulouse  \\
Universit\'e de Toulouse, CNRS \\
UPS, 118 route de Narbonne \\
31062 Toulouse cedex 09, France}
\email{vincent.guedj@math.univ-toulouse.fr}

\address{Department of Mathematics, Mimar Sinan Fine Arts University, Istanbul, Turkey}

\email{sibel.sahin@msgsu.edu.tr}

\address{ Institut de Math\'ematiques de Toulouse  \\
Universit\'e de Toulouse, CNRS \\
UPS, 118 route de Narbonne \\
31062 Toulouse cedex 09, France}
\email{ahmed.zeriahi@math.univ-toulouse.fr}

\maketitle

\begin{center}
{\it A tribute to Professor J\'ozef SICIAK}
\end{center}

\begin{abstract}
We study finite energy classes of quasiplurisubharmonic (qpsh) functions in the setting of toric compact K\"{a}hler manifolds. We characterize toric qpsh functions and give necessary and sufficient conditions for them
to have finite (weighted) energy, both in terms of the associated convex function in $\R^n$, and through
the integrability properties of its Legendre transform. We characterize Log-Lipschitz convex functions on the Delzant polytope, showing that they correspond to toric qpsh functions which satisfy a certain exponential integrability condition. In the particular case of dimension one, those Log-Lipschitz convex functions of the polytope correspond to H\"{o}lder continuous toric quasisubharmonic functions.
\end{abstract}

\tableofcontents


\section*{Introduction}
\label{sec:intro}

A {\it toric} compact K\"ahler manifold $(X,\omega,T)$ is an equivariant compactification of the torus $T=(\C^*)^n$ equipped with a $(S^1)^n$-invariant K\"ahler metric $\omega$. Then $\omega$ can be written as
$$\omega=dd^cF_0\circ L \text{ in } (\C^*)^n,$$
where $F_0:\R^n \rightarrow \R$ is a smooth strictly convex function and
\begin{equation}\label{e:L}
L:(\C^\star)^n\to\R^n\,,\,\;L(z_1,\ldots,z_n)=(\log|z_1|,\ldots,\log|z_n|)\,.
\end{equation}

The celebrated Atiyah-Guillemin-Sternberg theorem asserts that the moment map
$\nabla F_0:\R^n \rightarrow \R^n$ sends $\R^n$ onto the interior of a compact convex polytope
$$
P=\{ \ell_i(s) \geq 0, \; 1 \leq i \leq d \} \subset \R^n,
$$
where $d \geq n+1$ is the number of $(n-1)$-dimensional faces of $P$ and
$$
\ell_i(s)=\langle s,u_i \rangle -\lambda_i,
$$
with $\l_i \in \R$ and $u_i$ a primitive element of $\Z^n$.

Delzant observed in \cite{Del88} that in this case $P$ is ``Delzant", i.e. there are exactly
$n$ faces of dimension $(n-1)$ meeting at each vertex, and the corresponding $u_i$'s form a
$\Z$-basis of $\Z^n$. He conversely showed that there is exactly one (up to symplectomorphism)
toric compact K\"ahler manifold $(X_P,\{\omega_P\},T)$ associated to a Delzant polytope $P \subset \R^n$.
Here $\{ \omega_P\}$ denotes the cohomology class of the $T$-invariant K\"ahler form  $\omega_P$.

Let
$$
G_0(s):=\sup_{x \in \R^n} (\langle x,s \rangle-F_0(x))
$$
denote the Legendre transform of $F_0$. One has that  $G_0=+\infty$ in $\R^n \setminus P$ and, for
$s \in \inte P=\nabla F_0(\R^n)$,
$$G_0(s)=\langle x,s\rangle-F_0(x)\Longleftrightarrow s\in\nabla F_0(x)\Longleftrightarrow x\in\nabla G_0(s)\,.$$

Guillemin observed in \cite{Gui94} that a ``natural" representative of the cohomology class $\{\omega_P\}$
is given by
$$
G(s)=\frac{1}{2} \,\sum_{i=1}^d \ell_i(s) \log \ell_i(s).
$$
We refer the reader to \cite{CDG02} for a neat proof of this beautiful formula of Guillemin.
Observe that $G$ is only Log-Lipschitz regular on $P$, although the original K\"ahler potential
is smooth.

The purpose of this note is to undertake a systematic study of toric pluripotential analysis.
There are three ways to understand a toric quasiplurisubharmonic (qpsh) function and its Monge-Amp\`ere measure:
\begin{itemize}
\item by working directly on $X$ and imposing toric symmetries,
\item by looking at the corresponding object (convex function, real Monge-Amp\`ere measure)  in $\R^n$
after a logarithmic transformation, and understanding the asymptotic properties at infinity,
\item by understanding the behavior near the boundary of the polytope
of the Legendre transform of the corresponding convex function.
\end{itemize}

\smallskip

We refer to Section \ref{S:toricenergy} for the definition of toric $\omega$-plurisubharmonic ($\omega$-psh) functions on $X$ and the corresponding energy classes. If $\varphi$ is $\omega$-psh, we denote by $F_\varphi$ the corresponding convex function on $\R^n$ and by $G_\varphi$ its Legendre transform (see Sections \ref{S:convex} and \ref{S:toricenergy}).

Our main results are as follows. We first describe  the class of toric $\omega$-psh functions (see Proposition \ref{P:torpsh}):

\medskip

\noindent
{\bf Proposition A.}
{\it
Let $F_P(x)=\max_{s \in P} \langle x,s \rangle$ denote the support function of the polytope $P$.
The following are equivalent:

(i) $\varphi\in PSH_{tor}(X,\omega)$;

(ii) $F_\varphi\leq F_P+C$ for some constant $C$;

(iii) $G_\varphi=+\infty$ on $\R^n\setminus P$;

(iv) $\nabla F_\varphi(\R^n)\subset P$.

}

\medskip

We then characterize finite energy toric $\omega$-psh functions and their weighted versions, showing in particular the following (see Theorem \ref{T:Etor}):

\medskip

\noindent
{\bf Theorem B.}
{\it
Let $\f \in PSH_{tor}(X,\omega)$. The following  are equivalent:

(i) $\f \in \E_{tor}(X,\omega)$;

(ii) $G_\f$ is finite on $\inte P$;

(iii) $F_\f$ has full Monge-Amp\`ere mass;

(iv) the Lelong numbers $\nu(\varphi,p)=0$ for all $p\in X$. }

\medskip

In Theorem \ref{thm:holder} we study more regular toric $\omega$-psh functions,
characterizing the maximal Log-Lipschitz regularity of Legendrian potentials:

\medskip

\noindent
{\bf Theorem C.}
{\it
Let $\f \in \E_{tor}(X,\omega)$. The following properties are equivalent:

(i) There exists $\e>0$ such that $\exp(-\e PSH_{tor}(X,\omega) ) \subset L^1(MA(\f))$;

\smallskip

(ii)  The function $G_\f$ is Log-Lipschitz on $P$.}

\medskip

It is tempting to think that these conditions are all equivalent to the fact that
$\f$ is H\"older continuous. This is easily seen to be the case when $n=1$.
We refer the interested reader to \cite{DDGHKZ} for more information, geometric motivations,
 and related questions connecting the
H\"older continuity of Monge-Amp\`ere potentials to the integrability properties of the associated
complex Monge-Amp\`ere measure.

The paper is organized as follows. In Section \ref{S:eclass} we recall some basic facts about $\omega$-psh functions on any compact K\"ahler manifold $(X,\omega)$, together with the definition and main properties of various energy classes following \cite{GZ07}. Section \ref{S:convex} deals with the relevant properties of convex functions and their Legendre transforms. In Section \ref{S:toricenergy} we study energy classes of toric $\omega$-psh functions on a toric compact K\"ahler manifold $(X,\omega)$, and in Section \ref{S:LogLip} we conclude by looking at questions about the higher regularity of such functions.

\begin{ackn}
This article has been written during the postdoctoral research period of the third named author at l'Institut de Math\'{e}matiques de Toulouse. She is grateful to her co-authors for their endless support and hospitality during the stay.
\end{ackn}

\section{Finite energy classes}\label{S:eclass}

In this section we let $(X,\omega)$ be a compact K\"ahler manifold of dimension $n$, and we recall the definition of finite energy classes of quasiplurisubharmonic (qpsh) functions following \cite{GZ07}.

 \subsection{Bedford-Taylor theory}
A function on $X$ is qpsh if it is locally the sum of a psh function and a smooth one. In particular qpsh functions are upper semicontinuous and integrable.

\begin{Definition}
A function $\f:X \rightarrow \R \cup \{-\infty\}$ is $\omega$-plurisubharmonic ($\omega$-psh) if it is qpsh and if the current $\omega+dd^c \f$ is positive on $X$.
\end{Definition}

Let $PSH(X,\omega)$ denote the set of all $\omega$-psh functions on $X$. This is a closed subset of $L^1(X,\omega^n)$.

Bedford and Taylor showed in \cite{BT82} that one can define the complex Monge-Amp\`ere operator
$$
MA(\f):=(\omega+dd^c \f)^n=(\omega+dd^c \f)\wedge\ldots\wedge(\omega+dd^c \f)
$$
for all {\it bounded} $\omega$-psh functions. They showed that whenever $(\f_j)$ is a sequence of bounded
$\omega$-psh functions decreasing locally to $\f$, the sequence of measures $MA(\f_j)$
converges weakly towards the measure $MA(\f)$. Note also that
$$\int_XMA(\f)=\int_X\omega^n=:V_\omega.$$

At the heart of Bedford-Taylor's theory lies the following {\it maximum principle}: if $u,v$ are bounded
$\omega$-psh functions, then
$$
\hskip-3cm (MP) \hskip2cm 1_{\{v<u\}} MA(\max(u,v)) =1_{\{v<u\}} MA(u).
$$

The maximum principle $(MP)$ implies the so called {\it comparison principle}:
if  $u,v$ are bounded $\omega$-psh functions then
$$
\int_{\{v<u\}}  MA(u) \leq \int_{\{v<u\}}  MA(v).
$$

\subsection{The class ${\mathcal E}(X,\omega)$}\label{SS:E}

If   $\f \in PSH(X,\omega)$, we let
$$
\f_j:=\max(\f, -j) \in PSH(X,\omega) \cap L^{\infty}(X).
$$
It follows from the Bedford-Taylor theory that the measures $MA(\f_j)$ are well defined measures of total mass $V_\omega$. The following monotonicity property holds:
$$
\mu_j:={\bf 1}_{\{ \f>-j\}} MA(\f_j)
\text{ is an increasing sequence of Borel measures.}
$$

\noindent The proof is an elementary consequence of $(MP)$ (see \cite[p.445]{GZ07}). Since $\mu_j$ have total mass bounded above by $V_\omega$, we can define
$$
\mu_{\f}:=\lim_{j \rightarrow +\infty} \mu_j,
$$
which is a positive Borel measure on $X$ of total mass $\leq V_\omega$.

\begin{Definition}
We let
$$
{\mathcal E}(X,\omega):=\left\{ \f \in PSH(X,\omega):\, \mu_{\f}(X)=V_\omega \right\}.
$$
For $\f \in {\mathcal E}(X,\omega)$, we set
$
MA(\f):=\mu_{\f}.
$
\end{Definition}

The definition is justified by the following important fact proved in \cite{GZ07}:
{\it the complex Monge-Amp\`ere operator $\f \mapsto MA(\f)$ is well defined on the class
${\mathcal E}(X,\omega)$, in the sense that if $\f \in {\mathcal E}(X,\omega)$ then for every decreasing sequence of bounded  $\omega$-psh functions $\f_j\searrow\f$, the  measures $MA(\f_j)$ converge weakly on $X$ towards $\mu_\f$.}

Every bounded $\omega$-psh function clearly belongs to ${\mathcal E}(X,\omega)$.
The class ${\mathcal E}(X,\omega)$ also contains many $\omega$-psh functions which are unbounded.
When $X$ is a compact Riemann surface, ${\mathcal E}(X,\om)$ is the set of $\om$-sh functions
whose Laplacian does not charge polar sets.

\begin{Remark}
If $\f \in PSH(X,\omega)$ is normalized so that $\f \leq -1$, then $-(-\f)^\e $ belongs to
${\mathcal E}(X,\om)$ whenever $0 \leq \e <1$ (see e.g.\ \cite{CGZ08}).
The functions which belong to the class ${\mathcal E}(X,\om)$,
although usually unbounded,
have relatively mild singularities. In particular they have zero Lelong number at every point.
\end{Remark}

It is shown in \cite{GZ07} that the maximum principle $(MP)$ and the comparison principle
continue to hold in the class ${\mathcal E}(X,\omega)$. The latter can be characterized as the largest class for which the complex Monge-Amp\`ere operator is well defined and the maximum principle holds.

\subsection{Weighted energy classes}\label{SS:Echi}

Let $\W$ denote the set of all functions $\chi:\R^- \rightarrow \R^-$ such that $\chi$ is increasing and $\chi(-\infty)=-\infty$.

\begin{Definition}
We let
${\mathcal E}_{\chi}(X,\om)$ be the set of
$\om$-psh
functions with
finite $\chi$-energy,
$$
{\mathcal E}_{\chi}(X,\om):=\left\{ \f \in {\mathcal E}(X,\om):\,\chi(-|\f|) \in L^1(X, MA(\f)) \right\}.
$$
When $\chi(t)=-(-t)^p$, $p>0$, we set $\E^p(X,\omega)=\E_\chi(X,\omega)$.
\end{Definition}

We list here a few important properties of these classes and refer the reader to \cite{GZ07,BEGZ10} for the proofs:
\begin{itemize}
\item $\E(X,\omega)=\bigcup_{\chi \in \W} \, \E_\chi(X,\omega)$;
\item $PSH(X,\om) \cap L^{\infty}(X)=\bigcap_{\chi \in {\mathcal W}} \E_\chi(X,\omega)$;
\item the classes $\E^p(X,\omega)$ are  convex;
\item $\f \in \E^p(X,\omega)$ if and only if for any (resp.\ for one)  sequence of bounded
$\omega$-psh functions $\f_j\searrow\f$,
$\sup_j \int_X |\f_j|^p MA(\f_j) <+\infty$.
\item if  $\f_j$ is a sequence of  $\omega$-psh functions decreasing to $\f \in \E^p(X,\omega)$, then
the measures $|\f_j|^p MA(\f_j)$ converge weakly to $|\f|^p MA(\f)$.
\end{itemize}

\section{Facts on convex functions}\label{S:convex}

We collect here a few properties of convex functions which will be used later. Some of these are well known and proofs are included for the convenience of the reader (see also \cite[Section 2]{BerBer13}).

\subsection{Subgradients and  Monge-Amp\`ere measures}

Let $F:{\R}^n\to\R$ be a convex function. The subgradient of $F$ at $x$ is the set
$$\nabla F(x)=\{s\in\R^n:\,F(y)\geq F(x)+\langle y-x,s\rangle,\;\forall\,y\in\R^n\}.$$
We let
$$
\nabla F(\R^n):=\bigcup_{x\in\R^n}\nabla F(x)\,.
$$

The Legendre transform $G$ of $F$ is the lower semicontinuous convex function defined by
$$G:\R^n\to(-\infty,+\infty]\,,\,\;G(s)=\sup_{x\in\R^n}(\langle x,s\rangle-F(x))\,.$$
Then $F$ is the Legendre transform of $G$,
$$F(x)=\sup_{s\in\R^n}(\langle x,s\rangle-G(s))\,,$$
and one has
$$G(s)=\langle x,s\rangle-F(x)\Longleftrightarrow s\in\nabla F(x)\Longleftrightarrow x\in\nabla G(s)\,.$$

\begin{Lemma}\label{L:conv1} Let $F:{\R}^n\to\R$ be a convex function.

(i) If $F$ is smooth and strictly convex then $\nabla F:\R^n\to\R^n$ is injective, and hence an open map.

(ii) If $s_0\in\nabla F(\R^n)$ then $G(s_0)<+\infty$. Conversely, if $G(s)<+\infty$ for all $s$ in an open ball $B(s_0,r)$ then $s_0\in\nabla F(\R^n)$.

(iii) Let $F_j:{\R}^n\to\R$, $j\geq1$, be convex functions. Then $F_j\searrow F$ pointwise on $\R^n$ if and only if the Legendre transforms $G_j\nearrow G$ pointwise on $\R^n$.
\end{Lemma}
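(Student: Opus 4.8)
The plan is to treat the three parts in turn, using throughout the convex duality recalled just above the statement.

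For (i), I would first prove injectivity of $\nabla F$ directly from strict convexity. Writing the subgradient inequalities at two points $x\neq y$, namely $F(y)>F(x)+\langle y-x,\nabla F(x)\rangle$ and $F(x)>F(y)+\langle x-y,\nabla F(y)\rangle$ (both strict by strict convexity), and adding them yields $\langle \nabla F(x)-\nabla F(y),\,x-y\rangle>0$. In particular $\nabla F(x)\neq\nabla F(y)$, so $\nabla F$ is injective. Since $F$ is smooth, $\nabla F:\R^n\to\R^n$ is a continuous injective map between manifolds of the same dimension, and hence an open map by Brouwer's invariance of domain. I would stress that one cannot simply invoke the inverse function theorem here, because strict convexity does not force the Hessian to be nondegenerate (e.g.\ $x\mapsto x^4$), so invariance of domain is the natural tool.

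For (ii), the first implication is immediate: if $s_0\in\nabla F(x_0)$, the subgradient inequality reads $\langle y,s_0\rangle-F(y)\leq\langle x_0,s_0\rangle-F(x_0)$ for all $y$, so taking the supremum gives $G(s_0)=\langle x_0,s_0\rangle-F(x_0)<+\infty$. The converse is the analytic heart of the lemma, and I would prove it by a coercivity argument. Fix $s_0$ with $G$ finite on $B(s_0,r)$. Being convex and finite on this open ball, $G$ is continuous there, hence bounded by some $M<\infty$ on the compact sphere $\{s_0+\tfrac{r}{2}e:\ |e|=1\}$. For $x\neq 0$ set $e=x/|x|$ and $s=s_0+\tfrac{r}{2}e$; since $F(x)=\sup_t(\langle x,t\rangle-G(t))\geq\langle x,s\rangle-G(s)$, we obtain
$$F(x)-\langle x,s_0\rangle\ \geq\ \tfrac{r}{2}\,|x|-M,$$
which tends to $+\infty$ as $|x|\to+\infty$. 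Thus $x\mapsto F(x)-\langle x,s_0\rangle$ is continuous and coercive, so it attains its minimum at some $x_0$; the minimality inequality $F(y)-\langle y,s_0\rangle\geq F(x_0)-\langle x_0,s_0\rangle$ for all $y$ is exactly the statement $s_0\in\nabla F(x_0)$.

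For (iii), the engine is the elementary identity $(\inf_j F_j)^*=\sup_j F_j^*$, valid for any sequence: indeed $\langle x,s\rangle-\inf_j F_j(x)=\sup_j(\langle x,s\rangle-F_j(x))$, and the supremum over $x$ commutes with the supremum over $j$. In the forward direction, if $F_j\searrow F$ then $\inf_j F_j=F$, so $\sup_j G_j=(\inf_j F_j)^*=F^*=G$; order-reversal of the Legendre transform forces $G_j$ to be increasing, whence $G_j\nearrow G$. For the converse, assume $G_j\nearrow G$; order-reversal makes $F_j=G_j^*$ decreasing, with limit $\tilde F=\inf_j F_j$. Here I would use crucially that each $F_j$ is \emph{finite-valued}: since $F$ is finite convex we have $F=G^*$ and $F\leq F_j\leq F_1$ with $F>-\infty$, so $\tilde F$ is a finite convex function on $\R^n$, hence continuous and in particular lower semicontinuous. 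The identity then gives $\tilde F^*=\sup_j G_j=G=F^*$, and since $\tilde F$ and $F$ are both proper lsc convex functions with the same Legendre transform, the Fenchel--Moreau involution yields $\tilde F=\tilde F^{**}=F^{**}=F$, i.e.\ $F_j\searrow F$.

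The step I expect to be most delicate is the converse in (ii): producing a genuine point $x_0$ in the range of $\nabla F$ out of mere finiteness of $G$ near $s_0$ requires the coercivity estimate above, rather than a purely formal duality manipulation. A secondary subtlety, easy to overlook, is the lower-semicontinuity issue in the converse of (iii): it is precisely the finiteness of the $F_j$ that rescues the involution argument, since a decreasing limit of finite convex functions stays finite and therefore continuous.
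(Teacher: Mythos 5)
Your proposal is correct, and its core coincides with the paper's proof: for part (ii) the paper runs exactly your coercivity argument (it bounds $G<M$ on a shrunken ball, notes that $\widetilde F(x):=F(x)-\langle x,s_0\rangle+M\geq\sup_{s\in B(s_0,r)}\langle x,s-s_0\rangle=r\|x\|$, and extracts a global minimizer $x_0$ with $0\in\nabla\widetilde F(x_0)=\nabla F(x_0)-s_0$), the only difference being that you take the supremum over the sphere of radius $r/2$ rather than over the full ball. The two places where you genuinely deviate are worth recording. In (i), the paper restricts $F$ to the segment through $p\neq q$, sets $f(t)=F((1-t)p+tq)$, asserts $f''(t)>0$, and deduces $f'(0)<f'(1)$; you instead add the two strict subgradient inequalities to get $\langle\nabla F(x)-\nabla F(y),x-y\rangle>0$. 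Both yield the same strict monotonicity, but your route is the more careful one: as your own example $x\mapsto x^4$ shows, smoothness plus strict convexity does not give $f''>0$ pointwise, so the paper's intermediate claim needs the standard repair that a smooth strictly convex function of one real variable still has strictly increasing derivative (harmless for the toric application, where K\"ahler potentials have positive definite Hessian). Both proofs then get openness from injectivity via invariance of domain, which the paper leaves implicit. In (iii), the paper argues by a sandwich: $G_j\nearrow\widetilde G$ with $\widetilde G$ lower semicontinuous convex and $\widetilde G\leq G$, then $F_j\geq\widetilde G^*\geq F$ forces $\widetilde G^*=F$ and hence $\widetilde G=G$, and it dismisses the converse as following ``by a similar argument''; you instead package the same duality content into the identity $(\inf_jF_j)^*=\sup_jF_j^*$ together with Fenchel--Moreau. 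The mathematical substance is the same, but your version has the merit of writing out the converse in full, including the finiteness/lower-semicontinuity point that legitimizes the biconjugation step --- precisely the detail that the paper's one-line dismissal leaves to the reader.
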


\begin{proof} $(i)$ If $p\neq q$ and $f(t):=F((1-t)p+tq)$ then $f''(t)>0$, so
$f'(0)=\langle\nabla F(p),q-p\rangle<f'(1)=\langle\nabla F(q),q-p\rangle$. Hence $\nabla F(p)\neq\nabla F(q)$.

$(ii)$ By the definition of the subgradient, if $s_0\in\nabla F(x)$ then  $\langle y,s_0\rangle-F(y)\leq \langle x,s_0\rangle-F(x)$ for all $y\in\R^n$, so $G(s_0)=\langle x,s_0\rangle-F(x)<+\infty$. Conversely, by shrinking $r$ we may assume that $G<M$ on $B(s_0,r)$ for some constant $M$, hence $\langle x,s\rangle-F(x)\leq M$ for all $x\in\R^n$ and $s\in B(s_0,r)$. Let $\wi F(x)=F(x)-\langle x,s_0\rangle+M$. It follows that $\wi F(x)\geq\langle x,s-s_0\rangle$ for all $s\in B(s_0,r)$, hence $\wi F(x)\geq r\|x\|$. Therefore $\wi F$ assumes a global minimum, i.e. there exists $x_0\in\R^n$ such that $\wi F(x)\geq\wi F(x_0)$. Thus $0\in\nabla\wi F(x_0)=\nabla F(x_0)-s_0$. (Note that if $F(x)=e^x$, $x\in\R$, then $G(0)=0$ but $0\not\in F'(\R)$, so the hypothesis that $G(s)<+\infty$ in a neighborhood of $s_0$ is needed.)

$(iii)$ Assume that $F_j\searrow F$. Then $G_j\nearrow\wi G$, where $\wi G$ is lower semicontinuous, convex and $\wi G\leq G$. If $\wi F$ is the Legendre transform of $\wi G$ we have that $F_j\geq\wi F\geq F$. We conclude that $\wi F=F$ and so $\wi G=G$. The converse follows by a similar argument.
\end{proof}

\begin{Lemma}\label{L:MAR} Let $F:{\R}^n\to\R$ be a convex function. If $\chi$ is a continuous function with compact support on $\R^n$ then
$$\int_{(\C^\star)^n}(\chi\circ L)\,(dd^c F\circ L)^n=\int_{\R^n} \chi\,MA_\R(F)\,,$$
where $L$ is defined in \eqref{e:L}, $d=\partial+\db$, $d^c=\frac{1}{2\pi i}\,(\partial-\db)$, and $MA_\R(F)$ is the real Monge-Amp\`ere measure of $F$.
\end{Lemma}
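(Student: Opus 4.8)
The plan is to prove the identity first for smooth, strictly convex $F$ by a direct computation, and then to reach the general convex case by approximation. Observe first that $F$, being a finite convex function on $\R^n$, is continuous, so $F\circ L$ is a locally bounded plurisubharmonic function on $(\C^\star)^n$ (a convex function of $\log|z_1|,\dots,\log|z_n|$ is psh). Hence $(dd^cF\circ L)^n$ is a well-defined positive measure by Bedford--Taylor theory and the left-hand side makes sense; moreover both sides are finite since $\chi$ has compact support.

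For the smooth case I would pass to logarithmic coordinates $w_j$ with $z_j=e^{w_j}$, so that $L$ becomes $w\mapsto\re w$ and $F\circ L=F(\re w)$. A short computation gives
$$
dd^c(F\circ L)=\frac{i}{4\pi}\sum_{k,l}\frac{\partial^2F}{\partial x_k\partial x_l}\,dw_l\wedge d\bar w_k ,
$$
and the standard identity for the top exterior power of a Hermitian $(1,1)$-form then yields $(dd^cF\circ L)^n=c_n\,\det(D^2F)\,dV$ for an explicit dimensional constant $c_n$, where $dV$ is Euclidean volume. Writing $z_j=e^{x_j+i\theta_j}$ and integrating out the angular variables $\theta_j\in[0,2\pi)$ collapses the left-hand side to a constant multiple of $\int_{\R^n}\chi\,\det(D^2F)\,dx$. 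Tracking this constant, and using the change of variables $s=\nabla F(x)$ (under which $\det(D^2F)\,dx$ becomes the Lebesgue measure of the gradient image, i.e.\ $MA_\R(F)$), gives the smooth case with the normalizations fixed in the paper.

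For general $F$ I would regularize by $F_\e:=F*\rho_\e+\e\|x\|^2$, which is smooth and strictly convex and converges to $F$ locally uniformly on $\R^n$ (and may be taken monotone, so that the framework of Lemma \ref{L:conv1} applies). The smooth case applies to each $F_\e$. On the complex side $F_\e\circ L\to F\circ L$ locally uniformly through locally bounded psh functions, so $(dd^cF_\e\circ L)^n\to(dd^cF\circ L)^n$ weakly by Bedford--Taylor continuity; on the real side $MA_\R(F_\e)\to MA_\R(F)$ weakly by the continuity of the real Monge--Amp\`ere operator under local uniform convergence of convex functions. Because $\chi$ has compact support, testing both weak limits against $\chi$ yields the identity for $F$.

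The step I expect to be the main obstacle is this last passage to the limit. For non-smooth $F$ the measure $MA_\R(F)$ is the Aleksandrov measure defined through the subgradient and may carry a singular part invisible to the pointwise Hessian, so the identity cannot be read off from the smooth formula alone; one must genuinely invoke the weak continuity of $MA_\R$ under local uniform convergence and match it with Bedford--Taylor continuity on the complex side, while using the compact support of $\chi$ together with local uniform convergence to rule out any escape of mass to infinity.
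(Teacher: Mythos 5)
Your proposal is correct and follows essentially the same route as the paper: an explicit computation in logarithmic/polar coordinates for smooth strictly convex $F$ (yielding the constant matching the normalization $MA_\R(F)=n!\,\det[\partial^2F/\partial x_i\partial x_j]\,dV$), followed by smooth convex approximation, using Bedford--Taylor continuity on the complex side and weak continuity of the Alexandrov measure on the real side, with the compactly supported $\chi$ making the weak limits directly comparable. The only cosmetic difference is that you regularize by mollification plus $\e\|x\|^2$ while the paper takes a decreasing sequence of smooth convex functions; both are legitimate and rest on the same continuity facts.
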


\begin{proof} Approximating $F$ by a decreasing sequence of smooth convex functions it suffices to assume that $F$ is smooth. Recall that in this case $MA_\R(F)$ is the measure defined by
$$MA_\R(F)=n!\,\det\left[\frac{\partial^2F}{\partial x_i\partial x_j}\right]dV\,,$$
where $V$ denotes the Lebesgue measure on the corresponding Euclidean space. Note that the function $F\circ L$ is psh on $(\C^\star)^n$ and
$$\frac{\partial^2 (F\circ L)}{\partial z_i \partial \overline{z_j}}=\frac{1}{4z_i\ov z_j} \left(\frac{\partial^2 F}{\partial x_i \partial x_j} \circ L\right),$$
hence
$$\det \left[ \frac{\partial^2 (F\circ L)}{\partial z_i \partial \overline{z_j}} \right]=\frac{1}{4^n\prod_j |z_j|^2} \left(\det \left[\frac{\partial^2 F}{\partial x_i \partial x_j} \right] \circ L\right).$$
It follows that
\begin{align*}
(dd^c F\circ L)^n&=\left(\frac{i}{\pi}\right)^n\big(\partial\db F\circ L\big)^n\\
&=n!\left(\frac{i}{\pi}\right)^n\det \left[ \frac{\partial^2 (F\circ L)}{\partial z_i \partial \overline{z_j}} \right]\,dz_1\wedge d\ov z_1\wedge\ldots\wedge dz_n\wedge d\ov z_n\\
&=n!\left(\frac{2}{\pi}\right)^n\det \left[ \frac{\partial^2 (F\circ L)}{\partial z_i \partial \overline{z_j}} \right]\,dV(z)\\
&=n!\left(\frac{2}{\pi}\right)^n\frac{1}{4^n\prod_j |z_j|^2} \left(\det \left[\frac{\partial^2 F}{\partial x_i \partial x_j} \right] \circ L\right)dV(z)\\
&=\frac{n!}{(2\pi)^n}\,\det\left[\frac{\partial^2 F}{\partial x_i \partial x_j}\,(\log r_1,\ldots,\log r_n) \right]\frac{dr_1\ldots dr_n}{r_1\ldots r_n}\,d\theta_1\ldots d\theta_n,
\end{align*}
where we used polar coordinates $z_j=r_je^{i\theta_j}$. Changing variables $x_j:=\log r_j$ we obtain
\begin{eqnarray*}
\lefteqn{
\int_{(\C^\star)^n}(\chi\circ L)\,(dd^c F\circ L)^n=} \\
&=n!\int_{(0,+\infty)^n}\big(\chi\det\left[\frac{\partial^2 F}{\partial x_i \partial x_j}\right]\big)(\log r_1,\ldots,\log r_n)\,\frac{dr_1\ldots dr_n}{r_1\ldots r_n}\\
&=n!\int_{\R^n}\chi(x)\det\left[\frac{\partial^2 F}{\partial x_i \partial x_j}\,(x)\right]dV(x)=\int_{\R^n}\chi\,MA_\R(F)\,.
\end{eqnarray*}

For a non-smooth convex function $F$ the positive measure $MA_\R(F)$ is the real Monge-Amp\`ere measure of $F$ (in the sense of Alexandrov \cite{Gut01}).
\end{proof}

The following lemma is proved using an idea of Al Taylor \cite{T82}.

\begin{Lemma}\label{L:AT} If $F_1,F_2:\R^n\to\R$ are convex functions such that $F_2(x)\to+\infty$ as $\|x\|\to+\infty$ and $F_1(x)\leq F_2(x)$ for all $x\in\R^n$, then
$$\int_{\R^n}MA_\R(F_1)\leq\int_{\R^n}MA_\R(F_2)\,.$$
\end{Lemma}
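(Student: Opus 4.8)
The plan is to turn the inequality into a comparison of the Lebesgue measures of the images of the two subgradient maps, and then to compare those images via the Legendre transform. The first step is the identity
\[
\int_{\R^n}MA_\R(F)=n!\,\bigl|\nabla F(\R^n)\bigr|,
\]
where $|\cdot|$ is Lebesgue measure and $\nabla F(\R^n)$ is the image of the subgradient map. When $F$ is smooth and strictly convex this is the change of variables $s=\nabla F(x)$ in the formula of Lemma \ref{L:MAR}, the Jacobian of $\nabla F$ being exactly $\det[\partial^2F/\partial x_i\partial x_j]$; for general convex $F$ it holds because, as recalled after Lemma \ref{L:MAR}, $MA_\R(F)$ is the Alexandrov measure, whose total mass is by construction $n!$ times the measure of the subgradient image. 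Hence it is enough to prove $|\nabla F_1(\R^n)|\le|\nabla F_2(\R^n)|$.

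The idea of Al Taylor enters in identifying this image, up to a null set, with the region where the Legendre transform is finite. Indeed a slope $s$ lies in $\nabla F(\R^n)$ precisely when $F$ has a supporting affine function of slope $s$, i.e.\ when $G(s)<+\infty$; more precisely Lemma \ref{L:conv1}(ii) gives
\[
\inte\{G<+\infty\}\subseteq\nabla F(\R^n)\subseteq\{G<+\infty\}.
\]
Since $\{G<+\infty\}$ is convex, its boundary is Lebesgue-null, so $|\nabla F(\R^n)|=|\{G<+\infty\}|$. Now if $F_1\le F_2$ then taking suprema in the definition of the Legendre transform gives $G_1\ge G_2$, whence $\{G_1<+\infty\}\subseteq\{G_2<+\infty\}$ and therefore $|\nabla F_1(\R^n)|\le|\nabla F_2(\R^n)|$. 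This inclusion is exactly Taylor's observation that any affine minorant of $F_1$ of a given slope is automatically an affine minorant of $F_2$ of the same slope.

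The step needing real care --- and the main obstacle --- is the mass identity for non-smooth $F$ together with the guarantee that no mass is lost. I would make it rigorous by mollifying, $F_i^{(k)}=F_i*\rho_k\searrow F_i$, which preserves both convexity and the order $F_1^{(k)}\le F_2^{(k)}$, applying the smooth case and Lemma \ref{L:conv1}(iii) to get $\int MA_\R(F_1^{(k)})\le\int MA_\R(F_2^{(k)})$, and then letting $k\to\infty$. The delicate point is the passage to the limit on the right-hand side: weak convergence of $MA_\R(F_2^{(k)})$ on the noncompact space $\R^n$ only controls mass on compact sets, so one must rule out mass of $F_2^{(k)}$ escaping to infinity. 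This is exactly where the hypothesis $F_2(x)\to+\infty$ is used: coercivity is equivalent to $0\in\inte\{G_2<+\infty\}$ and forces the slopes of $F_2$ to fill out an honest (possibly bounded) convex body, so that the domains $\{G_2^{(k)}<+\infty\}$ decrease to $\{G_2<+\infty\}$ without leaking measure and the right-hand sides converge to $\int MA_\R(F_2)$. With that in hand the inequality follows, using lower semicontinuity of the total mass for $F_1$.
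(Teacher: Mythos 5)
Your first two paragraphs already constitute a correct and complete proof, but it is a genuinely different one from the paper's. The paper stays on the complex side and uses Taylor's gluing trick: for a compact $K\subset(\C^\star)^n$ and $\varepsilon>0$ it forms the psh function $u=\max\{F_1\circ L,(1+\varepsilon)F_2\circ L-C\}$, which equals $F_1\circ L$ near $K$ and equals $(1+\varepsilon)F_2\circ L-C$ outside a compact set --- this is precisely where the hypothesis $F_2(x)\to+\infty$ is used --- so that $\int_K(dd^cF_1\circ L)^n\leq\int_{(\C^\star)^n}(dd^cu)^n=(1+\varepsilon)^n\int_{(\C^\star)^n}(dd^cF_2\circ L)^n$, and then lets $K\nearrow(\C^\star)^n$, $\varepsilon\searrow0$, invoking Lemma \ref{L:MAR}. (So the idea the paper takes from Taylor is this max construction; the Legendre-domain comparison you attribute to him is not what the paper uses.) Your route is purely real-convex-analytic: the total mass of $MA_\R(F)$ is $n!$ times the Lebesgue measure of $\nabla F(\R^n)$ --- immediate from the Alexandrov definition that the paper adopts at the end of the proof of Lemma \ref{L:MAR} --- and this image coincides up to a null set with the convex set $\{G<+\infty\}$ by the sandwich from Lemma \ref{L:conv1}(ii), so monotonicity of Legendre domains under $F_1\leq F_2$ finishes the proof. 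What your approach buys is worth recording: it avoids complex Monge--Amp\`ere theory entirely, and it never uses coercivity of $F_2$, so it proves a strictly stronger statement than the lemma; the paper's gluing argument cannot dispense with that hypothesis.

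One caution about your third paragraph: it is the weakest part of the write-up, and fortunately it is unnecessary. Given that the paper defines $MA_\R(F)$ for non-smooth $F$ as the Alexandrov measure, the mass identity needs no mollification, and your worry about ``mass being lost'' does not arise. If you do insist on the mollification route, the key claim there --- that coercivity forces the domains $\{G_2^{(k)}<+\infty\}$ to shrink to $\{G_2<+\infty\}$ ``without leaking measure'' --- is asserted rather than proved, and coercivity alone does not yield it. The correct (and coercivity-free) argument is that $\nabla(F_2*\rho_k)(x)$ is an average of values of $\nabla F_2$, hence lies in the closed convex set $\overline{\{G_2<+\infty\}}$; therefore $|\nabla F_2^{(k)}(\R^n)|\leq|\{G_2<+\infty\}|$ for every $k$, the boundary being null, and no mass can escape to infinity. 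With that substitution your fallback argument also closes, but the direct argument of your first two paragraphs is the one to keep.
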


\begin{proof} Fix a compact $K\subset(\C^\star)^n$, a number $\varepsilon>0$, and consider the psh function on $(\C^\star)^n$,
$$u:=\max\{F_1\circ L,(1+\varepsilon)F_2\circ L-C\}\,,$$
where the constant $C>0$ is chosen such that $u=F_1\circ L$ in a neighborhood of $K$. Since $F_2(x)\to+\infty$ as $\|x\|\to+\infty$ it follows that $F_1\leq F_2\leq(1+\varepsilon)F_2-C$ on $\R^n\setminus\mathcal K$ for some compact $\mathcal K\subset\R^n$. Then $L^{-1}(\mathcal K)\subset(\C^\star)^n$ is compact and $u=(1+\varepsilon)F_2\circ L-C$ on $(\C^\star)^n\setminus L^{-1}(\mathcal K)$. We infer that
$$(1+\varepsilon)^n\int_{(\C^\star)^n}(dd^cF_2\circ L)^n=\int_{(\C^\star)^n}(dd^cu)^n\geq\int_K(dd^cF_1\circ L)^n\,.$$
The lemma follows by using Lemma \ref{L:MAR} and by letting $K\nearrow(\C^\star)^n$ and $\varepsilon\searrow0$.
\end{proof}

\subsection{Growth properties}

Let $P$ be a (compact) convex body in $\R^{n}$. Its {\em support function}, which is also known as the \emph{indicator function}, is the convex function
$$F_P(x):=\max_{s \in P} \langle x,s \rangle.$$
Its Legendre transform is the convex function
$$
G_P(x)=\left\{\begin{array}{ll}0, \,\text{ \ \ \ \ if } x\in P,\\+\infty, \text{ if } x\not\in P.\end{array}\right.
$$

If $P_\vartheta=\theta+P$ is the image of $P$ under the translation by $\vartheta$, and $P_\lambda=\lambda P$ is the image of $P$ under the dilation by $\lambda >0$, then
\begin{align*}
&F_{P_\vartheta}(x)=F_P(x)+<\vartheta,x> \,,\,\;G_{P_\vartheta}(s)=G_P(s-\vartheta),\\
&F_{P_\lambda}(x)=F_P(\lambda x)=\lambda F_P(x)\,,\,\;G_{P_\lambda}(s)=G_P\big(\frac{s}{\lambda}\big).
\end{align*}

\begin{Lemma}\label{L:conv2}
Let $F:{\R}^n\to\R$ be a convex function with Legendre transform $G$. The following are equivalent:

(i) $F\leq F_P+C$ for some constant $C$;

(ii) $G=+\infty$ on $\R^n\setminus P$;

(iii) $\nabla F(\R^n)\subset P$.
\end{Lemma}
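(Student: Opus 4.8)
The goal is to prove the equivalence of three conditions relating a convex function $F:\R^n\to\R$ to a compact convex body $P$: a growth bound $F\le F_P+C$, the vanishing of the Legendre transform's domain outside $P$ (i.e. $G=+\infty$ on $\R^n\setminus P$), and the containment of the subgradient image $\nabla F(\R^n)\subset P$. Let me think about how the pieces fit together.

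The cleanest strategy is to prove a cycle of implications, using the machinery already established in Lemma \ref{L:conv1} and the explicit formula for $G_P$ computed just before the statement. The plan is to prove $(i)\Rightarrow(ii)\Rightarrow(iii)\Rightarrow(i)$, though depending on which direction is easier some implications might be packaged as two-sided equivalences directly.

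Let me sketch each implication.

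$(i)\Rightarrow(ii)$: Suppose $F\le F_P+C$. The Legendre transform is order-reversing: if $F\le H$ then $G=F^*\ge H^*$. Now the Legendre transform of $F_P+C$ is $G_P(\cdot)-C$ (adding a constant to a function subtracts it from the Legendre transform), and $G_P=+\infty$ off $P$. So $G\ge G_P-C=+\infty$ on $\R^n\setminus P$, giving $(ii)$. This is clean and uses only the order-reversing property of Legendre duality together with the computation of $G_P$ recalled in the text.

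$(ii)\Rightarrow(iii)$: Suppose $G=+\infty$ on $\R^n\setminus P$. I want $\nabla F(\R^n)\subset P$. By Lemma \ref{L:conv1}(ii), if $s_0\in\nabla F(x)$ for some $x$, then $G(s_0)<+\infty$. Since $G=+\infty$ outside $P$, we must have $s_0\in P$. So every element of $\nabla F(\R^n)$ lies in $P$. This is immediate from part (ii) of the earlier lemma.

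$(iii)\Rightarrow(i)$: Suppose $\nabla F(\R^n)\subset P$. I want the growth bound $F\le F_P+C$. This is the implication I expect to be the main obstacle, because it requires reconstructing a global bound on $F$ from purely infinitesimal (subgradient) data. The natural approach: fix a base point, say $0$, and for any $x$ use the subgradient inequality along the segment from $0$ to $x$. More precisely, convexity gives $F(x)=F(0)+\int_0^1\langle\nabla F(tx),x\rangle\,dt$ (using that $F$ is differentiable a.e. and absolutely continuous on lines, with the subgradient coinciding with the gradient a.e.). Since $\nabla F(tx)\in P$ for a.e. $t$, each integrand is bounded by $\max_{s\in P}\langle s,x\rangle=F_P(x)$. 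Hence $F(x)\le F(0)+F_P(x)$, which is exactly $(i)$ with $C=F(0)$.

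The subtlety in the last step is handling the a.e.-differentiability carefully and justifying that the subgradient values $\nabla F(tx)$ indeed lie in $P$ along the line; since $\nabla F(\R^n)\subset P$ by hypothesis, this is fine, but one must argue via the fundamental theorem of calculus for the convex (hence locally Lipschitz) function $t\mapsto F(tx)$. Alternatively, one can bypass integration entirely: the recipe $G=F^*$ and the biconjugate formula $F=G^*$ let one write $F(x)=\sup_{s}(\langle x,s\rangle-G(s))$; if one can show $G=+\infty$ off $P$ from $(iii)$ first, the supremum is effectively over $s\in P$, giving $F(x)\le\sup_{s\in P}\langle x,s\rangle+\sup_{s\in P}(-G(s))=F_P(x)+C$. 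This suggests it may actually be more economical to prove $(iii)\Rightarrow(ii)$ and close the cycle through the dual characterization, which I would check against the structure the authors prefer.

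Overall I anticipate the forward direction of each implication is routine Legendre-duality bookkeeping, and the single genuine point requiring care is recovering the additive constant $C$ and the global growth bound in the step that reconstructs $F$ from subgradient containment; the envelope/biconjugate formulation seems the most robust way to dispatch it.
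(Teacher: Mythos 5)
Your proposal is correct and follows essentially the same route as the paper: the cycle $(i)\Rightarrow(ii)\Rightarrow(iii)\Rightarrow(i)$, with $(i)\Rightarrow(ii)$ by order-reversal of the Legendre transform and the formula for $G_P$, $(ii)\Rightarrow(iii)$ by Lemma \ref{L:conv1}(ii), and $(iii)\Rightarrow(i)$ by integrating $\langle\nabla F(tx),x\rangle\leq F_P(x)$ along the segment from $0$ to $x$ (using that $F$ is locally Lipschitz on lines), giving $C=F(0)$. The biconjugate alternative you float at the end is not needed; your main argument is exactly the paper's.
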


\begin{proof}
To show that $(i)\Rightarrow(ii)$, if $F\leq F_P+C$ then $G\geq G_P-C$, so $G=G_P=+\infty$ on $\R^n\setminus P$. For $(ii)\Rightarrow(iii)$, if $s\in\nabla F(\R^n)$ then $G(s)<+\infty$, hence $s\in P$ by $(ii)$.

To prove that $(iii)\Rightarrow(i)$, let $x\in\R^n$ and note that if $s\in\nabla F(\R^n)$ then $s\in P$, so $\langle s,x\rangle\leq F_P(x)$. Since $F$ is locally Lipschitz along the line $t\in\R\to tx$ we have
$$
F(x)-F(0)=\int_0^1\frac{d}{dt}\,F(tx)\,dt=\int_{0}^{1}\langle\nabla F(tx),x\rangle\,dt\leq\int_{0}^{1}F_P(x)\,dt=F_P(x).
$$
 \end{proof}

\begin{Lemma}\label{L:conv3}
Let $F_0:{\R}^n\to\R$ be a smooth strictly convex function such that $F_P-C\leq F_0\leq F_P+C$ for some constant $C$. Then $\nabla F_0:\R^n\to\inte P$ is bijective and $\nabla G_0:\inte P\to \R^n$ is its inverse, where $G_0$ is the Legendre transform of $F_0$. Moreover, if $\chi$ is a continuous function with compact support on $\R^n$ then
$$\int_{\R^n} \chi\,MA_\R(F_0)=n!\int_{\inte P}\chi\circ\nabla G_0\;dV\,,\;\text{ and }\,\int_{\R^n}\,MA_\R(F_0)=n!\vol(P)\,.$$
\end{Lemma}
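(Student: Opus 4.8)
The plan is to establish first the bijectivity of $\nabla F_0$, then to identify its inverse via Legendre duality, and finally to obtain the two integral identities by the classical change of variables together with a monotone limit.

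First I would prove $\nabla F_0(\R^n)=\inte P$. Injectivity of $\nabla F_0$ is immediate from Lemma \ref{L:conv1}(i). Since $F_0\le F_P+C$, Lemma \ref{L:conv2} gives $\nabla F_0(\R^n)\subset P$; as $F_0$ is smooth with positive definite Hessian, $\nabla F_0$ is a local diffeomorphism, so its image is open and hence contained in $\inte P$. For the reverse inclusion I would use the lower bound: $F_0\ge F_P-C$ yields $G_0\le G_P+C$, and since $G_P=0$ on $P$ this means $G_0\le C<+\infty$ on $P$. For $s_0\in\inte P$ the function $G_0$ is thus finite on a whole ball around $s_0$, so Lemma \ref{L:conv1}(ii) gives $s_0\in\nabla F_0(\R^n)$. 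Hence $\nabla F_0:\R^n\to\inte P$ is a bijection, in fact, by the inverse function theorem applied to the everywhere invertible $\Hess F_0$, a smooth diffeomorphism onto the open set $\inte P$.

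Next I would identify the inverse. The Legendre duality $s\in\nabla F_0(x)\Leftrightarrow x\in\nabla G_0(s)$ recorded before Lemma \ref{L:conv1} shows that $\nabla G_0$ and $\nabla F_0$ are mutually inverse as set-valued maps. Since $F_0\le F_P+C$ forces $G_0=+\infty$ off $P$ while $G_0\le C$ on $P$, the effective domain $\Dom G_0$ is exactly $P$, and on $\inte P$ the convex function $G_0$ is differentiable with $\nabla G_0=(\nabla F_0)^{-1}$; in particular $\nabla G_0:\inte P\to\R^n$ is single-valued and is the inverse diffeomorphism. For the integral identity I would then apply the change of variables $s=\nabla F_0(x)$, whose Jacobian is exactly $\det\Hess F_0(x)$. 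Using the formula $MA_\R(F_0)=n!\,\det[\partial^2 F_0/\partial x_i\partial x_j]\,dV$ from Lemma \ref{L:MAR} and writing $x=\nabla G_0(s)$, the standard change of variables formula gives
$$
\int_{\R^n}\chi\,MA_\R(F_0)=n!\int_{\R^n}\chi(x)\,\det\Hess F_0(x)\,dV(x)=n!\int_{\inte P}\chi\circ\nabla G_0\;dV.
$$
Finally, to get the total mass I would take a sequence of nonnegative $\chi_k\nearrow1$ with compact support and apply the monotone convergence theorem to both sides; the right-hand side tends to $n!\,\vol(\inte P)=n!\,\vol(P)$ since $\partial P$ is Lebesgue-null, giving $\int_{\R^n}MA_\R(F_0)=n!\,\vol(P)$.

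The step I expect to require the most care is the surjectivity $\inte P\subset\nabla F_0(\R^n)$ together with the regularity of $G_0$ needed to treat $\nabla G_0$ as a genuine diffeomorphism: one must combine both growth bounds (the upper bound to confine the image to $P$ and to pin down $\Dom G_0$, the lower bound to reach every interior point) and invoke the standard fact that the Legendre transform of a $C^2$ strictly convex function is itself $C^2$ on the interior of its domain, so that the change of variables is legitimate.
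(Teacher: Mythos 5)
Your overall route is the same as the paper's: injectivity from Lemma \ref{L:conv1}(i), image inside $P$ from Lemma \ref{L:conv2}, surjectivity onto $\inte P$ from the lower bound $F_0\geq F_P-C$ (giving $G_0\leq G_P+C=C$ on $P$) together with Lemma \ref{L:conv1}(ii), Legendre duality to identify $\nabla G_0=(\nabla F_0)^{-1}$, and finally the change of variables plus a monotone limit. However, there is a genuine flaw in how you justify two of these steps: you repeatedly assume that $\Hess F_0$ is positive definite everywhere (``$\nabla F_0$ is a local diffeomorphism'', ``the inverse function theorem applied to the everywhere invertible $\Hess F_0$'', ``the Legendre transform of a $C^2$ strictly convex function is itself $C^2$''). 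None of this follows from the hypothesis that $F_0$ is smooth and strictly convex. Strict convexity allows the Hessian to degenerate at points: in dimension one take $F_0''(x)=\frac{4}{\pi}\,\frac{x^2}{(1+x^2)^2}$, so that $F_0'$ is strictly increasing with $F_0'(\pm\infty)=\pm1$; then $F_0$ is smooth, strictly convex, satisfies $|F_0-F_P|\leq C$ for $P=[-1,1]$, yet $F_0''(0)=0$, $\nabla F_0$ is not a local diffeomorphism at $0$, and $G_0$ is $C^1$ but not $C^2$ at $s=0$. This matters beyond pedantry: the lemma is later applied (e.g.\ in Proposition \ref{P:Echitor}) to smooth strictly convex approximants $F_\f$ of $\omega$-psh functions, for which positive definiteness of the Hessian is not guaranteed.

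The gap is repairable, and the paper's proof shows how to avoid it. Openness of $\nabla F_0$ is already part of Lemma \ref{L:conv1}(i): it follows from injectivity by invariance of domain, not from nondegeneracy of the Hessian. Differentiability of $G_0$ on $\inte P$ follows from single-valuedness of the subgradient: if $x,x'\in\nabla G_0(s)$ then $\nabla F_0(x)=\nabla F_0(x')=s$, hence $x=x'$; no $C^2$ regularity is needed or claimed. Finally, the change of variables $s=\nabla F_0(x)$ is legitimate for an injective $C^1$ map even where its Jacobian vanishes (this is the area formula for injective $C^1$ maps, rather than the diffeomorphism version you invoke), and it yields
$$\int_{\inte P}\chi\circ\nabla G_0\,dV=\int_{\R^n}\chi(x)\det\left[\frac{\partial^2F_0}{\partial x_i\partial x_j}\,(x)\right]dV(x)=\frac{1}{n!}\int_{\R^n}\chi\,MA_\R(F_0)\,,$$
after which your monotone convergence argument for the total mass (using that $\partial P$ is Lebesgue-null) goes through unchanged.
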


\begin{proof}
By Lemma \ref{L:conv2} and Lemma \ref{L:conv1} $(i)$, $\nabla F_0:\R^n\to P$ is injective. As $F_P-C\leq F_0$ we have that $G_0\leq G_P+C$, so $G_0\leq C$ on $P$. Thus $\inte P\subset\nabla F_0(\R^n)$ by Lemma \ref{L:conv1} $(ii)$, and hence $\nabla F_0(\R^n)=\inte P$ since $\nabla F_0$ is open. If $x,x'\in\nabla G_0(s)$ then $s=\nabla F_0(x)=\nabla F_0(x')$, so $x=x'$. Hence $G_0$ is differentiable on $\inte P$ and $\nabla G_0=(\nabla F_0)^{-1}$. The remaining assertions of the lemma follow by the change of variables $x=\nabla G_0(s)$, $s=\nabla F_0(x)$, so
$$
dV(s)=\det\left[\frac{\partial^2F_0}{\partial x_i\partial x_j}\right]dV(x)=\frac{1}{n!}\,MA_\R(F_0)(x)\,.
$$
\end{proof}

\begin{Lemma}\label{L:conv4}
If $0\in\inte P$ then there exist constants $a,b >0$ such that
$$b\|x\| \leq F_P(x)  \leq a\|x\|,\;\forall\,x\in\R^n.$$
\end{Lemma}

\begin{proof}
If $a,b>0$ are such that the closed balls $\ov B(0,b)\subset P \subset\ov B(0,a)$, then
$$
b\|x\|=F_{\ov B(0,b)} \leq F_P(x) \leq F_{\ov B(0,a)}=a\|x\|.
$$
\end{proof}

\begin{Lemma}\label{L:conv5}
Assume that $0\in\inte P$ and let $F:{\R}^n\to\R$ be a convex function with Legendre transform $G$, such that $F\leq F_P+C$ for some constant $C$. The following are equivalent:

(i) $G(s)<+\infty$ for all $s\in\inte P$;

(ii) for every $\varepsilon\in(0,1)$ there is $M_\varepsilon>0$ s.t. $F\geq(1-\varepsilon)F_P-M_\varepsilon$ on $\R^n$.

\noindent Moreover, these conditions imply that $\int_{\R^n}\,MA_\R(F)=n!\vol(P)$.
\end{Lemma}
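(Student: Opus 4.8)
The plan is to reduce both the equivalence and the mass statement to the Legendre duality between $F$ and $G$, combined with the dilation identities $(1-\varepsilon)F_P=F_{(1-\varepsilon)P}$ and $G_{(1-\varepsilon)P}(s)=G_P(s/(1-\varepsilon))$ recorded just above, and to settle the mass with the monotonicity Lemma \ref{L:AT}. The key dictionary is this: for $\varepsilon\in(0,1)$ and a constant $M$, the bound $F\geq(1-\varepsilon)F_P-M$ on $\R^n$ is equivalent, after taking Legendre transforms (which reverse order and are involutive on lower semicontinuous convex functions, so that $G^*=F$), to $G\leq M$ on $(1-\varepsilon)P$. Indeed $F_{(1-\varepsilon)P}$ has Legendre transform $G_{(1-\varepsilon)P}$, which vanishes on $(1-\varepsilon)P$ and equals $+\infty$ elsewhere, so $((1-\varepsilon)F_P-M)^*=G_{(1-\varepsilon)P}+M$. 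I will invoke this dictionary in both directions.

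For $(i)\Rightarrow(ii)$, fix $\varepsilon\in(0,1)$. Since $0\in\inte P$, the dilate $(1-\varepsilon)P$ is a compact subset of $\inte P$, and by $(i)$ the convex function $G$ is finite on the open convex set $\inte P$, hence continuous there; so $M_\varepsilon:=\sup_{(1-\varepsilon)P}G<+\infty$. The dictionary then gives $F\geq(1-\varepsilon)F_P-M_\varepsilon$, which is $(ii)$. For $(ii)\Rightarrow(i)$, fix $s_0\in\inte P$; the segment from $0\in\inte P$ through $s_0$ reaches $\partial P$ at some $t^\ast s_0$ with $t^\ast>1$, so $s_0\in(1-\varepsilon)P$ with $1-\varepsilon=1/t^\ast$. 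Applying $(ii)$ with this $\varepsilon$ and reading the dictionary backwards gives $G\leq M_\varepsilon$ on $(1-\varepsilon)P$, hence $G(s_0)\leq M_\varepsilon<+\infty$; as $s_0\in\inte P$ is arbitrary, this is $(i)$.

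For the mass assertion I will use the two-sided bound $(1-\varepsilon)F_P-M_\varepsilon\leq F\leq F_P+C$, which holds by $(ii)$ together with the hypothesis. By Lemma \ref{L:conv4} one has $F_P(x)\geq b\|x\|\to+\infty$, so both $F$ and $F_P+C$ are coercive and Lemma \ref{L:AT} applies to each of the two comparisons. The support function of a convex body $Q$ has subgradient image exactly $Q$, so its total Alexandrov mass is $\int_{\R^n}MA_\R(F_Q)=n!\vol(Q)$; applying this to $Q=P$ and $Q=(1-\varepsilon)P$ and noting that adding a constant does not change $MA_\R$, I obtain
$$n!(1-\varepsilon)^n\vol(P)=\int_{\R^n}MA_\R\big((1-\varepsilon)F_P-M_\varepsilon\big)\leq\int_{\R^n}MA_\R(F)\leq\int_{\R^n}MA_\R(F_P+C)=n!\vol(P).$$
Letting $\varepsilon\searrow0$ squeezes the middle term to $n!\vol(P)$.

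The Legendre bookkeeping and the exhaustion $(1-\varepsilon)P\nearrow\inte P$ are routine; the point deserving care is the \emph{lower} bound on the total mass, i.e. ruling out that the Monge-Amp\`ere mass of $F$ leaks to infinity. This is precisely where condition $(ii)$ (equivalently $(i)$) is indispensable: without the coercive minorant $(1-\varepsilon)F_P-M_\varepsilon$ one cannot feed $F$ into Lemma \ref{L:AT} as the dominating function, and indeed the mass would in general fall strictly below $n!\vol(P)$. I also rely on the elementary identification, valid for the non-smooth piecewise-linear support function $F_Q$, of its total Alexandrov mass with $n!$ times the Lebesgue measure of the subgradient image $\nabla F_Q(\R^n)=Q$.
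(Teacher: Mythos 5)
Your proof is correct and follows essentially the same route as the paper: both directions of the equivalence come from comparing $F$ with $F_{(1-\varepsilon)P}=(1-\varepsilon)F_P$ via Legendre duality on the dilated polytopes, and the mass identity comes from squeezing $\int_{\R^n}MA_\R(F)$ between $(1-\varepsilon)^n\,n!\vol(P)$ and $n!\vol(P)$ using Lemma \ref{L:AT} and letting $\varepsilon\searrow0$. The only (harmless) divergence is in one sub-step: you evaluate $\int_{\R^n}MA_\R(F_P)=n!\vol(P)$ directly from the Alexandrov definition via the subgradient image $\nabla F_P(\R^n)=P$, whereas the paper obtains the same value by sandwiching $F_P$ against the smooth potential $F_0$ of Lemma \ref{L:conv3} and applying Lemma \ref{L:AT} in both directions.
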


\begin{proof} Note that
$$F_{(1-\varepsilon)P}(x)=\sup_{s\in P}\langle x,(1-\varepsilon)s\rangle=(1-\varepsilon)F_P(x)\,.$$

Assume that $G(s)<+\infty$ for all $s\in\inte P$. Since $0\in\inte P$, $(1-\varepsilon)P\subset\inte P$ for $\varepsilon\in(0,1)$, so there exists $M_\varepsilon>0$ such that $G\leq M_\varepsilon$ on $(1-\varepsilon)P$. It follows that
$$F(x)\geq\sup_{s\in(1-\varepsilon)P}\big(\langle x,s\rangle-G(s)\big)\geq F_{(1-\varepsilon)P}(x)-M_\varepsilon =(1-\varepsilon)F_P(x)-M_\varepsilon\,.$$
Conversely, if $F\geq(1-\varepsilon)F_P-M_\varepsilon=F_{(1-\varepsilon)P}-M_\varepsilon$, then $G\leq G_{(1-\varepsilon)P}+M_\varepsilon$, so $G(s)\leq M_\varepsilon$ for $s\in(1-\varepsilon)P$. As $\varepsilon\searrow 0$ this implies that $G(s)<+\infty$ for all $s\in\inte P$.

By Lemma \ref{L:conv4} we have that $F_P(x)\to+\infty$ as $\|x\|\to+\infty$. Since $F\leq F_P+C$, Lemmas \ref{L:AT} and \ref{L:conv3} imply that
$$\int_{\R^n}\,MA_\R(F)\leq\int_{\R^n}\,MA_\R(F_P)=\int_{\R^n}\,MA_\R(F_0)=n!\vol(P)\,,$$
where $F_0$ is a function as in Lemma \ref{L:conv3}. Note that by $(ii)$, $F(x)\to+\infty$ as $\|x\|\to+\infty$, hence Lemma \ref{L:AT} again shows that
$$\int_{\R^n}\,MA_\R(F)\geq(1-\varepsilon)^n\int_{\R^n}\,MA_\R(F_P)\,,\,\;\forall\,\varepsilon\in(0,1)\,.$$
Letting $\varepsilon\to0$ finishes the proof.
\end{proof}

We conclude this section with the following lemma:

\begin{Lemma}\label{L:conv6} Let $P$ be a compact convex body in $\R^n$ with nonempty interior and $G:P\to\R\cup\{+\infty\}$ be a lower semicontinuous convex function. Then
$$\big|\inf_PG\big|\leq\frac{1}{\big(2^{1/(n+1)}-1\big)\vol(P)}\,\int_P|G|\,dV\,.$$
\end{Lemma}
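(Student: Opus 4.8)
The plan is to reduce the inequality to a one–dimensional estimate for the distribution function of $G$, and then to an elementary single–variable minimisation that produces exactly the constant $2^{1/(n+1)}-1$. First I dispose of the trivial case: if $m:=\inf_P G\ge 0$ then $G\ge 0$ on $P$, so $\int_P|G|\,dV\ge m\,\vol(P)=|m|\,\vol(P)$, and since $2^{1/(n+1)}-1\le 1$ this already gives the claim. So I assume $m<0$ and set $\beta:=-m>0$. If $\{G=+\infty\}$ has positive measure the right–hand side is infinite and there is nothing to prove, hence I may assume $G<+\infty$ a.e. Fix $x_0\in P$ with $G(x_0)=m$ and put $H:=G-m\ge 0$, a convex function with $H(x_0)=0$; let $V(r):=\vol\{x\in P:\,H(x)\le r\}$, which is nondecreasing with $V(r)\to\vol(P)=:W$ as $r\to\infty$.

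The first key step is a homothety monotonicity: the function $r\mapsto V(r)/r^n$ is non-increasing on $(0,\infty)$. Indeed, for $0<r\le R$ convexity of $H$ together with $H(x_0)=0$ yields $x_0+\tfrac rR(\{H\le R\}-x_0)\subset\{H\le r\}$, and taking volumes gives $V(r)\ge (r/R)^nV(R)$. Consequently $V(r)\ge (r/\rho)^nV(\rho)$ for $r\le\rho$ and $V(r)\le (r/\rho)^nV(\rho)$ for $r\ge\rho$. The second step is the layer–cake identity
\[
\int_P|G|\,dV=\int_0^\beta V(r)\,dr+\int_\beta^\infty\bigl(W-V(r)\bigr)\,dr,
\]
obtained by writing $|G|=|H-\beta|=(\beta-H)_++(H-\beta)_+$ and integrating each summand over its super-level sets.

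Now I insert the monotonicity into this identity with $\rho=\beta$. Writing $v:=V(\beta)\in(0,W]$ (positivity holds because $G<0$ on a set of positive measure near $x_0$, by convexity along segments from $x_0$ into $\inte P$) and $\kappa:=(W/v)^{1/n}\ge 1$, the lower bound $V(r)\ge (r/\beta)^nv$ for $r\le\beta$ gives $\int_0^\beta V\ge v\beta/(n+1)$, while the upper bound $V(r)\le\min\{W,(r/\beta)^nv\}$ for $r\ge\beta$ gives $\int_\beta^\infty(W-V)\ge\int_\beta^{\kappa\beta}\bigl(W-(r/\beta)^nv\bigr)\,dr$. A direct computation then yields $\int_P|G|\,dV\ge W\beta\,h(\kappa)$, where
\[
h(\kappa)=\frac{n\kappa}{n+1}-1+\frac{2}{(n+1)\kappa^{n}}.
\]
Since this holds for the particular $\kappa$ attached to $G$, it suffices to minimise $h$ over $\kappa\ge 1$: from $h'(\kappa)=\tfrac{n}{n+1}\bigl(1-2\kappa^{-(n+1)}\bigr)$ the minimum is at $\kappa=2^{1/(n+1)}$ and equals $2^{1/(n+1)}-1$. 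This gives $\int_P|G|\,dV\ge\bigl(2^{1/(n+1)}-1\bigr)\,\vol(P)\,|m|$, which is the desired inequality (and the computation of the extremal affine $G$ on a cone shows the constant is sharp).

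The routine parts are the layer–cake identity and the closing calculus minimisation. The step requiring the most care — and the heart of the matter — is the homothety monotonicity of $V(r)/r^n$, together with the edge cases that make $V$ legitimate: $x_0\in\partial P$ (the homothety still lands in $P$ by convexity of $P$), a minimum set $\{G=m\}$ of positive volume (allowed, since the bound $V(r)\ge(r/\beta)^nv$ remains valid), and $G$ unbounded but finite a.e. (so that $V(r)\to W$). Once these are secured, the two one–sided bounds on $V(r)$ hold for all $r$ and the argument closes.
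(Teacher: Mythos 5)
Your proof is correct, and it takes a genuinely different route from the paper's. The paper fixes a point $p\in\inte S$, $S=\{G<0\}$, passes to spherical coordinates centered at $p$, and uses convexity to sandwich $G$ along each ray between affine functions through $(0,G(p))$ and $(a(\theta),0)$, where $a(\theta)\theta\in\partial S$ and $b(\theta)\theta\in\partial P$; integrating ray by ray reduces the lemma to minimizing $f(a)=nb^{n+1}/a-(n+1)b^n+2a^n$ over $0<a\leq b$. You instead work with the volume distribution function $V(r)$ of the sublevel sets of $G-m$: your homothety lemma (that $V(r)/r^n$ is non-increasing) plus the layer-cake identity reduce the lemma to one global minimization of $h(\kappa)$ over $\kappa\geq1$. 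The two calculus steps are in fact the same function in disguise---dividing the paper's $f(a)$ by $(n+1)b^n$ and setting $\kappa=b/a$ gives exactly your $h(\kappa)$---but the paper optimizes ray by ray, while you optimize once, globally, in the single parameter $\kappa=(W/v)^{1/n}$. What your version buys is robustness: there is no case analysis about where the minimum point sits, and the degenerate situations ($x_0\in\partial P$, a flat minimum set, $G$ unbounded) are absorbed automatically; the paper's ray-wise argument, by contrast, implicitly needs $G=0$ on $\partial S$ along each ray and that $S$ has nonempty interior. One small repair to your write-up: the justification that $v=V(\beta)>0$ via ``convexity along segments from $x_0$ into $\inte P$'' is insufficient for $n\geq2$, since segments have measure zero. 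Instead argue as follows: since $G<+\infty$ a.e., pick $n+1$ affinely independent points of $P$ at which $G$ is finite, let $\Delta$ be their convex hull (of positive volume) and $M$ the maximum of $G$ at the vertices; then on the set $x_0+t(\Delta-x_0)\subset P$, which has volume $t^n\vol(\Delta)>0$, convexity gives $G\leq(1-t)m+tM<0$ for $t>0$ small. With that one line fixed, every step---the homothety inclusion, the layer-cake identity, the computation giving the lower bound $W\beta\,h(\kappa)$, and $h\big(2^{1/(n+1)}\big)=2^{1/(n+1)}-1$---checks out.
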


\begin{proof}
 If $G\geq0$ on $P$ then $\int_PG\,dV\geq\inf_PG\cdot\vol(P)$ and we are done. Otherwise, consider the convex set $S=\{G<0\}\subset P$. It suffices to show that if $p\in\inte S$ then
$$-G(p)\leq\frac{1}{\big(2^{1/(n+1)}-1\big)\vol(P)}\,\int_P|G|\,dV\,.$$
We assume without loss of generality that $p=0$ and use spherical coordinates. For $\theta\in S^{n-1}$ let $0<a(\theta)\leq b(\theta)$ be defined by $a(\theta)\theta\in\partial S$, $b(\theta)\theta\in\partial P$. If $\sigma$ is the area measure on $S^{n-1}$ we have that
$$\vol(P)=\int_{S^{n-1}}\frac{b(\theta)^n}{n}\,d\sigma(\theta)\,.$$
By convexity it follows that
\begin{align*}
G(t\theta)&\leq\frac{-G(0)}{a(\theta)}\,(t-a(\theta))\leq0\,,\,\text{ if } 0\leq t\leq a(\theta)\,,\\
G(t\theta)&\geq\frac{-G(0)}{a(\theta)}\,(t-a(\theta))\geq0\,,\,\text{ if } a(\theta)<t\leq b(\theta)\,.
\end{align*}
Thus
\begin{align*}
\int_P|G|\,dV\;\geq\;&\int_{S^{n-1}}\int_0^{a(\theta)}\frac{-G(0)}{a(\theta)}\,(a(\theta)-t)t^{n-1}dt\,d\sigma(\theta)\;+\\
&\;\int_{S^{n-1}}\int_{a(\theta)}^{b(\theta)}\frac{-G(0)}{a(\theta)}\,(t-a(\theta))t^{n-1}dt\,d\sigma(\theta)\\
=&\;\frac{-G(0)}{n(n+1)}\int_{S^{n-1}}\left(\frac{nb(\theta)^{n+1}}{a(\theta)}-(n+1)b(\theta)^n+2a(\theta)^n\right)d\sigma(\theta)\,.
\end{align*}
Note that
$$
f(a):=\frac{nb^{n+1}}{a}-(n+1)b^n+2a^n\geq f\big(b2^{-\frac{1}{n+1}}\big)=(n+1)\big(2^{\frac{1}{n+1}}-1\big)b^n,
$$
$\text{ for } 0<a\leq b.$
Therefore
\begin{equation*}
\begin{split}\int_P|G|\,dV&\geq\frac{-G(0)}{n}\,\big(2^{\frac{1}{n+1}}-1\big)\int_{S^{n-1}}b(\theta)^n\,d\sigma(\theta)\\
&=-G(0)\big(2^{1/(n+1)}-1\big)\vol(P)\,,
\end{split}
\end{equation*}
and we are done.
\end{proof}

\section{Toric energy classes}\label{S:toricenergy}

Let $(X,\omega)$ be a toric compact K\"ahler manifold of dimension $n$. Then $X$ is a compactification of the complex torus $(\C^\star)^n$ such that the canonical action by multiplication of $(\C^\star)^n$ on itself extends to a holomorphic action of $(\C^\star)^n$ on $X$. Moreover, there exists a smooth strictly convex function $F_0:\R^n\to\R$ such that $\omega\mid_{(\C^\star)^n}=dd^cF_0\circ L$, where $L$ is defined in \eqref{e:L}. If $P$ is the compact convex polytope determined by $X$ then $\nabla F_0:\R^n\to\inte P$ is bijective and we may assume that $0\in\inte P$. Let $G_0$ denote the Legendre transform of $F_0$.

\subsection{Toric qpsh functions}

A toric $\omega$-psh function on $X$ is an $\omega$-psh function $\varphi$ that is invariant under the $(S^1)^n$ action induced by the $(\C^\star)^n$ action on $X$. We denote by $PSH_{tor}(X,\omega)$ the class of such functions. It follows that there exists a convex function $F_\varphi:\R^n\to\R$ such that
$$F_\varphi\circ L=F_0\circ L+\varphi\,\text{ on }(\C^\star)^n\subset X\,.$$
We denote by $G_\varphi$ the Legendre transform on $F_\varphi$. Note that $F_\varphi$ is continuous on $\R^n$, hence $\varphi$ is continuous on $(\C^\star)^n$.

We define the energy classes of toric $\omega$-psh functions by
\begin{equation*}
\begin{split}
{\mathcal E}_{tor}(X,\omega)&=PSH_{tor}(X,\omega)\cap{\mathcal E}(X,\omega),\\
{\mathcal E}^p_{tor}(X,\omega)&=PSH_{tor}(X,\omega)\cap{\mathcal E}^p(X,\omega),\\
{\mathcal E}_{\chi,tor}(X,\omega)&=PSH_{tor}(X,\omega)\cap{\mathcal E}_\chi(X,\omega),
\end{split}
\end{equation*}
where $p>0$ and $\chi\in\W$ (see sections \ref{SS:E}, \ref{SS:Echi}).

We begin with the following simple lemma:

\begin{Lemma}\label{L:F0FP} There exists a constant $C>0$ such that
$$-C\leq F_0(x)-F_P(x)\leq C\,,\;\forall\,x\in\R^n.$$

\end{Lemma}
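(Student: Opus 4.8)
The plan is to prove the two inequalities by quite different means: the upper bound is pure convex analysis, while the lower bound genuinely requires the smoothness of $\omega$ across the compactification. For the upper bound I would simply observe that, by hypothesis, $\nabla F_0(\R^n)=\inte P\subset P$, so Lemma \ref{L:conv2} (implication $(iii)\Rightarrow(i)$) gives $F_0\le F_P+C_1$; indeed the computation there yields the explicit bound $F_0(x)\le F_0(0)+F_P(x)$.

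The lower bound $F_0\ge F_P-C$ is the heart of the matter. By Legendre duality it is equivalent to $G_0\le G_P+C$, that is, since $G_P\equiv 0$ on $P$ and $G_P\equiv+\infty$ off $P$, to the boundedness $\sup_P G_0<+\infty$. We already know from Lemma \ref{L:conv1}$(ii)$ that $G_0$ is finite on $\inte P=\nabla F_0(\R^n)$, so the only issue is whether $G_0$ stays bounded as $s\to\partial P$. This cannot be deduced from convexity and the bijectivity of $\nabla F_0$ alone: already for $n=1$ and $P=[-1,1]$ one can build a smooth strictly convex $F_0$ with $F_0'(\R)=\inte P$ but $G_0$ unbounded on $P$ (take $F_0'(x)=1-1/(x+1)$ near $+\infty$, so that $F_0(x)-F_P(x)\to-\infty$). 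The missing ingredient is that $\omega$ extends to a \emph{smooth} K\"ahler form on the compact $X$.

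To feed this in, I would compare $\omega$ with Guillemin's model. Let $F_G:\R^n\to\R$ be the convex potential whose Legendre transform is Guillemin's function $G=\frac12\sum_i\ell_i\log\ell_i$. The function $G$ is continuous, hence bounded, on the compact polytope $P$ (each $\ell_i\ge0$ there and $t\log t$ extends continuously by $0$ at $t=0$), and $\nabla F_G(\R^n)=\inte P$; so the duality used above gives $|F_G-F_P|\le C'$. Now $\omega$ and $\omega_G:=dd^cF_G\circ L$ are both $(S^1)^n$-invariant smooth K\"ahler forms in the cohomology class determined by $P$, so the global $dd^c$-lemma, followed by averaging over the torus, produces a smooth $(S^1)^n$-invariant $\psi$ on $X$ with $\omega-\omega_G=dd^c\psi$. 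Writing $\psi=\psi_0\circ L$ on $(\C^\star)^n$, compactness of $X$ forces $\psi_0$ to be bounded, and since $dd^c\big((F_0-F_G-\psi_0)\circ L\big)=0$ the toric pluriharmonic function $F_0-F_G-\psi_0$ must be affine, say $\langle a,x\rangle+b$. Comparing the upper bound $F_0\le F_P+C_1$ with $F_G\ge F_P-C'$ shows that $\langle a,x\rangle$ is bounded above on $\R^n$, whence $a=0$; therefore $|F_0-F_G|\le\|\psi_0\|_\infty+|b|$, and combining with $|F_G-F_P|\le C'$ yields the assertion.

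I expect the main obstacle to be exactly this lower bound. It is not a formal consequence of the convex-analytic lemmas of Section \ref{S:convex}: one must import the geometric fact that the K\"ahler potential extends smoothly over the toric divisors of $X$ — equivalently, that $G_0$ obeys the Guillemin boundary behaviour. The comparison with $\omega_G$ is the cleanest way to package this, the delicate points being the torus-invariant $dd^c$-lemma on $X$ and the verification that the affine ambiguity $\langle a,x\rangle+b$ vanishes.
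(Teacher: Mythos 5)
Your proof is correct and follows essentially the same route as the paper: the upper bound via Lemma \ref{L:conv2}, and the lower bound by comparing $F_0$ with the potential of the Guillemin representative of $\{\omega\}$, whose Legendre transform is bounded on $P$, then dualizing. The only difference is one of detail: you justify the $dd^c$-lemma/torus-averaging step and the vanishing of the affine ambiguity $\langle a,x\rangle+b$, which the paper compresses into the single assertion that $F\circ L=F_0\circ L+\theta$ for some smooth $\omega$-psh function $\theta$.
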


\begin{proof} Since $\nabla F_0(\R^n)\subset P$ we have by Lemma \ref{L:conv2} that $F_0\leq F_P+C_1$ for some constant $C_1$. Let $\omega'\in\{\omega\}$ be a K\"ahler form with associated convex function $F$ such that its Legendre transform $G$ is given by Guillemin's formula. Then $F\circ L=F_0\circ L+\theta$ for some smooth $\omega$-psh function $\theta$. Hence $F\leq F_0+C_2$ and $G\geq G_0-C_2$, for some constant $C_2$. Since $G$ is bounded above on $P$ it follows that $G_0\leq G_P+C_3$, and so $F_0\geq F_P-C_3$, for some constant $C_3$.
\end{proof}

Our next result gives a characterization of toric $\omega$-psh functions:

\begin{Proposition}\label{P:torpsh} The following are equivalent:

(i) $\varphi\in PSH_{tor}(X,\omega)$;

(ii) $F_\varphi\leq F_P+C$ for some constant $C$;

(iii) $G_\varphi=+\infty$ on $\R^n\setminus P$;

(iv) $\nabla F_\varphi(\R^n)\subset P$.
\end{Proposition}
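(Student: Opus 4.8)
The plan is to reduce everything to the already-proved Lemma \ref{L:conv2} together with the comparison between $F_0$ and $F_P$ recorded in Lemma \ref{L:F0FP}. Applying Lemma \ref{L:conv2} to the convex function $F=F_\varphi$, whose Legendre transform is $G=G_\varphi$, immediately gives the equivalence of conditions (ii), (iii) and (iv). So the entire content of the statement is the equivalence of (i) with one of the others, say (ii), and this is where the toric geometry and the global nature of $X$ enter.

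For the implication $(i)\Rightarrow(ii)$ I would argue as follows. If $\varphi\in PSH_{tor}(X,\omega)$ then $\varphi$ is $\omega$-psh, hence upper semicontinuous on the compact manifold $X$ and therefore bounded above, say $\varphi\leq M$. Restricting the defining relation $F_\varphi\circ L=F_0\circ L+\varphi$ to $(\C^\star)^n$ gives $F_\varphi\leq F_0+M$ on $\R^n$, and combining this with the bound $F_0\leq F_P+C$ from Lemma \ref{L:F0FP} yields $F_\varphi\leq F_P+(M+C)$, which is (ii).

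The converse $(ii)\Rightarrow(i)$ is the substantive direction. Starting from a convex $F_\varphi$ with $F_\varphi\leq F_P+C$, I would set $\varphi:=F_\varphi\circ L-F_0\circ L$ on $(\C^\star)^n$. Convexity of $F_\varphi$ makes $F_\varphi\circ L$ psh, so $\omega+dd^c\varphi=dd^c(F_\varphi\circ L)\geq 0$ on $(\C^\star)^n$; moreover the other half of Lemma \ref{L:F0FP}, namely $F_P\leq F_0+C'$, gives $\varphi\leq C+C'$, so $\varphi$ is bounded above. Thus $\varphi$ is an $(S^1)^n$-invariant $\omega$-psh function on the Zariski-open set $(\C^\star)^n$, whose complement $X\setminus(\C^\star)^n$ is a (pluripolar) analytic subvariety near which $\varphi$ is locally bounded above.

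The only genuinely analytic point, and hence the main obstacle, is to cross this boundary divisor. I would invoke the removable-singularity theorem for (quasi)plurisubharmonic functions: an $\omega$-psh function on the complement of a closed pluripolar (here analytic) set that is locally bounded above near it extends to an $\omega$-psh function on all of $X$, the extension being the upper semicontinuous regularization of $\varphi$. Since $(\C^\star)^n$ is dense and $\varphi$ is $(S^1)^n$-invariant there, the extension is again $(S^1)^n$-invariant, so it lies in $PSH_{tor}(X,\omega)$, which is (i). Finally one checks that this extension still induces the convex function $F_\varphi$, which is immediate since the two agree on the dense set $(\C^\star)^n$ and any toric $\omega$-psh function is determined on $\R^n$ through $L$ by its values there.
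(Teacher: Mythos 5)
Your proposal is correct and follows essentially the same route as the paper: the equivalences $(ii)\Leftrightarrow(iii)\Leftrightarrow(iv)$ via Lemma \ref{L:conv2}, the implication $(i)\Rightarrow(ii)$ from boundedness above together with Lemma \ref{L:F0FP}, and the converse by extending the bounded-above, $(S^1)^n$-invariant $\omega$-psh function across the analytic set $X\setminus(\C^\star)^n$. The only difference is presentational: you spell out the removable-singularity theorem and the invariance of the upper semicontinuous extension, which the paper compresses into a single sentence.
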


\begin{proof}
If $\varphi\in PSH_{tor}(X,\omega)$ then $\varphi$ is bounded above on $X$, hence $F_\varphi\leq F_0+C'$ for some constant $C'$, and $(ii)$ follows by Lemma \ref{L:F0FP}.

Conversely, if $(ii)$ holds then by Lemma \ref{L:F0FP}, $F_\varphi\leq F_0+C'$ for some constant $C'$, hence $\varphi\leq C'$ on $(\C^\star)^n\subset X$. Since $X\setminus(\C^\star)^n$ is an analytic set invariant under the $(S^1)^n$ action, we conclude that $\varphi$ extends to an $\omega$-psh function on $X$ which is $(S^1)^n$ invariant.

The remaining equivalences $(ii)\Leftrightarrow(iii)\Leftrightarrow(iv)$ follow from Lemma \ref{L:conv2}.
\end{proof}

\begin{Proposition} \label{P:supest}
 If $\varphi\in PSH_{tor}(X,\omega)$ then
$$
\sup_X\varphi\leq C_P+\frac{1}{\big(2^{1/(n+1)}-1\big)\vol(P)}\,\int_P|G_\varphi|\,dV\,,
$$
where $C_P=\sup_PG_0=\sup_{\R^n}(F_P-F_0)$.
\end{Proposition}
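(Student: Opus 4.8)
The plan is to transfer $\sup_X\varphi$ to a supremum of $F_\varphi-F_0$ over $\R^n$, estimate this pointwise via Legendre duality by a quantity controlled by $\inf_P G_\varphi$, and then conclude with Lemma \ref{L:conv6}. First I would carry out the reduction to $\R^n$. Since $\varphi$ is $\omega$-psh, the sub-mean-value (maximum) principle for psh functions, together with the fact that $(\C^\star)^n$ is a dense open subset of $X$ whose complement is an analytic set, gives $\sup_X\varphi=\sup_{(\C^\star)^n}\varphi$. As $\varphi$ is $(S^1)^n$-invariant and $L:(\C^\star)^n\to\R^n$ is onto, the defining relation $F_\varphi\circ L=F_0\circ L+\varphi$ then yields $\sup_X\varphi=\sup_{\R^n}(F_\varphi-F_0)$.

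The heart of the argument is a pointwise Legendre estimate. Fix $x\in\R^n$. Because $F_\varphi$ is a finite convex function it has nonempty subgradient at $x$, and by Proposition \ref{P:torpsh}$(iv)$ any $s_x\in\nabla F_\varphi(x)$ lies in $P$. The duality relation $s\in\nabla F(x)\Leftrightarrow G(s)=\langle x,s\rangle-F(x)$ gives $F_\varphi(x)=\langle x,s_x\rangle-G_\varphi(s_x)$, while the definition of the Legendre transform gives $F_0(x)\geq\langle x,s_x\rangle-G_0(s_x)$. Subtracting these and using $s_x\in P$, $G_0(s_x)\leq\sup_PG_0=C_P$, and $G_\varphi(s_x)\geq\inf_PG_\varphi$, I obtain
$$F_\varphi(x)-F_0(x)\leq G_0(s_x)-G_\varphi(s_x)\leq C_P-\inf_PG_\varphi\leq C_P+\big|\inf_PG_\varphi\big|.$$
Taking the supremum over $x\in\R^n$ and using the first step gives $\sup_X\varphi\leq C_P+\big|\inf_PG_\varphi\big|$.

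Finally I would invoke Lemma \ref{L:conv6}. By Proposition \ref{P:torpsh}$(iii)$ we have $G_\varphi=+\infty$ on $\R^n\setminus P$, and being a Legendre transform $G_\varphi$ is lower semicontinuous and convex; hence $G_\varphi\colon P\to\R\cup\{+\infty\}$ meets the hypotheses of Lemma \ref{L:conv6}. That lemma bounds $\big|\inf_PG_\varphi\big|$ by $\frac{1}{(2^{1/(n+1)}-1)\vol(P)}\int_P|G_\varphi|\,dV$, which combined with the previous estimate is exactly the desired inequality. (If this integral is infinite the statement is trivial.)

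The only genuinely delicate point, and the one I expect to be the main obstacle, is the reduction $\sup_X\varphi=\sup_{\R^n}(F_\varphi-F_0)$: for a merely upper semicontinuous function the supremum over a dense set need not equal the global supremum, so one must use the sub-mean-value property of $\omega$-psh functions to rule out an upward jump of $\varphi$ along the analytic set $X\setminus(\C^\star)^n$. Once this is in place, everything reduces to a direct combination of elementary Legendre duality with Lemma \ref{L:conv6}.
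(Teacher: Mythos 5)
Your proof is correct and follows essentially the same route as the paper: reduce $\sup_X\varphi$ to $\sup_{\R^n}(F_\varphi-F_0)$, bound this via Legendre duality by $C_P-\inf_PG_\varphi$, and conclude with Lemma \ref{L:conv6}. The paper phrases the duality step as the equivalence ``$F_\varphi-F_0\leq C$ on $\R^n$ iff $G_0-G_\varphi\leq C$ on $P$'' rather than your pointwise subgradient computation, and it leaves the reduction $\sup_X\varphi=\sup_{\R^n}(F_\varphi-F_0)$ implicit, but these are only presentational differences.
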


\begin{proof}
Note that, for a constant $C$, one has $F_\varphi-F_0\leq C$ on $\R^n$ if and only if $G_0-G_\varphi\leq C$ on $P$. It follows that
$$
\sup_X\varphi=\sup_{\R^n}(F_\varphi-F_0)=\sup_P(G_0-G_\varphi)\leq C_P-\inf_PG_\varphi\,,
$$
and the proposition follows from Lemma \ref{L:conv6}.
\end{proof}

\begin{Example}
Let $X=\F_1$ be
the blow up of $\P^2$ at a toric point $p$.
It is a geometrically ruled surface.
We let $F$ denote a generic fiber, $E$ be the exceptional divisor, and $H=E+F$ the total transform of a line through $p$. The cohomology classes of $F$ and $H$ are both semi-positive and
generate $H^{1,1}(X,\R)$. Any K\"ahler class $\{\omega\}$ is cohomologous to $aH+bF$, with $a,b>0$.
In coordinates $z \in (\C^*)^2$ it can be represented by
$$\omega=a \omega_1+b\omega_2, \text{ where } \omega_1=\frac{1}{2}\,dd^c \log(1+\|z\|^2),\;\omega_2=dd^c \log\|z\|.$$
The convex function associated to $\omega$ is
$$
F_0(x)=\frac{a}{2}\,\log \left( 1+e^{2x_1}+e^{2x_2} \right)+\frac{b}{2}\,\log \left(e^{2x_1}+e^{2x_2} \right),
$$
and $P=\overline{\nabla F_0(\R^2)}$ is the polytope
$$
P=\left\{s_1\geq0,\;s_2 \geq 0, \, b\leq s_1+s_2 \leq a+b\right\}.
$$
Thus $d=4$, $\ell_1(s)=s_1$, $\ell_2(s)=s_2$, $\ell_3(s)=a+b-s_1-s_2$, and $\ell_4(s)=s_1+s_2-b$.
For $s\in P$, the Legendre transform of $F_0$ is given by
\begin{equation*}\begin{split}
G_0(s) &= \frac{1}{2} \big[ s_1 \log s_1 + s_2 \log s_2 +(a+b-s_1-s_2) \log (a+b-s_1-s_2)+  \\
& (s_1+s_2-b)\log (s_1+s_2-b)-(s_1+s_2)\log(s_1+s_2)-a\log a \big].
\end{split}\end{equation*}
\end{Example}

\subsection{The class ${\mathcal E}_{tor}(X,\omega)$}

\begin{Definition}\label{D:fullMA}
Let $F:\mathbb{R}^n\rightarrow\mathbb{R}$ be a convex function such that $F\leq F_P+C$. We say that $F$ has \emph{full Monge-Amp\`{e}re mass  } if
$$\int_{\R^n}\,MA_\R(F)=\int_{\R^n}\,MA_\R(F_0)=n!\vol(P)\,.$$
\end{Definition}

Recall that a {\em toric point} of $X$ is a point fixed by the action of the complex torus $(\C^\star)^n$ on $X$.

\begin{Theorem} \label{T:Etor} Let $\f \in PSH_{tor}(X,\omega)$. The following  are equivalent:

(i) $\f \in \E_{tor}(X,\omega)$;

(ii) $G_\f$ is finite on $\inte P$;

(iii) $F_\f$ has full Monge-Amp\`ere mass;

(iv) for every $\varepsilon>0$ there exists a compact set $K_\varepsilon\subset(\C^\star)^n$  such that
$$\varphi(z)\geq-\varepsilon\max\big\{\big|\log|z_1|\big|,\ldots,\big|\log|z_n|\big|\big\}\,\text{ on } (\C^\star)^n\setminus K_\varepsilon.$$

(v) the Lelong numbers $\nu(\varphi,p)=0$ for all $p\in X$.

(vi) the Lelong numbers $\nu(\varphi,p)=0$ at all toric points $p\in X$.
  \end{Theorem}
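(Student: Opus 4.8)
The plan is to establish the chain of equivalences by grouping the conditions according to which of the three viewpoints (on $X$, on $\R^n$, on $P$) they live in, and then bridging between the viewpoints using the convexity lemmas of Section \ref{S:convex}. The heart of the matter is the equivalence $(i)\Leftrightarrow(ii)\Leftrightarrow(iii)$, relating full Monge-Amp\`ere mass on $X$, finiteness of $G_\f$ on $\inte P$, and full real Monge-Amp\`ere mass of $F_\f$ on $\R^n$. For $(ii)\Leftrightarrow(iii)$ I would invoke Lemma \ref{L:conv5} directly: since $\f\in PSH_{tor}(X,\omega)$ gives $F_\f\leq F_P+C$ by Proposition \ref{P:torpsh}, and we have arranged $0\in\inte P$, condition $(i)$ of that lemma is exactly our $(ii)$, and the concluding assertion $\int_{\R^n}MA_\R(F_\f)=n!\,\vol(P)$ is exactly our $(iii)$ (using Definition \ref{D:fullMA} and Lemma \ref{L:conv3} to identify $n!\,\vol(P)$ with $\int_{\R^n}MA_\R(F_0)$). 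So $(ii)\Rightarrow(iii)$ is immediate; for the converse I would argue contrapositively, showing that if $G_\f(s_0)=+\infty$ for some $s_0\in\inte P$ then convexity forces $G_\f=+\infty$ on an open subset of $\inte P$, which by Lemma \ref{L:conv5} (the failure of $(i)$ there) strictly decreases the mass below $n!\,\vol(P)$.

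For the bridge $(i)\Leftrightarrow(iii)$ between the complex picture on $X$ and the real picture on $\R^n$, the key is Lemma \ref{L:MAR}, which identifies $\int_{(\C^\star)^n}MA(\f_j)$-type integrals with real Monge-Amp\`ere integrals $\int_{\R^n}MA_\R(F_{\f_j})$ via the logarithmic change of variables. The point is that the complex Monge-Amp\`ere mass $\mu_\f(X)=V_\omega$ characterizing membership in $\E(X,\omega)$ is entirely carried on the open orbit $(\C^\star)^n$ for toric functions, since the complement $X\setminus(\C^\star)^n$ is a lower-dimensional analytic set; so $\mu_\f(X)=V_\omega$ should translate, through Lemma \ref{L:MAR} applied to truncations $F_{\max(\f,-j)}$ and the monotone convergence defining $\mu_\f$, into $\int_{\R^n}MA_\R(F_\f)=n!\,\vol(P)=V_\omega$, i.e.\ full real mass. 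Here I would use the truncation $\f_j=\max(\f,-j)$ on the $X$ side, which corresponds on the $\R^n$ side to $F_{\f_j}=\max(F_\f,F_0-j)$, and push the weak convergence of Bedford-Taylor through Lemma \ref{L:MAR}.

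The condition $(iv)$ is a quantitative restatement of $(ii)$ at the level of the function $\f$ itself: after Proposition \ref{P:torpsh} and Lemma \ref{L:conv4} (giving $b\|x\|\le F_P(x)\le a\|x\|$), the lower bound $F_\f\geq(1-\e)F_P-M_\e$ from part $(ii)$ of Lemma \ref{L:conv5} translates, via $\f=F_\f\circ L-F_0\circ L$ and $F_0\leq F_P+C$, into the asymptotic lower bound on $\f$ in terms of $\max_i|\log|z_i||$ stated in $(iv)$; conversely such a bound yields the growth estimate forcing $G_\f$ finite on $\inte P$. So I would prove $(ii)\Leftrightarrow(iv)$ by unwinding these inequalities. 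Finally, for the Lelong-number conditions, $(v)\Rightarrow(vi)$ is trivial, and the implications $(iv)\Rightarrow(v)$ and $(vi)\Rightarrow(ii)$ close the loop: from $(iv)$ the singularity of $\f$ grows strictly slower than any logarithmic pole, forcing all Lelong numbers to vanish; conversely a positive Lelong number at a toric point $p$ corresponds, under the moment map, to a vertex of $P$ at which $G_\f$ blows up, violating $(ii)$, so its vanishing at all toric points recovers finiteness of $G_\f$ on $\inte P$.

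The main obstacle I expect is the careful translation in $(i)\Leftrightarrow(iii)$ of the global complex mass condition $\mu_\f(X)=V_\omega$ into the real-variable statement, because one must justify that no mass of $MA(\f)$ escapes onto the boundary divisor $X\setminus(\C^\star)^n$ and must control the behavior of the truncations $F_{\f_j}$ at infinity (which is exactly where Lemma \ref{L:AT} and the growth hypothesis $F_\f\to+\infty$ become delicate when $G_\f$ is only finite on the open part $\inte P$ rather than on all of $P$). Getting the monotone-limit and weak-convergence steps to commute with the change of variables of Lemma \ref{L:MAR} is the technical crux; the remaining equivalences are then formal consequences of the convexity dictionary already assembled in Section \ref{S:convex}.
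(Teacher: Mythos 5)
The decisive gap is your treatment of condition $(vi)$. To prove all six equivalences you must get from $(vi)$ back into the cycle, i.e.\ prove $(vi)\Rightarrow(ii)$ (the paper proves $(vi)\Rightarrow(iv)$, which is the bulk of its proof). Your argument runs in the opposite logical direction: ``a positive Lelong number at a toric point corresponds to a vertex of $P$ at which $G_\f$ blows up, violating $(ii)$'' is (at best) the contrapositive of $(ii)\Rightarrow(vi)$; with it, $(vi)$ is only implied by the other conditions and never implies them, so the chain does not close. Worse, the asserted correspondence is false as stated: blow-up of $G_\f$ \emph{at} a vertex is a boundary phenomenon and does not contradict finiteness on $\inte P$. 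For instance on $\P^1$ the convex function $F(x)=x-1$ for $x\geq-1$, $F(x)=-2\sqrt{-x}$ for $x\leq-1$, determines $\f\in\E_{tor}(\P^1,\omega_{FS})$ with all Lelong numbers zero, yet $G_\f(0)=+\infty$ at the vertex $s=0$. The real content of $(vi)\Rightarrow(iv)$ is the chart analysis that you omit entirely: cover $X$ by toric charts $\Phi_j:V_j\to\C^n$ centered at the fixed points (cf.\ \cite{ALZ}), observe that $F^j_\f\circ L$ extends to a psh function on $\C^n$ with zero Lelong number at $0$, convert this into $F^j_\f(\log r,\ldots,\log r)\geq\frac{\e}{2}\log r$ for small $r$, and patch these estimates using monotonicity in each variable and the $GL_n(\Z)$ transition rule between charts to obtain the uniform bound $(iv)$. ``Under the moment map'' does not substitute for this. (A Legendre-side route does exist: zero Lelong number at the fixed point $p_k$ amounts to the domain $\{G_\f<+\infty\}$ accumulating at the corresponding vertex $v_k$, and finiteness on $\inte P$ then follows by taking convex hulls of points near each vertex; but identifying the Lelong number in this way still requires the chart computation, and it is not what you wrote.)

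Two further points. First, your contrapositive for $(iii)\Rightarrow(ii)$ overstates Lemma \ref{L:conv5}: that lemma says its two conditions are equivalent and that they \emph{imply} full mass; it does not say that failure of finiteness forces a strict mass deficit. To conclude you would need the Alexandrov description of the total mass, $\int_{\R^n}MA_\R(F)=n!\,\vol\big(\nabla F(\R^n)\big)$, combined with Lemma \ref{L:conv1} $(ii)$ (the gradient image avoids the open set where $G_\f=+\infty$); this is standard but is not among the paper's lemmas, so it must be supplied --- the paper instead avoids $(iii)\Rightarrow(ii)$ altogether by closing the cycle $(i)\Rightarrow(ii)\Rightarrow(iii)\Rightarrow(i)$, with $(i)\Rightarrow(ii)$ via Proposition \ref{P:Echitor} and $(iii)\Rightarrow(i)$ via the non-pluripolar product of \cite{BEGZ10}. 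Second, your bridge $(i)\Leftrightarrow(iii)$ by truncation is a genuine and workable alternative to that last step, but ``weak convergence'' is the wrong mechanism: the correct one is that $MA(\max(\f,-j))$ puts no mass on the pluripolar set $X\setminus(\C^\star)^n$, that Bedford--Taylor locality together with Lemma \ref{L:MAR} gives $\mu_j(X)=MA_\R(F_\f)\big(\{F_\f>F_0-j\}\big)$, and that monotone convergence then yields $\mu_\f(X)=\int_{\R^n}MA_\R(F_\f)$. Weak convergence alone proves nothing here, since real Monge--Amp\`ere mass escapes to infinity in $\R^n$ precisely in the cases where $\f\notin\E(X,\omega)$.
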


\begin{proof}
To prove that $(i)\Rightarrow (ii)$, if $\f \in \E_{tor}(X,\omega)$ then $\f \in \E_{\chi,tor}(X,\omega)$ for some function $\chi\in\W$. By Proposition \ref{P:Echitor} following this proof, $G_\f \in L_{\chi}(P)$, so $G_\f<+\infty$ a.e. on $P$. Since $G_\f$ is convex, this implies that $G_\f(s)<+\infty$ for all $s\in\inte P$. The implication $(ii)\Rightarrow (iii)$ follows from Lemma \ref{L:conv5}.

We next prove that $(iii)\Rightarrow (i)$. Consider the measure $\langle\omega_{\f}^{n}\rangle$ defined as the non-pluripolar product of the positive closed currents $\omega_\f:=\omega + dd^c\f$ \cite[Definition 1.1]{BEGZ10}. As $\f$ is locally bounded on $(\mathbb{C}^*)^n$, the Bedford-Taylor product $\omega^{n}_\f=\omega_\f\wedge\ldots\wedge\omega_\f$ is well defined on $(\mathbb{C}^*)^n$ \cite{BT76,BT82}. Since $(\mathbb{C}^*)^n=X\setminus A$, where $A$ is an analytic subset of $X$, it follows from \cite[p.\ 204, Proposition 1.6]{BEGZ10} that $\langle\omega^{n}_\f\rangle$ is the trivial extension of $\omega^{n}_\f$ to $X$. Then
\begin{eqnarray*}
\int_{X}\langle\omega^{n}_\f\rangle=\int_{(\C^{*})^{n}}\omega_\f^{n}&=&\int_{(\C^{*})^{n}}(dd^cF_\f\circ L)^n \\
&=& \int_{\R^{n}}MA_\R(F_{\f})=\int_{\R^{n}}MA_\R(F_0)=\int_{X}\omega^{n}\,.
\end{eqnarray*}

Therefore $\langle\omega^{n}_\f\rangle$ has full mass, so $\f\in \E_{tor}(X,\omega)$ and $MA(\varphi)=\langle\omega_{\f}^{n}\rangle$ \cite[Sect.\ 2]{BEGZ10}.

To show that $(ii)\Rightarrow(iv)$, let $\varepsilon>0$. Using Lemmas \ref{L:conv5} and \ref{L:F0FP} we get
$$\varphi=(F_\f-F_0)\circ L\geq-\varepsilon F_P\circ L-C-M_\varepsilon\,,$$
with some constants $C,M_\varepsilon>0$. By Lemma \ref{L:conv4} there exists a constant $a>0$ such that $F_P(x)\leq a\max\{|x_1|,\ldots,|x_n|\}$. These imply that
$$\varphi(z)\geq-2a\varepsilon\max\big\{\big|\log|z_1|\big|,\ldots,\big|\log|z_n|\big|\big\}$$
for $z\in(\C^\star)^n\setminus K_\varepsilon$,  where $K_\varepsilon=\{\varepsilon F_P\circ L\leq C+M_\varepsilon\}$.

Conversely, to prove that $(iv)\Rightarrow(ii)$, we let $\varepsilon\in(0,1)$ and by applying Lemma \ref{L:conv5} we need to show that there exists $M_\varepsilon>0$ such that $F_\f\geq(1-\varepsilon)F_P-M_\varepsilon$. By Lemma \ref{L:conv4} we have $F_P(x)\geq b\max\{|x_1|,\ldots,|x_n|\}$ for some constant $b>0$. Using $(iv)$ and Lemma \ref{L:F0FP} we obtain that
\begin{align*}
F_\f(L(z))&=F_0(L(z))+\varphi(z) \\
&\geq F_P(L(z))-C-b\varepsilon\max\big\{\big|\log|z_1|\big|,\ldots,\big|\log|z_n|\big|\big\}\\
&\geq(1-\varepsilon)F_P(L(z))-C\,,
\end{align*}
for $z\in(\C^\star)^n\setminus K_\varepsilon$, where $K_\varepsilon\subset(\C^\star)^n$ is a compact set. Since $F_\f,F_P$ are continuous this implies that $F_\f(L(z))\geq(1-\varepsilon)F_P(L(z))-M_\varepsilon$ on $(\C^\star)^n$, for some constant $M_\varepsilon>C$.

Recall that functions in the class $\E(X,\omega)$ have zero Lelong number at each point. To complete the proof we assume that $\f \in PSH_{tor}(X,\omega)$ has zero Lelong number at all the toric points of $X$ and show that $(iv)$ holds.

Let $\varepsilon>0$ and let $p_1,\ldots,p_N$ be the toric points of $X$. We denote as before by $z=(z_1,\ldots,z_n)$ the coordinates on the complex torus $(\C^\star)^n\subset X$. For $1\leq j\leq N$, there exists an open set $p_j\in V_j\subset X$ and a biholomorphic map $\Phi_j:V_j\to\C^n$ such that $\Phi_j(p_j)=0$, $(\C^\star)^n\subset V_j$, $\Phi_j((\C^\star)^n)=(\C^\star)^n$, and $X=V_1\cup\ldots\cup V_N$ (see e.g. \cite[Proposition 4.4]{ALZ} and its proof). Moreover, if $\Phi_j(z)=\zeta=(\zeta_1,\ldots,\zeta_n)$ then
$$(\log|\zeta_1|,\ldots,\log|\zeta_n|)=A_j(\log|z_1|,\ldots,\log|z_n|)\,,\,\text{ where } A_j\in GL_n(\Z)\,.$$
We denote by $\|A_j\|_\infty$ the operator norm of $A_j$ with respect to the sup norm (i.e. $\|A_jx\|_\infty\leq\|A_j\|_\infty\|x\|_\infty$) and let $\gamma:=\max\{\|A_1\|_\infty,\ldots,\|A_N\|_\infty\}$. Since $X$ is compact we can find $R>0$ such that $X=\bigcup_{j=1}^N\Phi_j^{-1}\big(\Delta^n(0,R)\big)$, where $\Delta^n(0,R)\subset\C^n$ is the open polydisc of radius $R$ centered at $0$.

We have that $\big(\Phi_j^{-1}\big)^\star\big(\omega\mid_{(\C^\star)^n}\big)=dd^cF_0^j\circ L$, where $L(\zeta_1,\ldots,\zeta_n)=(\log|\zeta_1|,\ldots,\log|\zeta_n|)$ and $F_0^j:\R^n\to\R$ is a smooth strictly convex function such that $F_0^j\circ L$ extends to a smooth psh function on $\C^n$. It follows that $\nabla F_0^j(\R^n)\subset(0,+\infty)^n$. Moreover, there exists a convex function $F_\f^j:\R^n\to\R$ such that $\nabla F_\f^j(\R^n)\subset[0,+\infty)^n$ and $F_\f^j\circ L=F_0^j\circ L+\varphi\circ\Phi_j^{-1}$ on $(\C^\star)^n$. The function $F_\f^j\circ L$ extends to a psh function on $\C^n$ and has Lelong number $\nu(F_\f^j\circ L,0)=0$, since Lelong numbers are invariant under biholomorphic maps. Hence there exists $r_j=r_j(\varepsilon)>0$ such that $F_\f^j(\log r,\ldots,\log r)\geq\frac{\varepsilon}{2}\,\log r$ for $0<r<r_j$. Since $F_\f^j$ is increasing in each variables this implies
$$
F_\f^j(\log|\zeta_1|,\ldots,\log|\zeta_n|)\geq\frac{\varepsilon}{2}\,\min\{\log|\zeta_1|,\ldots,\log|\zeta_n|\}\,,
$$
$\,\text{ if } \min\{|\zeta_1|,\ldots,|\zeta_n|\}<r_j\,.$
So there exists a constant $M_\varepsilon>0$ such that
$$
\varphi\circ\Phi_j^{-1}(\zeta)=F_\f^j\circ L(\zeta)-F_0^j\circ L(\zeta)\geq\frac{\varepsilon}{2}\,\min\{\log|\zeta_1|,\ldots,\log|\zeta_n|\}-M_\varepsilon\,,
$$
for $\zeta\in\Delta^n(0,R)$ with $\min\{|\zeta_1|,\ldots,|\zeta_n|\}<r_j$.
By shrinking $r_j$ we obtain
$$
\varphi\circ\Phi_j^{-1}(\zeta)\geq\varepsilon\min\{\log|\zeta_1|,\ldots,\log|\zeta_n|\}=-\varepsilon\max\big\{\big|\log|\zeta_1|\big|,\ldots,\big|\log|\zeta_n|\big|\big\},
$$
for $\zeta\in\Delta^n(0,R)$ with $\min\{|\zeta_1|,\ldots,|\zeta_n|\}<r_j$. It follows that
$$\varphi(z)\geq-\gamma\varepsilon\max\big\{\big|\log|z_1|\big|,\ldots,\big|\log|z_n|\big|\big\},$$
if $z\in U_j:=(\C^\star)^n\cap\Phi_j^{-1}\big(\Delta^n(0,R)\cap\{\zeta\in\C^n:\,\min\{|\zeta_1|,\ldots,|\zeta_n|\}<r_j\}\big)$. We note that $U_\varepsilon:=\bigcup_{j=1}^NU_j\subset(\C^\star)^n$ is open and $K_\varepsilon:=(\C^\star)^n\setminus U_\varepsilon$ is compact. Moreover the above lower estimate on $\varphi$ holds on $(\C^\star)^n\setminus K_\varepsilon$. This concludes the proof.
\end{proof}

\begin{Corollary}\label{C:Etor} If $\f\in\E_{tor}(X,\omega)$ and $\chi$ is a nonnegative continuous function on $\R^n$ then
$$\int_{(\C^\star)^n}(\chi\circ L)\,(dd^c F_\f\circ L)^n=\int_{\R^n} \chi\,MA_\R(F_\f)=n!\int_{\inte P}\chi(\nabla G_\f(s))\,dV(s).$$
\end{Corollary}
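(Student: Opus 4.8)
The plan is to prove the two equalities separately: the first extends Lemma~\ref{L:MAR} from compactly supported to arbitrary nonnegative continuous $\chi$, and the second is a change of variables implemented through the subgradient correspondence between $F_\f$ and $G_\f$.

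For the first equality, I would begin by recalling that since $\f\in PSH_{tor}(X,\omega)$ the convex function $F_\f$ is continuous on $\R^n$, so $F_\f\circ L$ is a locally bounded psh function on $(\C^\star)^n$ and the Bedford--Taylor product $(dd^cF_\f\circ L)^n$ is a well-defined positive measure there. I would then choose continuous cutoffs $\eta_k$ with $0\le\eta_k\le\eta_{k+1}\le1$, each of compact support and $\eta_k\nearrow1$ pointwise on $\R^n$, and set $\chi_k:=\chi\,\eta_k$. Each $\chi_k$ is continuous, compactly supported, nonnegative, and $\chi_k\nearrow\chi$. Applying Lemma~\ref{L:MAR} to each $\chi_k$ and letting $k\to\infty$, the monotone convergence theorem (legitimate because $(dd^cF_\f\circ L)^n$ and $MA_\R(F_\f)$ are positive measures and the $\chi_k$ increase) gives
$$\int_{(\C^\star)^n}(\chi\circ L)\,(dd^c F_\f\circ L)^n=\int_{\R^n}\chi\,MA_\R(F_\f).$$

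For the second equality I would use that, in the sense of Alexandrov, $MA_\R(F_\f)$ is characterized by $MA_\R(F_\f)(E)=n!\,\bigl|\nabla F_\f(E)\bigr|$ for Borel sets $E\subset\R^n$, where $|\cdot|$ denotes Lebesgue measure (consistent with the smooth formula of Lemma~\ref{L:MAR}). Since $\f\in\E_{tor}(X,\omega)$, Theorem~\ref{T:Etor} tells me that $G_\f$ is finite on $\inte P$ and that $F_\f$ has full Monge--Amp\`ere mass, i.e. $\int_{\R^n}MA_\R(F_\f)=n!\,\vol(P)$. Finiteness of $G_\f$ on the open convex set $\inte P$ makes $G_\f$ differentiable Lebesgue-a.e. there, so $\nabla G_\f\colon\inte P\to\R^n$ is defined a.e. and the right-hand integral is meaningful; the full-mass condition forces $\bigl|\nabla F_\f(\R^n)\bigr|=\vol(P)$, so $\nabla F_\f(\R^n)$ fills $\inte P$ up to a null set. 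I would then invoke the duality $s\in\nabla F_\f(x)\Longleftrightarrow x\in\nabla G_\f(s)$ to show that, for every Borel $E\subset\R^n$, the image $\nabla F_\f(E)$ and the set $\{s\in\inte P:\nabla G_\f(s)\in E\}$ coincide outside the null set where $G_\f$ fails to be differentiable together with $\partial P$. Consequently
$$MA_\R(F_\f)(E)=n!\,\bigl|\nabla F_\f(E)\bigr|=n!\,\bigl|\{s\in\inte P:\nabla G_\f(s)\in E\}\bigr|=n!\,(\nabla G_\f)_\ast\bigl(dV|_{\inte P}\bigr)(E),$$
and integrating $\chi$ against this identity of measures yields the stated second equality.

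The hard part will be the measure-theoretic bookkeeping in the second step: one must verify that the (multivalued) subgradient image $\nabla F_\f(E)$ agrees Lebesgue-a.e. with $\{\nabla G_\f\in E\}$, which requires controlling the null sets of non-differentiability of $F_\f$ and $G_\f$ and using the full-mass hypothesis to rule out any loss of Monge--Amp\`ere mass toward $\partial P$ or toward infinity. This is precisely the classical Alexandrov change of variables (see \cite{Gut01}), here anchored by the equivalences of Theorem~\ref{T:Etor}; everything else reduces to monotone convergence and the already established properties of $F_\f$ and $G_\f$.
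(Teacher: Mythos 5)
Your proposal is correct, but it takes a genuinely different route from the paper on the second equality. The paper's proof is two citations: the first equality ``follows from Lemma \ref{L:MAR}'' (with the extension from compactly supported to nonnegative continuous $\chi$ left implicit, exactly the monotone-convergence argument you spell out), and the second equality is quoted directly from \cite[Lemma 2.7]{BerBer13}, with Theorem \ref{T:Etor} supplying the full Monge--Amp\`ere mass hypothesis. You instead reprove that lemma from scratch via Alexandrov's characterization $MA_\R(F_\f)(E)=n!\,\abs{\nabla F_\f(E)}$ and the subgradient duality. Your sketch does close: the ``hard part'' you flag is in fact routine, since Theorem \ref{T:Etor} gives $G_\f$ finite on $\inte P$, hence $G_\f$ is locally Lipschitz and differentiable off a null set $N\subset\inte P$; by Proposition \ref{P:torpsh} one has $\nabla F_\f(E)\subset P$ and $\partial P$ is null; and for $s\in\inte P\setminus N$ the duality $s\in\nabla F_\f(x)\Leftrightarrow x\in\partial G_\f(s)=\{\nabla G_\f(s)\}$ gives exactly $\nabla F_\f(E)\cap(\inte P\setminus N)=\{s\in\inte P\setminus N:\nabla G_\f(s)\in E\}$, so no mass is lost toward $\partial P$ or infinity (this is Lemma \ref{L:conv1}(ii) in disguise). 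The trade-off: the paper's proof is shorter and delegates the measure theory to \cite{BerBer13}, while your argument is self-contained, makes visible that the only real input is finiteness of $G_\f$ on $\inte P$ (equivalently full mass, by Theorem \ref{T:Etor}) together with classical convex duality, and matches the paper's normalization of $MA_\R$ (the factor $n!$) explicitly, which is a point one must check when importing the Alexandrov definition from \cite{Gut01}.
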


\begin{proof} The first equality follows from Lemma \ref{L:MAR}. The second one follows from \cite[Lemma 2.7]{BerBer13}, since $F_\f$ has full Monge-Amp\`ere mass by Theorem \ref{T:Etor}.
\end{proof}

 \begin{Examples}\label{E:Pn}
 If $X=\P^n$ is the complex projective space and $\omega$ is the Fubini-Study K\"ahler form, then
$F_0(x)=\frac{1}{2}\,\log \left( 1+\sum_{i=1}^n e^{2x_i} \right)$ and $P=\overline{\nabla F_0(\R^n)}$ is the simplex
$$
P=\left\{s_i \geq 0, \, 1 \leq i \leq n,\;\sum_{i=1}^n s_i \leq 1\right\}.
$$
Thus $d=n+1$, $\ell_i(s)=s_i$ for $1 \leq i \leq n$, and $\ell_{n+1}(s)=1-\sum_{i=1}^n s_i$.
The Legendre transform of $F_0$ is
$$
G_0(s)=\frac{1}{2} \left[ \sum_{i=1}^n s_i \log s_i+\left(1-\sum_{j=1}^n s_j \right)\log\left( 1-\sum_{j=1}^n s_j \right) \right].
$$
This coincides with the function given by Guillemin's formula.

 \smallskip

 1)  Let $[z]=[z_0:z_1:\ldots:z_n]$ denote the homogeneous coordinates on $\P^n$. The function
 $$\f_1[z]=\log|z_1|-\log\|z\|$$
 is $\omega$-psh and toric. It does not belong to the class $\E_{tor}(X,\omega)$
 since it has positive Lelong numbers along the toric hyperplane $(z_1=0)$.
 The associated convex function $F_1:\R^n \rightarrow \R$
 is given by $F_1(x)=x_1$ and   its Legendre transform is
 $$
 G_1(s)=0 \text{ if } s=(1,0,\ldots,0),
 $$
 and $G_1(s)=+\infty$ otherwise.

  \smallskip

 2) The function
 $$\f_2[z]=\max_{1 \leq i \leq n} \log |z_i|-\log\|z\|$$
  is $\omega$-psh and toric. It does not belong to the class $\E_{tor}(X,\omega)$
 since it has one positive Lelong number at the point $[1:0:\cdots:0]$. The corresponding convex function
 is $F_2(x)=\max_{1 \leq i \leq n} x_i$ and its Legendre transform is
 $$
 G_2(s)=0 \text{ if } s \in \left\{s_i \geq 0, \, 1 \leq i \leq n,\;\sum_{i=1}^n s_i=1\right\},
 $$
  and $G_2(s)=+\infty$ otherwise.
 \end{Examples}

\subsection{The classes ${\mathcal E}_{\chi,tor}(X,\omega)$}

If $\chi\in\W$ we define $L_{\chi}(P)$ to be the set of lower semicontinuous functions $G:P\to\R\cup\{+\infty\}$ such that
$$\int_P-\chi\big(\min_PG-G(s)\big)\,dV(s)<+\infty.$$

\begin{Proposition} \label{P:Echitor}
Let $\f \in PSH_{tor}(X,\omega)$. If $\chi\in\W$ and $\f \in \E_{\chi,tor}(X,\omega)$ then $G_\f \in L_{\chi}(P)$. Conversely, if $p\geq1$ and $G_\f \in L^p(P)$ then $\f \in \E^p_{tor}(X,\omega)$.
\end{Proposition}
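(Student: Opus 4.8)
The plan is to push everything to the interior of the Delzant polytope, where the complex energy of $\f$ becomes a Lebesgue integral of a convex--analytic quantity, and then to compare that quantity with $G_\f$ by Legendre duality. Both implications first require $\f\in\E_{tor}(X,\omega)$: in the direct statement this is automatic since $\E_{\chi,tor}(X,\omega)\subset\E_{tor}(X,\omega)$, while in the converse $G_\f\in L^p(P)$ forces $G_\f<+\infty$ a.e., hence on all of $\inte P$ by convexity, so $\f\in\E_{tor}(X,\omega)$ by Theorem \ref{T:Etor}. Granting this, Corollary \ref{C:Etor} (together with Lemma \ref{L:MAR}) lets me write, for a nonnegative continuous weight $w$ on $\R^n$,
\[
\int_X (w\circ L)\,MA(\f)=\int_{\R^n}w\,MA_\R(F_\f)=n!\int_{\inte P}w\big(\nabla G_\f(s)\big)\,dV(s).
\]
Taking $w=-\chi(-|F_\f-F_0|)$ (resp.\ $w=|F_\f-F_0|^p$) turns the $\chi$--energy (resp.\ $\E^p$--energy) of $\f$ into $n!\int_{\inte P}(-\chi)(-u(s))\,dV(s)$ (resp.\ $n!\int_{\inte P}u(s)^p\,dV(s)$), where $u(s):=\big|(F_\f-F_0)(\nabla G_\f(s))\big|$. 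Normalizing $\sup_X\f=0$ so that $F_\f\le F_0$ and $G_\f\ge G_0$, one has $u(s)=F_0(x)-F_\f(x)\ge0$ with $x=\nabla G_\f(s)$.

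The heart of the matter is a two--sided comparison of $u(s)$ with $G_\f(s)$. Since $x=\nabla G_\f(s)$, the Legendre identity $F_\f(x)=\langle x,s\rangle-G_\f(s)$ and the Fenchel--Young inequality $F_0(x)\ge\langle x,s\rangle-G_0(s)$ give immediately the lower bound
\[
u(s)\ge G_\f(s)-G_0(s).
\]
For the opposite inequality I would write $F_0(x)=\langle x,\sigma\rangle-G_0(\sigma)$ with $\sigma=\nabla F_0(x)$ and invoke the subgradient inequality $\langle x,\sigma-s\rangle\le G_\f(\sigma)-G_\f(s)$ for the convex function $G_\f$, obtaining
\[
u(s)\le G_\f(\sigma(s))-G_0(\sigma(s)),\qquad \sigma(s)=\nabla F_0\big(\nabla G_\f(s)\big).
\]
Because $F_P-C\le F_0\le F_P+C$ by Lemma \ref{L:F0FP}, the function $G_0$ is bounded on $P$, so both bounds compare $u$ with $G_\f$ up to a bounded additive error, and $\min_PG_\f$ differs from $G_0$ only by a bounded amount.

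For the direct implication the lower bound already suffices: monotonicity of $\chi$ applied to $-u\le-(G_\f-G_0)$, together with the finiteness of $n!\int_{\inte P}(-\chi)(-u)\,dV$, yields $\int_P(-\chi)(-(G_\f-G_0))\,dV<+\infty$; since $G_0$ is bounded while $G_\f\to+\infty$ at $\partial P$, this is the defining integral of $L_\chi(P)$ up to the bounded shift $G_0-\min_PG_\f$ inside $\chi$, which gives $G_\f\in L_\chi(P)$. For the converse I would use the upper bound: $u(s)^p\le(G_\f(\sigma(s))+c)^p$ for a constant $c$, and then transport the integral through $\sigma=\nabla F_0\circ\nabla G_\f$; since $\nabla F_0$ is the diffeomorphism of Lemma \ref{L:conv3} with inverse $\nabla G_0$, this change of variables, followed by Minkowski's inequality to absorb the constant $c$, bounds $n!\int_{\inte P}u^p\,dV$ by a constant multiple of $\int_P|G_\f|^p\,dV<+\infty$, so $\f\in\E^p_{tor}(X,\omega)$.

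The hard part will be the upper bound and the transport it requires. The integrand $u$ is not pointwise comparable to $G_\f(s)$: one has $u(s)=G_\f(s)+\rho(s)+O(1)$ with a nonnegative geometric term $\rho(s)=F_P(\nabla G_\f(s))-\langle\nabla G_\f(s),s\rangle$ that can be of the same order as $G_\f(s)$, reflecting the gap between the facet selected by $\nabla G_\f(s)$ and the point $s$ itself. The comparison therefore only closes after passing to the auxiliary point $\sigma(s)$, and the delicate estimate is $\int_{\inte P}|G_\f(\sigma(s))|^p\,dV(s)\lesssim\int_P|G_\f|^p\,dV$. I expect to obtain it from convexity of $G_\f$ alone: along the segment from $s$ to the boundary point $t^\ast$ selected by $\nabla G_\f(s)$ one has $\rho(s)\le2\big(G_\f(m_s)-G_\f(s)\big)$ with $m_s=\tfrac12(s+t^\ast)$, which bounds $u$ by values of $G_\f$ at strictly more interior points; combined with $0\in\inte P$ and the explicit Jacobian $MA_\R(F_0)=n!\det[\partial^2F_0/\partial x_i\partial x_j]$ from Lemma \ref{L:conv3}, this should control the transport. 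The same bounded additive constants are harmless for $L^p$ but are exactly the point at which the general weight $\chi$ of the direct implication must be handled with some care.
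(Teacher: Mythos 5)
Your treatment of the first implication runs on the same engine as the paper's (Legendre duality at the point $x=\nabla G_\f(s)$ plus the change of variables to $\inte P$), but it has a genuine gap exactly at the point you defer in your last sentence: the bounded additive shift inside $\chi$. Your Fenchel--Young bound $u(s)\ge G_\f(s)-G_0(s)$ gives, after the change of variables, $\int_P-\chi\big(G_0-G_\f\big)\,dV<+\infty$, whereas membership in $L_\chi(P)$ requires $\int_P-\chi\big(\min_PG_\f-G_\f\big)\,dV<+\infty$; the two arguments of $\chi$ differ by the bounded function $\min_PG_\f-G_0$, which is positive in general (of the order of the oscillation of $G_0$). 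But $\W$ carries no growth restriction, and finiteness of $\int-\chi(g)\,dV$ does \emph{not} survive replacing $g$ by $g-c$ with $c>0$: for $\chi(t)=-\exp(e^{-t})\in\W$ one has $-\chi(g-c)=\big(-\chi(g)\big)^{e^c}$, and a convex $G$ on $[0,1]$ behaving like $\log\big(\tfrac34\log(1/s)\big)$ near $s=0$ makes $-\chi(\min G-G)\sim s^{-3/4}$ integrable while its $e^c$-th power is not. So ``up to the bounded shift, which gives $G_\f\in L_\chi(P)$'' is not a proof. The repair is precisely the paper's normalization: arrange $F_\f\le F_P\le F_0$ (not merely $F_\f\le F_0$) and run the duality estimate against $F_P$ instead of $F_0$; since $G_P\equiv0$ on $P$, one gets the pointwise inequality $(F_0-F_\f)(\nabla G_\f(s))\ge(F_P-F_\f)(\nabla G_\f(s))\ge G_\f(s)\ge0$ with no additive error, and since then $\min_PG_\f\ge0$, the defining integral of $L_\chi(P)$ is dominated by $\int_P-\chi(-G_\f)\,dV$ with no shift anywhere.

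Two further remarks. First, you apply Corollary \ref{C:Etor} to the unbounded $\f\in\E_{\chi,tor}$; in the paper that corollary rests on Theorem \ref{T:Etor}(i)$\Rightarrow$(iii), whose written proof invokes Proposition \ref{P:Echitor} itself, so as stated your argument is circular within the paper's logical order. It is repairable, either via the Lelong-number chain (i)$\Rightarrow$(v)$\Rightarrow$(vi)$\Rightarrow$(iv)$\Rightarrow$(ii)$\Rightarrow$(iii) of Theorem \ref{T:Etor}, which avoids Proposition \ref{P:Echitor}, or as the paper does: prove the a priori estimate for bounded smooth toric approximants (where full mass is automatic) and pass to the limit by monotone convergence. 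Second, your converse is a genuinely different route from the paper's, which deduces it from two external results (Darvas' inequality $\int_X|\f_1-\f_2|^p\,MA(\f_1)\le C_p\,d_p(\f_1,\f_2)^p$ and Guedj's toric formula $d_p(\f_1,\f_2)^p=\int_P|G_1-G_2|^p\,dV$) applied to smooth approximants; your convex-analytic alternative is more elementary and self-contained. It does work, but only through the midpoint bound $u(s)\le 2G_\f(m_s)-G_\f(s)+O(1)$, not through the transport $\int_P|G_\f(\sigma(s))|^p\,dV\lesssim\int_P|G_\f|^p\,dV$ with $\sigma=\nabla F_0\circ\nabla G_\f$, for which there is no Jacobian control whatsoever ($\nabla G_\f$ can collapse sets of positive measure). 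With the midpoint version the estimate closes: the maximizer $t^*(s)$ may be taken to be a vertex of $P$, and on each region $\{t^*=v\}$ the map $s\mapsto\tfrac12(s+v)$ is affine with Jacobian $2^{-n}$ and image in $P$, so summing over the finitely many vertices gives $\int_PG_\f(m_s)^p\,dV(s)\le N2^n\|G_\f\|_{L^p(P)}^p$. Modulo writing this out and fixing the gap above, your scheme would prove the proposition.
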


\begin{proof} For the first claim, let $\f \in PSH_{tor}(X,\omega)\cap L^\infty(X)$ be such that $F_\f$ is smooth and strictly convex, and $F_\f\leq F_P\leq F_0$. We prove the following a priori estimate:
$$\int_{\inte P}-\chi(-G_\f(s))\,dV(s)\leq\frac{1}{n!}\,\int_X-\chi(\varphi)\,MA(\varphi)\,.$$
Note that $\f\leq0$ on $X$ and $G_\f\geq G_P=0$ on $P$. Moreover, by Proposition \ref{P:bdtorpsh} and Lemma \ref{L:conv3}, $\nabla F_\f:\R^n\to\inte P$ is bijective. Since $F_0\geq F_P$ and $F_\f(x)=\langle x,s\rangle-G_\f(s)$ for $x=\nabla G_\f(s)$, we obtain
\begin{eqnarray*}
(F_0-F_\f)\circ\nabla G_\f(s) &\geq & (F_P-F_\f)\circ\nabla G_\f(s) \\
& =&F_P(\nabla G_\f(s))-\langle \nabla G_\f(s),s\rangle+G_\f(s)\geq G_\f(s)\geq0,
\end{eqnarray*}
where $s\in\inte P$ and the last estimate follows from the definition of $F_P$. Applying Lemmas \ref{L:conv3} and \ref{L:MAR} we get
\begin{align*}
\int_{\inte P}-\chi(-G_\f(s))\,dV(s)&\leq\int_{\inte P}-\chi((F_\f-F_0)\circ\nabla G_\f(s))\,dV(s)\\
&=\frac{1}{n!}\,\int_{\R^n}-\chi(F_\f-F_0)\,MA_\R(F_\f) \\
&=\frac{1}{n!}\,\int_{(\C^\star)^n}-\chi(\f)\,MA(\f)\,.
\end{align*}

Let now $\f \in \E_{\chi,tor}(X,\omega)$ be such that $F_\f\leq F_P-1$. There exists a sequence $\f_j\in PSH_{tor}(X,\omega)\cap L^\infty(X)$ such that $\f_j\searrow\f$, the associated functions $F_{\f_j}$ are smooth and strictly convex, and $F_{\f_j}\leq F_P$. Then
$$
\int_X-\chi(\f_j)\,MA(\f_j)\to\int_X-\chi(\f)\,MA(\f)
$$
as $j\to+\infty$. Since $G_{\f_j}\nearrow G_\f$ it follows by the a priori estimate applied to $\f_j$ and the monotone convergence theorem that
$$\int_P-\chi(-G_\f(s))\,dV(s)\leq\frac{1}{n!}\,\int_X-\chi(\varphi)\,MA(\varphi)<+\infty\,,$$
so $G_\f \in L_{\chi}(P)$. This concludes the proof of the first claim.

\medskip

Conversely, let $p\geq1$ and consider the space of K\"ahler potentials $\mathcal H=\{\f\in \mathcal{C}^{\infty}(X): \,\omega+dd^c\f>0\}$ endowed with the metric
\[d_p(\f_1,\f_2)=\inf\int_{0}^{1}\left(\int_{X}|\dot{\f_t}|^p\,MA(\f_t)\right)^{1/p}dt\,,\,\;\f_1,\f_2\in\mathcal H,\]
where the infimum is taken over all smooth paths $t\in[0,1]\to\f_t\in\mathcal H$ joining $\f_1$ to $\f_2$. It is shown in \cite[Theorem 3]{Dar15} that if $\f_1,\f_2\in\mathcal H$ then
\begin{equation}\label{e:Dar15}
\int_{X}|\f_1-\f_2|^p\,MA(\f_1)\leq C_pd_p(\f_1,\f_2)^p,
\end{equation}
for some constant $C_p>1$ depending on $p$. On the other hand if $\f_1,\f_2\in\mathcal H\cap PSH_{tor}(X,\omega)$ are determined by the convex functions $F_1,F_2$ with Legendre transforms $G_1,G_2$, then, by \cite[Proposition 4.3]{G14},
\begin{equation}\label{e:V14}
d_p(\f_1,\f_2)^p=\int_P|G_1-G_2|^p\,dV\,.
\end{equation}
If $\f\in\mathcal H\cap PSH_{tor}(X,\omega)$ we apply \eqref{e:Dar15} and \eqref{e:V14} with $\f_1=\f$ and $\f_2=0$, and obtain that
\[\int_{X}|\f|^p\,MA(\f) \leq C_p\int_P|G_\f-G_0|^p\,dV\leq 2^{p-1}C_p\left(\|G_\f\|^p_{L^p(P)}+\|G_0\|^p_{L^p(P)} \right).\]

Let now $\f\in PSH_{tor}(X,\omega)$ be such that $G_\f \in L^p(P)$, and take a sequence $\f_j\in\mathcal H\cap PSH_{tor}(X,\omega)$ such that $\f_j\searrow\f$. Then $G_{\f_j}\nearrow G_\f$, so by the above estimate applied to $\f_j$ and dominated convergence we conclude that $\sup_j \int_X |\f_j|^p\,MA(\f_j) <+\infty$. Hence $\f \in \E^p_{tor}(X,\omega)$.
\end{proof}

\begin{Example} \label{E:proj}
Let $X=\P^1$ and $\omega$ be the Fubini-Study K\"ahler form. By Examples \ref{E:Pn} the corresponding convex function is
$F_0(x)=\frac{1}{2}\,\log \left( 1+e^{2x} \right)$ and $P=\overline{F_0'(\R)}=[0,1]$. Let $\f$ be the toric $\omega$-sh function associated to the convex function $F(x):=F_P(x)=\max(x,0)$. Note that the Legendre transform of $F$ is $G=0$ on $[0,1]$, and $dd^c F(\log|z|)$ is the (normalized) Lebesgue measure on the unit circle $S^1  \subset \P^1$.
We consider the sequence of toric $\omega$-sh $\{\f_j$\} defined by the convex functions
$$
F_j(x)=(1-\e_j) F(x)+\e_j \max(x,-C_j),
$$
where $\e_j$ decreases to $0$, while $C_j$ increases to $+\infty$. A straightforward computation yields that the corresponding Legendre transforms are
$$
G_j(s)=\max\big( C_j(\e_j-s), 0\big),\;0\leq s\leq1.
$$
Note that
\begin{equation*}
\begin{split}
\f_j(z)-\f(z)&=-\e_j\log^+|z|+\e_j\max(\log|z|,-C_j)\\
&=\begin{cases}
-\e_jC_j, &\quad |z|<e^{-C_j},\\
\e_j\log|z|, &\quad e^{-C_j}\leq|z|<1,\\
0, &\quad |z|\geq1.
\end{cases}
\end{split}
\end{equation*}
Thus we obtain the following:
\begin{itemize}
\item $\f_j \longrightarrow \f$ in $L^1$ if and only if $\e_j \rightarrow 0$;
\item $\f_j \longrightarrow \f$ in $L^{\infty}$ if and only if $\e_j C_j\rightarrow 0$;
\item $\f_j \longrightarrow \f$ in $W^{1,2}$ (the natural topology on $\E^1(X,\omega)$)
if and only if $\e_j^{2} C_j \rightarrow 0$.
\end{itemize}
\end{Example}

\subsection{Finite moments}

It is tempting to think that one can characterize the condition $\f \in \E_{tor}^q(X,\omega)$
by a finite moment condition, as follows. Let $\f \in \E_{tor}(X,\omega)$, $\mu_{\R}(\f):=MA_\R(F_\f)$,
$0<q<n$, and $q^*=nq/(n-q)$ denote its Sobolev conjugate exponent.
Does one have
$$
\f \in \E_{tor}^{q^*}(X,\omega) \Longleftrightarrow \int_{\R^n} \|x\|^q\, d \mu_\R(\f) <+\infty\; ?
$$

This question was raised by E. Di Nezza, who showed in \cite[Proposition 2.5]{DiN15} that, if $\f \in \E_{tor}(X,\omega)$, $n \geq 2$ and $1\leq q<n$, then
 $$
 \int_{\R^{n}} \|x\|^q \,d\mu_\R(\f) <+\infty \Longrightarrow \f \in \E_{tor}^{q^*}(X,\omega).
 $$

We have the following partial answer to this question in dimension $n=1$, i.e. when $X=\P^1$ and $\omega=\omega_{FS}$:

\begin{Proposition} \label{P:moment}
Let $(X,\om)=(\P^1,\om_{FS})$, $\f \in \E_{tor}(X,\omega)$, and $0<q<1$.

(i) If $q\geq1/2$ and $\int_{\R} |x|^q\,d \mu_\R(\f) <+\infty$ then $\f \in \E_{tor}^{q^*}(X,\omega)$.

\smallskip

(ii) If $\f \in \E_{tor}^{1}(X,\omega)$ then $\int_{\R} |x|^q\,d\mu_\R(\f)<+\infty$ for all $q<1/2$.

\smallskip

(iii) There exists a function $\f \in \E_{tor}^{1}(X,\omega)$ with  $\int_{\R} |x|^{1/2}\,d\mu_\R(\f) =+\infty$.
\end{Proposition}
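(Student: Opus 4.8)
The plan is to transfer everything to the single convex function $G_\f$ on $P=[0,1]$ and solve a one–dimensional Hardy–type problem. First I would record the two dictionary facts available for $n=1$. Since $MA_\R(F_\f)=F_\f''$, Corollary \ref{C:Etor} applied with the nonnegative continuous weight $\chi(x)=|x|^q$ gives the exact identity
$$\int_\R|x|^q\,d\mu_\R(\f)=\int_0^1|G_\f'(s)|^q\,ds,$$
so the moment condition is literally $\int_0^1|G_\f'|^q\,ds<+\infty$. On the other hand, Proposition \ref{P:Echitor} yields $G_\f\in L^p(0,1)\Rightarrow\f\in\E^p_{tor}(X,\om)$ for $p\ge1$, and (forward direction, $\chi(t)=-(-t)^p$) $\f\in\E^p_{tor}(X,\om)\Rightarrow G_\f\in L^p(0,1)$; here $G_\f$ is bounded below on the bounded set $P$, so additive constants are harmless. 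With $q^*=q/(1-q)$, membership in $\E^{q^*}_{tor}(X,\om)$ is thus equivalent to $G_\f\in L^{q^*}(0,1)$, an equivalence available exactly when $q^*\ge1$, i.e. when $q\ge1/2$ --- the first appearance of the threshold. Finally, since $\f\in\E_{tor}$ forces $\nabla F_\f(\R)=\inte P$, the convex function $G_\f$ attains its minimum at an interior point $c\in(0,1)$ (if $G_\f$ is monotone, take $c$ to be the relevant endpoint, which only simplifies the estimates); by the reflection $s\mapsto1-s$ it suffices to work on $(0,c)$, where, after subtracting the minimum, $h:=|G_\f'|=-G_\f'$ is nonnegative and decreasing, $G_\f(s)=\int_s^c h$, and Fubini gives $\int_0^c G_\f\,ds=\int_0^c t\,h(t)\,dt$.

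For part (ii) I would argue as follows. If $\f\in\E^1_{tor}(X,\om)$ then $G_\f\in L^1$, so $\int_0^c t\,h(t)\,dt<+\infty$. Because $h$ is decreasing, $\tfrac12 t^2 h(t)\le\int_0^t sh(s)\,ds\le\int_0^c sh\,ds=:I$, giving the pointwise bound $h(t)\le 2I\,t^{-2}$. Hence $h(t)^q\le(2I)^q t^{-2q}$ is integrable near $0$ precisely when $2q<1$, and the reflected estimate near $1$ yields $\int_0^1|G_\f'|^q\,ds<+\infty$ for all $q<1/2$. For part (iii) I would run this backwards at the borderline: take $h(t)=t^{-2}(\log(1/t))^{-2}$ near $0$, which is decreasing with $\int_0^c t\,h\,dt<+\infty$ but $\int_0^c h^{1/2}\,dt=\int_0^c\frac{dt}{t\log(1/t)}=+\infty$; then $G(s)=\int_s^c h$ is convex (its distributional second derivative is $-h'\ge0$) and lies in $L^1$, so letting $F_\f$ be the Legendre transform of a convex extension of $G$ to $[0,1]$ produces $\f\in\E^1_{tor}(X,\om)$ with $\int_\R|x|^{1/2}\,d\mu_\R(\f)=+\infty$.

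For part (i), the heart of the matter, I would prove the Hardy estimate $\int_0^c G_\f^{q^*}\,ds<+\infty$ from $\int_0^c h^q\,ds<+\infty$ with $q^*\ge1$. Integration by parts (using $G_\f(c)=0$, with the boundary term at $0$ killed by truncating to $(\e,c)$) yields
$$\int_0^c G_\f^{q^*}\,ds=q^*\int_0^c s\,G_\f^{q^*-1}\,h\,ds.$$
When $q>1/2$ we have $q^*>1$, so H\"older with exponents $q^*/(q^*-1)$ and $q^*$ bounds the right side by $\big(\int_0^c G_\f^{q^*}\big)^{(q^*-1)/q^*}\big(\int_0^c (sh)^{q^*}\big)^{1/q^*}$; absorbing the first factor (legitimate after truncation, which makes it finite a priori) leaves $\int_0^c G_\f^{q^*}\le (q^*)^{q^*}\int_0^c(sh)^{q^*}\,ds$. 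The borderline $q=1/2$ ($q^*=1$) needs no H\"older, the identity above being exactly $\int_0^c G_\f=\int_0^c sh$. In both cases it remains to show $\int_0^c(s\,h(s))^{q^*}\,ds<+\infty$, and here I use monotonicity again: with $\rho(s):=\int_{s/2}^s h^q$ one has $\tfrac{s}{2}h(s)^q\le\rho(s)$, so $s\,h(s)\le(2\rho(s))^{1/q}s^{-1/q^*}$ and $(sh)^{q^*}\le 2^{q^*/q}\rho(s)^{1/(1-q)}s^{-1}$; since $\rho$ is bounded by $\int_0^c h^q$ and $1/(1-q)\ge1$, Fubini gives $\int_0^c\rho(s)\,s^{-1}\,ds=(\log2)\int_0^c h^q<+\infty$, which controls the whole integral. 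Together with the reflected bound on $(c,1)$ this gives $G_\f\in L^{q^*}(0,1)$, and since $q^*\ge1$ Proposition \ref{P:Echitor} upgrades it to $\f\in\E^{q^*}_{tor}(X,\om)$.

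I expect the main obstacle to be part (i): the naive pointwise bound $h(s)\lesssim s^{-1/q}$ only produces a logarithmically divergent integral, so the argument genuinely requires the averaged quantity $\rho(s)$ and the Fubini cancellation. It is precisely the two steps needing $q^*\ge1$ --- the converse half of Proposition \ref{P:Echitor} and the H\"older exponent $q^*/(q^*-1)$ --- that force the restriction $q\ge1/2$, and part (iii) shows the threshold is sharp. The remaining care is in justifying the integration by parts and the absorption step for a merely convex $G_\f$, which I would handle by smoothing $G_\f$ and truncating to $(\e,c)$, then passing to the limit by monotone convergence.
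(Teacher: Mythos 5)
Your proposal is correct, and it rests on the same dictionary as the paper --- Corollary \ref{C:Etor} turns the moment condition into $\int_0^1|G_\f'|^q\,ds<+\infty$, and Proposition \ref{P:Echitor} converts $G_\f\in L^{q^*}(P)$ with $q^*=q/(1-q)\geq1$ into $\f\in\E^{q^*}_{tor}(X,\omega)$ --- but the real-variable core differs in all three parts. For (i) the paper avoids your Hardy-type scheme (integration by parts, H\"older with absorption, and the dyadic average $\rho(s)=\int_{s/2}^s h^q$) altogether: normalizing $\min_{[0,1]}G_\f=G_\f(a)=0$ and using that $|G_\f'|$ increases as one moves away from $a$, it writes
$$0\leq G_\f(s)=\left|\int_a^s|G_\f'(t)|^{1-q}\,|G_\f'(t)|^q\,dt\right|\leq|G_\f'(s)|^{1-q}\int_0^1|G_\f'(t)|^q\,dt,$$
raises this to the power $q/(1-q)$ and integrates in $s$, obtaining $\int_0^1 G_\f^{q^*}\,ds\leq\big(\int_0^1|G_\f'|^q\,ds\big)^{1/(1-q)}$ in three lines. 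Your route is longer but sound (the boundary term in the truncated integration by parts has the favorable sign, the absorption is legitimate after truncation to $(\e,c)$, and Fubini indeed gives $\int_0^c\rho(s)s^{-1}ds\leq(\log 2)\int_0^c h^q\,ds$), and your diagnosis that the naive pointwise bound $h(s)\lesssim s^{-1/q}$ loses a logarithm --- forcing the averaged quantity $\rho$ --- is exactly right; the paper sidesteps this issue by estimating $G_\f$ itself rather than $h$. For (ii) your monotonicity bound $h(t)\leq 2\|G_\f\|_{L^1}\,t^{-2}$ is in fact simpler than the paper's argument, which applies H\"older with exponent $1/(1-q)$ to $s^{-q}(-sG_\f')^q$, integrates by parts, and needs an approximation by bounded toric potentials to handle unbounded $\f$; your version works directly on $G_\f$, at the cost of the explicit constant $\frac{2(1-q)}{1-2q}\|G_\f\|_{L^1}^q$ that the paper extracts. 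For (iii) the two constructions are Legendre-dual to one another: the paper prescribes $F_\f(x)=x-2\sqrt{x}/\ln x$ for large $x$ and computes $F_\f''(x)\asymp x^{-3/2}(\ln x)^{-1}$, whereas you prescribe $h=|G_\f'|\asymp t^{-2}(\log(1/t))^{-2}$ near an endpoint of $P$; this is the same example seen from the other side of the duality, with membership in $\E^1_{tor}$ verified by Proposition \ref{P:Echitor} in your case and by a direct computation of $\int_X(-\f)\,MA(\f)$ in the paper's.
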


\begin{proof} Recall that in this case $P=[0,1]$ (see Example \ref{E:proj}).

$(i)$ Replacing $\f$ by $\f+C$ we may assume that $\min_{[0,1]}G_\f=G_\f(a)=0$ for some $a\in[0,1]$. Note that $G_\f$ is convex and finite, so it is differentiable a.e. on $(0,1)$. Let $s,t\in(0,1)$ be such that $G_\f'(s),G_\f'(t)$ exist and $t$ is between $a$ and $s$. Since $G_\f$ is convex and it assumes its minimum at $a$ we have $|G_\f'(t)|\leq|G_\f'(s)|$. It follows that
\begin{equation*}
\begin{split}
0\leq G_\f(s)&=\left|\int_a^sG_\f'(t)\,dt\right|\leq\left|\int_a^s|G_\f'(t)|^{1-q}|G_\f'(t)|^q\,dt\right|\\
&\leq|G_\f'(s)|^{1-q}\left|\int_a^s|G_\f'(t)|^q\,dt\right|\leq|G_\f'(s)|^{1-q}\int_0^1|G_\f'(t)|^q\,dt.
\end{split}
\end{equation*}
Using Corollary \ref{C:Etor} we obtain
\begin{equation*}
\begin{split}
\int_0^1G_\f(s)^{\frac{q}{1-q}}\,ds&\leq\left(\int_0^1|G_\f'(t)|^q\,dt\right)^{\frac{q}{1-q}}\int_0^1|G_\f'(s)|^q\,ds\\
&=\left(\int_0^1|G_\f'(s)|^q\,ds\right)^{\frac{1}{1-q}}=\left(\int_{\R} |x|^q\,d \mu_\R(\f)\right)^{\frac{1}{1-q}}<+\infty.
\end{split}
\end{equation*}
Since $q/(1-q)\geq1$, Proposition \ref{P:Echitor} yields that $\varphi\in \mathcal{E}_{tor}^{\frac{q}{1-q}}(X,\omega)$.

\smallskip

$(ii)$ Let $\f \in \E_{tor}^{1}(X,\omega)$ and $q<1/2$. Then $G_\f\in L^1(P)$ by Proposition \ref{P:Echitor}.  The conclusion follows by showing that if $F_\f\leq F_P$ on $\R$ then
\begin{equation*}
\int_{\R} |x|^q\,d\mu_\R(\f)\leq \frac{2(1-q)}{1-2q}\,\|G_\f\|_{L^1}^{q}.
\end{equation*}
Note that it suffices to prove this in the case when $\f$ is bounded. Indeed, if $\f \in \E_{tor}^{1}(X,\omega)$ is such that $F_\f\leq F_P$ on $\R$, then there exists a sequence of bounded toric $\omega$-psh functions $\f_j\searrow\f$ such that $F_{\f_j}\leq F_P$ on $\R$. Hence $0\leq G_{\f_j}\nearrow G_\f$. Since $\mu_\R(\f_j)\to\mu_\R(\f)$ weakly on $\R$ it follows by the monotone convergence theorem that
\[\int_{\R} |x|^q\,d\mu_\R(\f)\leq\liminf_{j\to\infty}\int_{\R} |x|^q\,d\mu_\R(\f_j)\leq\frac{2(1-q)}{1-2q}\,\|G_\f\|_{L^1}^{q}.\]

Assume that $\f$ is a bounded toric $\omega$-psh function such that $F_\f\leq F_P$. Then $G_\f\geq0$ is a continuous convex function on $[0,1]$, and we fix $a\in[0,1]$ such that $\min_{[0,1]}G_\f=G_\f(a)\geq0$. Applying H\"older's inequality with $p=1/(1-q)$ we get, since $1/(1-pq)=(1-q)/(1-2q)>1$, that
\begin{equation*}
\begin{split}
\int_0^a|G'_\f(s)|^q\,ds & = \int_0^as^{-q}(-sG'_\f(s))^q\,ds\\
& \leq\left(\int_0^a(-sG'_\f(s))\,ds\right)^q\left(\int_0^as^{-pq}\,ds\right)^{\frac{1}{p}}\\
& \leq \left(-aG_\f(a)+\int_0^aG_\f(s)\,ds\right)^q\left(\int_0^1s^{-pq}\,ds\right)^{\frac{1}{p}}\\
& \leq \frac{1-q}{1-2q}\,\|G_{\f}\|_{L^1}^q\,.
\end{split}
\end{equation*}
Similarly,
\begin{equation*}
\begin{split}
\int_a^1|G'_\f(s)|^q\,ds & = \int_a^1(1-s)^{-q}((1-s)G'_\f(s))^q\,ds\\
& \leq\left(\int_a^1(1-s)G'_\f(s)\,ds\right)^q\left(\int_a^1(1-s)^{-pq}\,ds\right)^{\frac{1}{p}}\\
& \leq \left(-(1-a)G_\f(a)+\int_a^1G_\f(s)\,ds\right)^q\left(\int_0^1(1-s)^{-pq}\,ds\right)^{\frac{1}{p}}\\
& \leq \frac{1-q}{1-2q}\,\|G_{\f}\|_{L^1}^q\,.
\end{split}
\end{equation*}
Using the last two estimates and Corollary \ref{C:Etor} we obtain
\[\int_{\R} |x|^q\,d\mu_\R(\f)=\int_0^1|G'_\f(s)|^q\,ds\leq\frac{2(1-q)}{1-2q}\,\|G_{\f}\|_{L^1}^q\,.\]

\smallskip

$(iii)$ Let $\f \in PSH_{tor}(X,\omega)$ be determined by a convex function $F_\f$ defined as follows on the prescribed intervals and smooth on $\R$:
$$
F_\f(x)=\begin{cases}
x-\frac{2\sqrt{x}}{\ln x} & \text{if } x\geq e^3,\\
0 & \text{if } x \leq 0 .
\end{cases}
$$
Note that $F_\f(x)\leq x\leq F_0(x)=\frac{1}{2}\,\log \left( 1+e^{2x} \right)$ for $x\geq0$, and $\f \in \E_{tor}(X,\omega)$ since $F_\f$ has full Monge-Amp\`ere mass. Moreover,
$$\frac{1}{18x^{3/2}\ln x}\leq F''_\f(x)=\frac{1-8(\ln x)^{-2}}{2x^{3/2}\ln x}\leq\frac{1}{2x^{3/2}\ln x}\,,\,\text{ for }x\geq e^3.$$
Therefore
\[\int_\R|x|^{1/2}F''_\f(x)\,dx\geq\frac{1}{18}\,\int_{e^3}^{+\infty}\frac{1}{x\ln x}\,dx=+\infty.\]
Since $\f \in \E_{tor}(X,\omega)$ the measure $MA(\f)$ does not charge polar sets. Hence
\begin{equation*}
\begin{split}
\int_X(-\f)\,MA(\f) & =\int_\R(F_0-F_\f)F''_\f \leq C+\int_{e^3}^{+\infty}\frac{2\sqrt{x}}{\ln x}\,F''_\f(x)\,dx \\
 & \leq C+\int_{e^3}^{+\infty}\frac{1}{x(\ln x)^2}\,dx<+\infty,
\end{split}
\end{equation*}
for some constant $C$, which implies that  $\f \in \E^1_{tor}(X,\omega)$.
\end{proof}

\section{Higher regularity}\label{S:LogLip}

\subsection{Continuous toric functions}
These can be characterized as follows:

\begin{Proposition}\label{P:bdtorpsh}
Let $\varphi\in PSH_{tor}(X,\omega)$. The following are equivalent:

(i) $\varphi$ is continuous on $X$;

(ii) $\varphi\in L^\infty(X)$;

(iii) $F_P-C\leq F_\varphi\leq F_P+C$ for some constant $C\geq0$;

(iv) $G_P-C\leq G_\varphi\leq G_P+C$ for some constant $C\geq0$.\\
Moreover, we have in this case $\|F_\varphi-F_P\|_{L^\infty(\R^n)}=\|G_\varphi\|_{L^\infty(P)}$.
\end{Proposition}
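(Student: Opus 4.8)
The plan is to establish the four-way equivalence by exploiting the Legendre duality dictionary already developed in Section \ref{S:convex}, reducing everything to convex-analytic statements about $F_\varphi$ and $G_\varphi$. The core observation is that the Legendre transform is an order-reversing involution, so a two-sided bound on $F_\varphi-F_P$ translates directly into a two-sided bound on $G_\varphi-G_P$, with the bounding constants preserved. I would arrange the proof as a cycle, say $(i)\Rightarrow(ii)\Rightarrow(iii)\Leftrightarrow(iv)\Rightarrow(i)$, and treat the final norm equality separately.

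First I would dispatch $(i)\Rightarrow(ii)$: a continuous function on the compact space $X$ is automatically bounded. For $(ii)\Rightarrow(iii)$, I would use the defining relation $F_\varphi\circ L=F_0\circ L+\varphi$ together with Lemma \ref{L:F0FP}, which gives $|F_0-F_P|\leq C$ on $\R^n$. If $\|\varphi\|_{L^\infty(X)}\leq C'$, then on $(\C^\star)^n$ we get $|F_\varphi-F_0|\leq C'$, and combining with Lemma \ref{L:F0FP} yields $|F_\varphi-F_P|\leq C+C'$, which is $(iii)$. The equivalence $(iii)\Leftrightarrow(iv)$ is the heart of the matter: here I would invoke the growth lemmas, namely that $F\leq F_P+C$ is equivalent to $G\geq G_P-C$ and dually $F\geq F_P-C$ is equivalent to $G\leq G_P+C$, which follow from the order-reversing property of the Legendre transform combined with the explicit formula $G_P=0$ on $P$ and $+\infty$ off $P$ recorded just before Lemma \ref{L:conv2}. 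Since $G_P$ is infinite outside $P$, any finite two-sided bound forces $G_\varphi$ to live on $P$, and there the bound reads $|G_\varphi|\leq C$ on $P$.

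For $(iv)\Rightarrow(i)$ I would argue that $(iv)$ forces $G_\varphi$ to be a bounded convex function on the compact polytope $P$, hence continuous on $\inte P$, and that via the bijection $\nabla F_\varphi:\R^n\to\inte P$ of Lemma \ref{L:conv3} the bound $|F_\varphi-F_P|\leq C$ gives that $\varphi=(F_\varphi-F_0)\circ L$ extends continuously across the boundary divisor $X\setminus(\C^\star)^n$; the uniform two-sided control on $F_\varphi-F_P$ near infinity in $\R^n$ corresponds precisely to continuity at the toric boundary points, so $\varphi$ is continuous on all of $X$. Finally, the norm identity $\|F_\varphi-F_P\|_{L^\infty(\R^n)}=\|G_\varphi\|_{L^\infty(P)}$ would follow by taking the infimum over admissible constants $C$ on each side of $(iii)$ and $(iv)$: since the smallest $C$ making $|F_\varphi-F_P|\leq C$ hold equals the smallest $C$ making $|G_\varphi-G_P|=|G_\varphi|\leq C$ hold (the duality preserves the constant exactly), both equal the respective sup norms.

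I expect the main obstacle to be the rigorous passage from the uniform estimate $(iv)$ to genuine continuity of $\varphi$ at the toric points in step $(iv)\Rightarrow(i)$. Away from the boundary, on $(\C^\star)^n$, continuity of $\varphi$ is immediate since $F_\varphi$ is a finite convex function and hence continuous. The delicate point is extending continuity across the compactifying divisor: one must check that the uniform comparison with $F_P$ near infinity matches the local toric coordinates at each fixed point $p$, which requires the coordinate descriptions of the charts $\Phi_j$ around the toric points exactly as set up in the proof of Theorem \ref{T:Etor}. I would lean on that same local analysis to verify that the bounded behavior of $G_\varphi$ near each vertex and facet of $P$ translates into boundedness and continuity of $\varphi$ in the corresponding chart, thereby closing the cycle.
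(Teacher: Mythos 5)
Your cycle reproduces the paper's argument almost verbatim on the formal links: $(i)\Rightarrow(ii)$ is compactness of $X$; $(ii)\Leftrightarrow(iii)$ is exactly Lemma \ref{L:F0FP} combined with $F_\varphi\circ L=F_0\circ L+\varphi$; and $(iii)\Leftrightarrow(iv)$ with the constant $C$ preserved in both directions (hence, since $G_P=0$ on $P$, the norm identity) is precisely how the paper obtains the last claim. The problem is the closing implication $(iv)\Rightarrow(i)$ --- equivalently $(ii)\Rightarrow(i)$, the only non-formal content of the proposition --- and there your proposal has a genuine gap.

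You assert that ``the uniform two-sided control on $F_\varphi-F_P$ near infinity corresponds precisely to continuity at the toric boundary points,'' but that is the statement to be proved, not a known correspondence: the two-sided bound is equivalent to \emph{boundedness} of $\varphi$ (this is just $(ii)\Leftrightarrow(iii)$ again), and boundedness of a psh function does not imply continuity --- there exist bounded psh functions on the disc that are discontinuous, since upper semicontinuity is free but \emph{lower} semicontinuity can fail. What rules this out here is the toric structure, and your sketch never supplies the mechanism. The paper closes the step with one specific fact: in each chart $\Phi_j:V_j\to\C^n$ around a toric point (the charts you correctly borrow from the proof of Theorem \ref{T:Etor}), the function $F^j_\varphi\circ L$ is a \emph{polyradial} psh function on $\C^n$, bounded on compact sets because $\varphi$ is bounded, and bounded polyradial psh functions on $\C^n$ are continuous. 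Concretely, $F^j_\varphi$ is convex, nondecreasing in each variable and locally bounded below, so it converges, locally uniformly in the remaining variables, as any group of coordinates tends to $-\infty$; this gives continuity of $F^j_\varphi\circ L$ across $\{\zeta_1\cdots\zeta_n=0\}$ and hence continuity of $\varphi=\big(F^j_\varphi-F^j_0\big)\circ L\circ\Phi_j$ on $V_j$. Some such statement must be proved or quoted; without it the lower semicontinuity of $\varphi$ along the divisor $X\setminus(\C^\star)^n$ is simply unaddressed. A secondary error in the same step: you invoke ``the bijection $\nabla F_\varphi:\R^n\to\inte P$ of Lemma \ref{L:conv3},'' but that lemma requires a \emph{smooth, strictly convex} function; for a general bounded toric $\varphi$ the gradient map need not be injective (take $F_\varphi=F_P$, whose gradient is constant on open cones), so this tool is unavailable and, in any case, does not by itself yield continuity at the boundary.
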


\begin{proof} Assume that $\varphi \in PSH_{tor}(X,\omega)$ is bounded. Using the notation from the proof of Theorem \ref{T:Etor} we let $p_1,\ldots,p_N$ be the toric points of $X$ and $p_j\in V_j\subset X$ be open sets with biholomorphic maps $\Phi_j:V_j\to\C^n$ such that $\Phi_j(p_j)=0$, $(\C^\star)^n\subset V_j$, $\Phi_j((\C^\star)^n)=(\C^\star)^n$, $X=V_1\cup\ldots\cup V_N$. If $L(\zeta)=(\log|\zeta_1|,\ldots,\log|\zeta_n|)$, $\zeta\in\C^n=\Phi_j(V_j)$, there exist convex functions $F_\f^j,F_0^j:\R^n\to\R$ such that $F_\f^j\circ L,F_0^j\circ L$ extend to a psh function, respectively to a smooth psh function, on $\C^n$, and $F_\f^j\circ L=F_0^j\circ L+\varphi\circ\Phi_j^{-1}$ on $(\C^\star)^n$. Since $\f$ is bounded and polyradial psh functions on $\C^n$ are continuous, this shows that $\varphi$ is continuous on each $V_j$, hence on $X$.

Using Lemma \ref{L:F0FP} we see immediately that $(ii)\Leftrightarrow(iii)$, while $(iii)\Leftrightarrow(iv)$ follows from the definition of the Legendre transform. Moreover, if $(iii)$ holds with a constant $C$ then $(iv)$ holds with the same constant, and vice versa. This implies the last claim.
\end{proof}

We note that assertion $(iii)$ in Proposition \ref{P:bdtorpsh} is equivalent to the condition that $G=+\infty$ on $\R^n\setminus P$ and $G$ is bounded above on $P$.

\subsection{Log-Lipschitz Legendre transforms}

Recall that a continuous function $u:\Omega \subset \R^n \rightarrow \R$ is Log-Lipschitz
if its modulus of continuity $\omega_u(x,r)$ is locally bounded from above by $C r \log r$.

In order to prove Theorem C we need the following preliminary results.

\begin{Lemma}\label{L:integral} Let $n\geq1$ and
\[I(\lambda)=\int_0^1(t^{n-1}+\lambda^{-1})\log(1+\lambda^{-1}t^{-n+1})\,dt\,,\,\;\lambda>0.\]
If $0<x\leq1/e$ and $\lambda_x=(n+3)x\log\frac{1}{x}$ then $xI(\lambda_x)<1$.
\end{Lemma}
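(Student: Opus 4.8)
The plan is to reduce $I(\lambda)$ to elementary integrals via a single logarithmic identity and then track the leading term under the substitution $\lambda=\lambda_x$. Writing $a=\lambda^{-1}$, the first step is the factorization
\[
\log\bigl(1+a\,t^{-(n-1)}\bigr)=\log\bigl(a+t^{n-1}\bigr)+(n-1)\log\tfrac1t,
\]
valid for all $n\geq1$ (the last term vanishes when $n=1$, and it also exhibits the integrability of the logarithmic singularity at $t=0$ when $n\geq2$). Since $t^{n-1}\leq1$ on $[0,1]$, I would bound $\log(a+t^{n-1})\leq\log(a+1)$ and insert this into the integral. Using the standard values $\int_0^1 t^{n-1}\,dt=1/n$, $\int_0^1\log\tfrac1t\,dt=1$ and $\int_0^1 t^{n-1}\log\tfrac1t\,dt=1/n^2$, this produces the clean estimate
\[
I(\lambda)\leq\log(a+1)\Bigl(a+\tfrac1n\Bigr)+(n-1)\Bigl(a+\tfrac{1}{n^2}\Bigr).
\]

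Next I would substitute $a=\lambda_x^{-1}=\frac{1}{(n+3)\,x\log(1/x)}$, so that the dominant quantity becomes $xa=\frac{1}{(n+3)\log(1/x)}$. Two elementary facts about $x\in(0,1/e]$ do the rest: first, $x\mapsto x\log\tfrac1x$ is increasing on $(0,1/e]$ (its derivative $-\log x-1$ is nonnegative there), so $x\log\tfrac1x\leq1/e$; second, $a+1\leq1/x$, equivalently $\frac{1}{(n+3)\log(1/x)}+x\leq1$, which holds because the two summands are at most $1/4$ and $1/e$, whence $\log(a+1)\leq\log\tfrac1x$. Multiplying the displayed bound by $x$ and applying $\log(1/x)\geq1$, $x\log\tfrac1x\leq1/e$ and $x\leq1/e$ yields
\[
x\,I(\lambda_x)\leq\frac{n}{n+3}+\frac{2n-1}{e\,n^2}.
\]

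Finally I would verify the numerical inequality $\frac{n}{n+3}+\frac{2n-1}{en^2}<1$ for every integer $n\geq1$: it is equivalent to $(2n-1)(n+3)<3e\,n^2$, i.e.\ $2n^2+5n-3<3e\,n^2$, which follows from $3e>8$ together with $6n^2-5n+3>0$ (the latter quadratic has negative discriminant, hence is positive for all $n$). This closes the argument.

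The conceptual heart — and the thing to get right — is the recognition that the summand carrying the factor $a$ (coming from $\lambda^{-1}$ in the integrand) is the leading one, scaling like $a\log a\sim\lambda^{-1}\log(1/\lambda)$, and that under $\lambda=\lambda_x$ it contributes exactly $\frac{n}{n+3}$, leaving a residual cushion of $\frac{3}{n+3}$. The prefactor $(n+3)$ is calibrated precisely so that this cushion dominates all lower-order contributions \emph{uniformly in $n$}; a smaller constant would fail for small $n$. Consequently the delicate point is not any single hard estimate but the uniform bookkeeping — especially securing $\log(a+1)\leq\log\tfrac1x$ and keeping the $(n-1)$-terms subordinate — which must be done with the sharp elementary bounds above rather than crude ones.
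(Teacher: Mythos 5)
Your proof is correct and takes essentially the same route as the paper: your intermediate bound $I(\lambda)\le\log(a+1)\bigl(a+\tfrac1n\bigr)+(n-1)\bigl(a+\tfrac1{n^2}\bigr)$ (with $a=\lambda^{-1}$) is exactly the paper's first estimate, and both arguments then hinge on $\log(a+1)\le\log\tfrac1x$ combined with $x\log\tfrac1x\le\tfrac1e$ and $\log\tfrac1x\ge1$. The only differences are cosmetic: the paper deduces $\log\frac{1+\lambda_x}{\lambda_x}<\log\frac1x$ via the term $-\log\log\frac1x\le 0$ rather than your observation $a+1\le\tfrac1x$, and it uses the slightly cruder residual $\tfrac{2}{ne}$ where you keep $\tfrac{2n-1}{en^2}$.
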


\begin{proof}
We have
\[I(\lambda) \leq \int_0^1(t^{n-1}+\lambda^{-1})\log\frac{1+\lambda}{\lambda t^{n-1}}\,dt=\left(\frac{1}{\lambda}+\frac{1}{n}\right)\log\frac{1+\lambda}{\lambda}+\frac{n-1}{\lambda}+\frac{n-1}{n^2}\,.\]
Since $x\leq 1/e$ we have that $1+\lambda_x\leq 1+(n+3)/e$ and
\[\log\frac{1+\lambda_x}{\lambda_x}\leq\log\frac{1}{x}+\log\left(\frac{1}{n+3}+\frac{1}{e}\right)-\log\log\frac{1}{x}<\log\frac{1}{x}\,.\]
Therefore
\begin{equation*}
\begin{split}
I(\lambda_x) &< \left(\frac{1}{\lambda_x}+\frac{1}{n}\right)\log\frac{1}{x}+\frac{n-1}{\lambda_x}+\frac{1}{n}\\
&= \frac{1}{(n+3)x}+\frac{(n-1)}{(n+3)x\log\frac{1}{x}}+\frac{1}{n}\log\frac{1}{x}+\frac{1}{n}\\
&\leq \frac{1}{x}\left(\frac{n}{n+3}+\frac{x}{n}\log\frac{1}{x}+\frac{x}{n}\right)\leq \frac{1}{x}\left(\frac{n}{n+3}+\frac{2}{ne}\right)<\frac{1}{x}\,.
\end{split}
\end{equation*}
\end{proof}

\begin{Proposition}\label{P:Log-Lip}
Let $P\subset\R^n$ be a compact convex polytope and $f:\inte P\to\R$ be a locally Lipschitz function. If $e^{\varepsilon\|\nabla f\|}\in L^1(P)$ for some $\varepsilon>0$ then $f$ extends to a Log-Lipschitz function on $P$.
\end{Proposition}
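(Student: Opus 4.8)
The plan is to establish a uniform modulus-of-continuity estimate of Log-Lipschitz type for $f$ on $\inte P$ and then extend by uniform continuity. Concretely, I would show that there are constants $C>0$ and $r_0\in(0,1/e]$ such that
\[
|f(p)-f(q)|\le C\,|p-q|\,\log\frac{1}{|p-q|}\qquad\text{whenever } p,q\in\inte P,\ |p-q|\le r_0.
\]
Since the right-hand side tends to $0$ as $|p-q|\to0$, this forces $f$ to be uniformly continuous on $\inte P$, hence to admit a unique continuous extension to the compact set $P$, and the inequality then passes to the closure by continuity, giving the Log-Lipschitz extension claimed. Thus the whole problem reduces to a quantitative oscillation bound on small scales.

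To estimate oscillation I would use the classical potential representation coming from the fundamental theorem of calculus along segments: for a bounded convex set $\Omega$ and $z\in\Omega$,
\[
\Big|f(z)-\frac{1}{|\Omega|}\int_\Omega f\,dy\Big|\le c_n\,\frac{(\operatorname{diam}\Omega)^n}{|\Omega|}\int_\Omega\frac{|\nabla f(y)|}{\|y-z\|^{n-1}}\,dy.
\]
Because $P$ is a convex polytope, every segment joining two of its points stays in $P$, so this representation is available with $\Omega$ a small ball intersected with $P$ around each of $p$ and $q$; convexity of $P$ (and its having only finitely many vertices) also provides a uniform lower density bound $|B(z,r)\cap P|\ge c\,r^n$, which is exactly what keeps all constants uniform up to $\partial P$. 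Comparing $f(p)$ and $f(q)$ through overlapping averages at scale $r\simeq|p-q|$ then reduces everything to bounding, in polar coordinates centred at the pole and after rescaling the radius to $[0,1]$, a radial average of the form $\int_{S^{n-1}}\int_0^1\|\nabla f(z+r t\theta)\|\,dt\,d\sigma(\theta)$, the prefactor $r$ being what will ultimately produce the linear factor in the modulus.

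The heart of the matter is to convert the hypothesis $e^{\varepsilon\|\nabla f\|}\in L^1(P)$ into a bound for this radial average. Here I would invoke the Fenchel--Young duality between the Young functions $a\mapsto e^{a}$ and $s\mapsto (1+s)\log(1+s)$, equivalently the elementary inequality $a\le\log(1+c^{-1})+c(e^{a}-1)$, applied at a scale $\lambda>0$ with the weight $c=\lambda t^{n-1}$ matched to the Jacobian $t^{n-1}$ of polar coordinates. This splits the average into two pieces: an \emph{exponential piece}, controlled by $\int_P e^{\lambda\|\nabla f\|}\le\int_P e^{\varepsilon\|\nabla f\|}$ for $\lambda\le\varepsilon$ (with a local H\"older refinement $e^{\lambda g}=(e^{\varepsilon g})^{\lambda/\varepsilon}$ to keep it bounded on small balls), and a purely \emph{deterministic piece} which is, up to the constant $\sigma(S^{n-1})$, the integral $I(\lambda)$ of Lemma \ref{L:integral}. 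Choosing the scale $\lambda=\lambda_x$ with $x\simeq|p-q|$ as prescribed there, the estimate $x\,I(\lambda_x)<1$ is precisely the normalization that keeps the deterministic cost at the level of the target quantity, and reassembling the two pieces yields $\omega_f(r)\lesssim r\log\frac1r$.

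The step I expect to be the main obstacle is exactly this balancing of scales. A naive choice such as $\lambda=\varepsilon$ fixed leaves a non-vanishing constant and only proves boundedness of $f$, whereas choosing $\lambda$ too small makes the deterministic integral $I(\lambda)$ blow up too fast; only in a narrow window does everything close. Lemma \ref{L:integral} is engineered to hit this window, and the calibration $\lambda_x=(n+3)x\log\frac1x$ together with $x\,I(\lambda_x)<1$ is what turns mere exponential integrability of the gradient into the sharp $r\log\frac1r$ modulus. A secondary, more technical difficulty is guaranteeing that the constants in the potential representation and in the H\"older refinement stay uniform as $p,q$ approach $\partial P$; I would control this by the interior cone condition and the uniform volume density that the convex polytope $P$ enjoys near each of its faces and vertices.
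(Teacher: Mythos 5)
Your plan has the same architecture as the paper's proof: reduce to a uniform Log-Lipschitz oscillation bound on $\inte P$ and extend by uniform continuity; compare point values with averages via the fundamental theorem of calculus along segments (the paper averages over a cone whose base is a cross-section $A$ of the midpoint hyperplane rather than over overlapping balls, an immaterial difference); split the resulting radial integral by the duality between $e^y$ and $(1+s)\log(1+s)$; control one piece by $\int_P e^{\varepsilon\|\nabla f\|}\,dV$ and the other via Lemma \ref{L:integral}; and use convexity of $P$ to keep constants uniform up to $\partial P$. The problem is the scale calibration, which is exactly the step you yourself single out as the crux: as stated, your choice fails in every dimension $n\geq2$.

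With your additive splitting $a\leq\log(1+c^{-1})+c(e^{a}-1)$ applied with $a=\varepsilon\|\nabla f\|$ and $c=\lambda t^{n-1}$, integrating over $S^{n-1}\times[0,1]$ and undoing the rescaling $\rho=rt$ turns the exponential piece into
\[
\lambda\int_{S^{n-1}}\int_0^1 t^{n-1}e^{\varepsilon\|\nabla f(z+rt\theta)\|}\,dt\,d\sigma
=\lambda r^{-n}\int_{B(z,r)\cap P}e^{\varepsilon\|\nabla f\|}\,dV\leq\lambda r^{-n}M,
\qquad M:=\big\|e^{\varepsilon\|\nabla f\|}\big\|_{L^1(P)},
\]
and nothing better than $M$ is available for the local integral, since the exponential mass may concentrate near $z$. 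The polar Jacobian thus forces the full volume factor $r^{-n}$, so keeping this piece at level $\log(1/r)$ requires $\lambda\lesssim r^{n}\log(1/r)$; that is, Lemma \ref{L:integral} must be invoked at the \emph{volume} scale $x\simeq|p-q|^{n}$, not at $x\simeq|p-q|$ as you propose. With $\lambda=\lambda_x$, $x\simeq r=|p-q|$, the exponential piece contributes, after the prefactor $r$, an oscillation of order $Mr^{2-n}\log(1/r)$, which blows up for all $n\geq2$ (already $M\log(1/r)$ for $n=2$); your scheme closes only for $n=1$, where $r$ and $r^{n}$ coincide and the paper anyway has a short Jensen argument. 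Your fallback of replacing $\varepsilon$ by $\lambda$ in the exponent via $e^{\lambda g}=(e^{\varepsilon g})^{\lambda/\varepsilon}$ does not rescue the same choice of $\lambda$ either: one must then divide the whole estimate by $\lambda$, and the deterministic piece alone, $\lambda^{-1}\int_0^1\log(1+\lambda^{-1}t^{-n+1})\,dt\geq\log(1/\lambda)/\lambda\simeq1/r$, leaves after the prefactor $r$ only an $O(1)$ bound, i.e.\ no modulus of continuity at all. The reason the paper can afford the larger parameter $x=aV_{n-1}(A)$ (which can indeed be of order $|p-q|$) is that it uses the \emph{multiplicative} Orlicz--H\"older inequality with respect to the unnormalized cone measure $d\mu=at^{n-1}dt\,dV_{n-1}$: the gradient factor enters as an Orlicz norm bounded by $\max(1,M)$ independently of the scale, and all the smallness is carried by $\|t^{-n+1}\|_{\chi}\leq\lambda_0\simeq x\log(1/x)$. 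So either switch to the multiplicative inequality as in the paper, or keep your additive splitting and recalibrate to $x\simeq|p-q|^{n}$; the target modulus is unaffected, since $\log(r^{-n})=n\log(1/r)$.
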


\begin{proof}
If $n=1$ then $P=[a,b]$, and for $a<s_1<s_2<b$ it follows by Jensen's inequality that
\begin{equation*}
\begin{split}
|f(s_2)-f(s_1)| & \leq \frac{1}{\varepsilon}\int_{s_1}^{s_2}\varepsilon|f'(t)|\,dt\leq \frac{s_2-s_1}{\varepsilon}\,\log\left(\frac{1}{s_2-s_1}\,\int_{s_1}^{s_2}e^{\varepsilon|f'(t|}\,dt\right)\\
& \leq \frac{s_2-s_1}{\varepsilon}\,\log\frac{\|e^{\varepsilon|f'|}\|_{L^1[a,b]}}{s_2-s_1}\,.
\end{split}
\end{equation*}

We consider next the case $n>1$. Since $P$ is convex there exists a constant $c\in(0,1)$ with the following property: for every $s_1,s_2\in P$ there exists a compact subset $A$ of the hyperplane perpendicular to the segment $[s_1,s_2]$ at its midpoint such that $A\subset\inte P$ and
\begin{equation}\label{e:cones}
c\|s_1-s_2\|^{n-1}\leq V_{n-1}(A)\leq1,\,\;\|s_1-\sigma\|\leq\frac{\|s_1-s_2\|}{2c}\text{ for all } \sigma\in A,
\end{equation}
where $V_{n-1}(A)$ is the $(n-1)$-dimensional Hausdorff measure of $A$. Note that we then have $\|s_2-\sigma\|\leq\frac{\|s_1-s_2\|}{2c}$ for all $\sigma\in A$.

We will show that if $s_1,s_2\in \inte P$ are such that $\|s_1-s_2\|\leq\frac{2}{e}$ then
\begin{equation}\label{e:Log-Lip1}
|f(s_1)-f(s_2)|\leq 2C\|s_1-s_2\|\log\frac{2}{c\|s_1-s_2\|^n}\,,
\end{equation}
where
\[C=\frac{n+3}{\varepsilon c}\,\max\left(1,\big\|e^{\varepsilon\|\nabla f\|}\big\|_{L^1(P)}\right).\]
This clearly implies that $f$ extends to a Log-Lipschitz function on $P$.

Fix $s_1,s_2\in \inte P$ with $\|s_1-s_2\|\leq\frac{2}{e}$ and let $A$ be a set as in \eqref{e:cones}. Note that \eqref{e:Log-Lip1} follows if we prove that
\begin{equation}\label{e:Log-Lip2}
\left|f(s_1)-\frac{1}{V_{n-1}(A)}\int_Af\,dV_{n-1}\right|\leq C\|s_1-s_2\|\log\frac{2}{c\|s_1-s_2\|^n}\,,
\end{equation}
since the same holds with $s_2$ in place of $s_1$.

We may assume that $s_1=(0,a)\in\R^{n-1}\times\R$, with $a>0$, and that $A=B\times\{0\}\subset\R^{n-1}\times\{0\}$. Then $\|s_1-s_2\|=2a$. We set $\sigma=(\sigma',0)\in A$ for $\sigma'\in B$.
Since $f$ is locally Lipschitz on $\inte P$ and, by \eqref{e:cones},  $\|s_1-\sigma\|\leq a/c$ for $\sigma\in A$, we obtain
\begin{equation}\label{e:Log-Lip3}
\begin{split}
 \big|V_{n-1}(B) & f(s_1)  - \int_Bf(\sigma)\,dV_{n-1}(\sigma')\big|=\\
& \big|\int_B\int_0^1\langle\nabla f((1-t)s_1+t\sigma),s_1-\sigma\rangle\,dt\,dV_{n-1}(\sigma')\big|\\
& \leq \int_B\int_0^1\|\nabla f((1-t)s_1+t\sigma)\|\,\|s_1-\sigma\|\,dt\,dV_{n-1}(\sigma')\\
& \leq\frac{a}{c}\,\int_B\int_0^1\|\nabla f((1-t)s_1+t\sigma)\|\,dt\,dV_{n-1}(\sigma')\\
& =\frac{1}{\varepsilon c}\,\int_B\int_0^1\varepsilon\|\nabla f((1-t)s_1+t\sigma)\|\,t^{-n+1}\,d\mu,
\end{split}
\end{equation}
where $\mu$ is the measure on $B\times[0,1]$ given by $d\mu=at^{n-1}dt\,dV_{n-1}$.

Consider the weight $\chi(x)=(x+1)\log(x+1)-x$, $x\geq0$, with conjugate weight (Legendre transform) $\chi^\star(y)=e^y-y-1$, $y\geq0$, and the Orlicz spaces $L^\chi(B\times[0,1],\mu)$ and $L^{\chi^\star}(B\times[0,1],\mu)$. Recall that the norm on $L^\chi(B\times[0,1],\mu)$ is given by
\[\|g\|_\chi:=\inf\left\{\lambda>0:\,\int_{B\times[0,1]}\chi(|g|/\lambda)\,d\mu\leq1\right\},\]
and one has that $\|g\|_\chi\leq\max\left(1,\int_{B\times[0,1]}\chi(|g|)\,d\mu\right)$.

Estimating the last integral in \eqref{e:Log-Lip3} by the multiplicative H\"older-Young inequality (see \cite[Proposition 2.15]{BBEGZ} or \cite{RR}) we get
\begin{equation}\label{e:Log-Lip4}
\begin{split}
 \big|V_{n-1}(B)f(s_1)  - & \int_Bf(\sigma)\,dV_{n-1}(\sigma')\big|\\
& \leq\frac{2}{\varepsilon c}\,\big\|\varepsilon\|\nabla f((1-t)s_1+t\sigma)\|\big\|_{\chi^\star}\|t^{-n+1}\|_\chi\,.
\end{split}
\end{equation}
If $\Gamma$ is the cone in $\R^n$ with vertex $s_1$ and base $A$ then
\[\int_\Gamma e^{\varepsilon\|\nabla f\|}\,dV_n=\int_{B\times[0,1]}e^{\varepsilon\|\nabla f((1-t)s_1+t\sigma)\|}\,d\mu.\]
Since $\chi^\star(y)<e^y$ it follows that
\begin{equation*}
\begin{split}
\big\|\varepsilon\|\nabla f((1-t)s_1+t\sigma)\|\big\|_{\chi^\star} & \leq \max\left(1,\int_{B\times[0,1]}e^{\varepsilon\|\nabla f((1-t)s_1+t\sigma)\|}\,d\mu\right)\\
& \leq \max\left(1,\int_Pe^{\varepsilon\|\nabla f\|}\,dV_n\right).
\end{split}
\end{equation*}

It remains to estimate the second Orlicz norm in \eqref{e:Log-Lip4}. We have
\begin{equation*}
\begin{split}
\int_{B\times[0,1]} & \chi(t^{-n+1}/\lambda)\,d\mu= \\
& =\int_B\int_0^1\left[\big(\frac{t^{-n+1}}{\lambda}+1\big)\log\big(\frac{t^{-n+1}}{\lambda}+1\big)-\frac{t^{-n+1}}{\lambda}\right]at^{n-1}\,dt\,dV_{n-1}\,, \\
& \leq aV_{n-1}(B)\,I(\lambda),
\end{split}
\end{equation*}
where $I(\lambda)$ is the function from Lemma \ref{L:integral}. Note that
\[aV_{n-1}(B)\leq\|s_1-s_2\|/2\leq1/e\,,\]
since $c\|s_1-s_2\|^{n-1}\leq V_{n-1}(B)=V_{n-1}(A)\leq1$ by \eqref{e:cones}. Lemma 4.2 implies that
\[aV_{n-1}(B)\,I(\lambda_0)\leq 1\,,\,\text{ if } \lambda_0=(n+3)aV_{n-1}(B)\log\frac{1}{aV_{n-1}(B)}\,,\]
hence
\begin{equation*}
\begin{split}
\|t^{-n+1}\|_\chi & =\inf\left\{\lambda>0:\,\int_{B\times[0,1]}\chi(t^{-n+1}/\lambda)\,d\mu\leq1\right\}\\
& \leq\lambda_0\leq(n+3)aV_{n-1}(B)\log\frac{1}{c\|s_1-s_2\|^{n-1}a}\,.
\end{split}
\end{equation*}

By \eqref{e:Log-Lip4} we conclude that
\begin{equation*}
\begin{split}
 \big|f(s_1)  - &\frac{1}{V_{n-1}(B)} \int_Bf(\sigma)\,dV_{n-1}(\sigma')\big|\leq\\
& \frac{2(n+3)}{\varepsilon c}\,\max\left(1,\big\|e^{\varepsilon\|\nabla f\|}\big\|_{L^1(P)}\right)a\log\frac{1}{c\|s_1-s_2\|^{n-1}a}\,.
\end{split}
\end{equation*}
This yields \eqref{e:Log-Lip2}, since $a=\|s_1-s_2\|/2$, and the proof is finished.
\end{proof}

We now prove Theorem C stated in the Introduction.

\begin{Theorem} \label{thm:holder}
Let $\f \in \E_{tor}(X,\omega)$. The following properties are equivalent:

(i) There exists $\e>0$ such that $\exp(-\e PSH_{tor}(X,\omega) ) \subset L^1(MA(\f))$;

\smallskip

(ii) There exists $\e>0$ such that $e^{\e \|\nabla G_\f\|}\in L^1(P)$;

\smallskip

(iii)  The function $G_\f$ is Log-Lipschitz on $P$.

\smallskip

(iv) There exists a constant $C>0$ such that
$\|\nabla G_\f(s)\|\leq C\log\frac{C}{\dist(s,\partial P)}$
holds for almost all $s\in\inte P$.
\end{Theorem}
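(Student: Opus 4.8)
The plan is to prove the cycle of implications $(ii)\Rightarrow(iii)\Rightarrow(iv)\Rightarrow(ii)$, which only involve the convex function $G_\f$ on the polytope $P$, and then to close the argument with the equivalence $(i)\Leftrightarrow(ii)$, which is where the toric dictionary enters through the change of variables of Corollary \ref{C:Etor}. Since $\f\in\E_{tor}(X,\omega)$, Theorem \ref{T:Etor} guarantees that $G_\f$ is finite, hence convex and locally Lipschitz, on $\inte P$, so all the manipulations below are legitimate.

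For the convex-analytic cycle I would argue as follows. The implication $(ii)\Rightarrow(iii)$ is immediate from Proposition \ref{P:Log-Lip} applied to $f=G_\f$. For $(iii)\Rightarrow(iv)$ I fix $s\in\inte P$, set $\delta:=\dist(s,\partial P)$ and $v=\nabla G_\f(s)/\|\nabla G_\f(s)\|$ (the bound being trivial when $\nabla G_\f(s)=0$), and note that $y:=s+\delta v\in P$. The subgradient inequality for the convex function $G_\f$ gives $G_\f(y)-G_\f(s)\geq\delta\,\|\nabla G_\f(s)\|$, while the Log-Lipschitz estimate bounds the left-hand side by $C_0\delta\log(C_1/\delta)$; dividing by $\delta$ yields $(iv)$ after absorbing the region where $\delta$ is bounded below into the constant. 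For $(iv)\Rightarrow(ii)$ I exponentiate: from $\|\nabla G_\f(s)\|\leq C\log(C/\dist(s,\partial P))$ one gets $e^{\e\|\nabla G_\f\|}\leq (C/\dist(\cdot,\partial P))^{\e C}$, and since the negative powers $\dist(\cdot,\partial P)^{-\a}$ are integrable on the polytope $P$ precisely for $\a<1$, choosing $\e$ with $\e C<1$ gives $(ii)$.

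The heart of the matter is the equivalence $(i)\Leftrightarrow(ii)$. For any $\psi\in PSH_{tor}(X,\omega)$ one has $\psi=(F_\psi-F_0)\circ L$ on $(\C^\star)^n$, and since $MA(\f)$ puts no mass on $X\setminus(\C^\star)^n$, Corollary \ref{C:Etor} applied to the nonnegative continuous weight $\chi=e^{\e(F_0-F_\psi)}$ rewrites
\[
\int_X e^{-\e\psi}\,MA(\f)=n!\int_{\inte P}e^{\e(F_0-F_\psi)(\nabla G_\f(s))}\,dV(s).
\]
For $(ii)\Rightarrow(i)$ I bound the exponent uniformly in $\psi$: Lemma \ref{L:F0FP} gives $F_0\leq F_P+C$, while convexity of $F_\psi$ together with $\nabla F_\psi(0)\subset P$ (Proposition \ref{P:torpsh}) gives $F_\psi(x)\geq F_\psi(0)-R_P\|x\|$ with $R_P=\sup_{s\in P}\|s\|$; combined with $F_P(x)\leq a\|x\|$ from Lemma \ref{L:conv4}, this yields $(F_0-F_\psi)(x)\leq(a+R_P)\|x\|+C_\psi$, where $C_\psi$ is a finite $\psi$-dependent constant and $a+R_P$ depends only on $P$. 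Taking $\e=\e_0/(a+R_P)$, with $\e_0$ the exponent from $(ii)$, the integral is dominated by $e^{\e C_\psi}\int_P e^{\e_0\|\nabla G_\f\|}\,dV<+\infty$, and this single $\e$ works for every $\psi$. For the converse $(i)\Rightarrow(ii)$ I test $(i)$ against the single shrunken support function $\psi_0$ determined by $F_{\psi_0}=\tfrac12 F_P$, which lies in $PSH_{tor}(X,\omega)$ by Proposition \ref{P:torpsh}; here $F_0-F_{\psi_0}\geq\tfrac12 F_P-C\geq\tfrac{b}{2}\|x\|-C$ by Lemmas \ref{L:F0FP} and \ref{L:conv4}, so finiteness of $\int_X e^{-\e\psi_0}\,MA(\f)$ forces $\int_P e^{\e b\|\nabla G_\f\|/2}\,dV<+\infty$, which is $(ii)$.

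The main obstacle is organizational rather than computational: the genuine real-analytic difficulty, namely passing from exponential integrability of $\|\nabla G_\f\|$ to true Log-Lipschitz continuity, is already isolated in Proposition \ref{P:Log-Lip}. Within this proof the delicate points are instead producing a single exponent $\e$ valid for all $\psi$ in $(ii)\Rightarrow(i)$, and identifying the extremal test function $\psi_0$ in $(i)\Rightarrow(ii)$. Both hinge on controlling $F_0-F_\psi$ linearly in $\|x\|$ from the two sides, which the support-function comparisons of Section \ref{S:convex} supply.
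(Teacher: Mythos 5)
Your proposal is correct and follows essentially the same route as the paper: the equivalence $(i)\Leftrightarrow(ii)$ via the toric dictionary (Lemma \ref{L:MAR}/Corollary \ref{C:Etor}) and the linear two-sided control of $F_0-F_\psi$ coming from $\nabla F_\psi(\R^n)\subset P$, followed by the cycle $(ii)\Rightarrow(iii)\Rightarrow(iv)\Rightarrow(ii)$ with Proposition \ref{P:Log-Lip} carrying the analytic weight. The only deviations are cosmetic: the paper tests $(i)$ with the convex function $F=0$ instead of your $\tfrac12 F_P$, proves $(iii)\Rightarrow(iv)$ by the monotone difference quotient along the ray to $\partial P$ rather than your subgradient inequality at distance $\dist(s,\partial P)$, and leaves the $\dist(\cdot,\partial P)^{-\alpha}$ integrability in $(iv)\Rightarrow(ii)$ as "clear", which you spell out.
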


Recall that Guillemin's potentials are only Log-Lipschitz continuous on the Delzant polytope $P$, although
they correspond to smooth toric $\omega$-psh functions on $X$.
The observation we make here is that this regularity actually corresponds to a class of toric $\omega$-psh functions which seem to be merely H\"older continuous on $X$ (see Remark \ref{rem:holder}).

\begin{proof}
We set $\mu_\R(\f):=MA_\R(F_\f)$. Since $\f \in \E_{tor}(X,\omega)$ the measure $MA(\f)$ does not charge pluripolar sets, so by Lemma \ref{L:MAR},
\[\int_Xe^{-\e\psi}\,MA(\f)=\int_{(\C^\star)^n}e^{-\e(F_\psi-F_0)\circ L}\,(dd^c F_\f\circ L)^n=\int_{\R^n} e^{-\e(F_\psi-F_0)}\,d\mu_\R(\f), \]
for every $\psi\in PSH_{tor}(X,\omega)$. Using Lemma \ref{L:F0FP} and Proposition \ref{P:torpsh}, it follows that $(i)$ is equivalent to condition

\smallskip

$(i')\;\displaystyle \int_{\R^n} e^{-\e(F-F_P)}\,d\mu_\R(\f) <+\infty\,,\,\;\forall\, F:\R^n \rightarrow \R \text{ convex function with } \\ F \leq F_P+O(1) \text{ on } \R^n.$

\smallskip

To show $(i')\Leftrightarrow(ii)$, we may assume that $0\in\inte P$ and we fix constants $a,b >0$ such that the closed balls $\ov B(0,b)\subset P \subset\ov B(0,a)$. Then by Lemma \ref{L:conv4}, $b\|x\| \leq F_P(x)  \leq a\|x\|$.
If $(i')$ holds, we apply it with $F=0$ to conclude by Corollary \ref{C:Etor} that
\[\int_{\inte P}e^{\e b\|\nabla G_\f(s)\|}\,dV(s)=\int_{\R^n}e^{\e b\|x\|}\,d\mu_\R(\f)\leq\int_{\R^n} e^{\e F_P(x)} \,d\mu_\R(\f) <+\infty,\]
which gives $(ii)$. Conversely, assume $(ii)$ holds and let $F$ be a function as in $(i')$. Then by Proposition \ref{P:torpsh}, $\nabla F(\R^n)\subset P\subset\ov B(0,a)$, so
\[F(x)=F(0)+\int_0^1\langle\nabla F(tx),x\rangle\,dt\geq-a\|x\|+F(0)\,.\]
Therefore $F_P(x)-F(x)\leq 2a\|x\|-F(0)$ and
\begin{equation*}
\begin{split}
\int_{\R^n}e^{-\frac{\e}{2a}\,(F-F_P)}\,d\mu_\R(\f) & \leq e^{-\frac{\e}{2a}\,F(0)}\,\int_{\R^n}e^{\e\|x\|}\,d\mu_\R(\f) \\
& =e^{-\frac{\e}{2a}\,F(0)}\,\int_{\inte P}e^{\e\|\nabla G_\f(s)\|}\,dV(s)<+\infty\,,
\end{split}
\end{equation*}
so $(i')$ holds.

Proposition \ref{P:Log-Lip} shows that $(ii)$ implies $(iii)$. We prove next that $(iii)$ implies $(iv)$. Since $G_\f$ is Log-Lipschitz on the compact polytope $P$ it follows that there exists a constant $C>0$ such that $\|s-s'\|\leq C/2$ and
\[|G_\f(s)-G_\f(s')|\leq C\|s-s'\|\log\frac{C}{\|s-s'\|}\,,\]
for all $s,s'\in P$. Let $s\in\inte P$ be such that $G_\f$ is differentiable at $s$, $\nabla G_\f(s)\neq0$, and let $\nu$ be the unit vector in the direction of $\nabla G_\f(s)$. We consider the convex function
\[g(t)=G_\f(s+t\nu),\;0\leq t\leq t^\star,\]
where $t^\star>0$ is defined such that $s^\star:=s+t^\star\nu\in\partial P$. Then $t^\star=\|s^\star-s\|\geq\dist(s,\partial P)$ and
\begin{equation*}
\begin{split}
\|\nabla G_\f(s)\| & =g'(0)\leq\frac{g(t^\star)-g(0)}{t^\star}=\frac{G_\f(s^\star)-G_\f(s)}{\|s^\star-s\|}\\
& \leq C\log\frac{C}{\|s^\star-s\|}\leq C\log\frac{C}{\dist(s,\partial P)}\,.
\end{split}
\end{equation*}

Finally, we note that $(iv)$ clearly implies that $(ii)$ holds with $\e>0$ small enough, and the proof is complete.
\end{proof}

\begin{Remark} \label{rem:holder}
It is tempting to think that these conditions are all equivalent to the fact that
$\f$ is H\"older continuous. This is easily seen to be the case when $n=1$.
We refer the interested reader to \cite{DDGHKZ} for more information, geometric motivations,
 and related questions connecting the
H\"older continuity of Monge-Amp\`ere potentials to the integrability properties of the associated
complex Monge-Amp\`ere measure.
\end{Remark}

\begin{Example}
Fix $0< \a<1$ and consider the convex function $F:\R \rightarrow \R$ defined
by $F(x)=e^{\a x}$ when $x \leq 0$ and $F(x)=x+1$ when $x \geq 0$.
It determines a H\"older continuous toric $\omega_{FS}$-psh function $\f$ on $\P^1$,
which is defined in $\C$ by
\[\f(z)=\begin{cases}
|z|^\a -\log \sqrt{1+|z|^2} & \text{ if } |z| \leq 1, \\
\log |z|+1 -\log \sqrt{1+|z|^2} & \text{ if } |z| \geq 1.
\end{cases}\]
We let the reader check that the Legendre transform of $F$ is given by
\[G(s)=\begin{cases}
\frac{s}{\a} \log\frac{s}{\a} -\frac{s}{\a} & \text{if } 0 \leq s \leq \a,\\
-1 & \text{if } \a \leq s \leq 1.
\end{cases}\]
\end{Example}

\bigskip
\smallskip

\noindent{\bf Aknowledgements :}{ This paper is dedicated to the memory of Professor J\'ozef Siciak, who started a systematical study of extremal plurisubharmonic functions in the early sixties, laying the first stones of Pluripotential Theory. His ideas have been very influential.}


\begin{thebibliography}{XXXXXXXX}

\bibitem[ALZ16]{ALZ} C.~Arezzo, A.~Loi, F.~Zuddas: Some remarks on the symplectic and K\"ahler geometry of toric varieties. Ann. Mat. Pura Appl. (4) {\bf 195} (2016), no. 4, 1287--1304.


\bibitem [BT76] {BT76} E.~Bedford, B.~A.~Taylor:
The Dirichlet problem for a complex Monge-Amp\`ere equation. Invent. Math. {\bf 37} (1976), no. 1, 1--44.

\bibitem [BT82] {BT82} E.~Bedford, B.~A.~Taylor:
A new capacity for plurisubharmonic functions. Acta Math. {\bf 149} (1982), no. 1-2, 1--40.



\bibitem [BeBe13] {BerBer13}
R.~Berman, B.~Berndtsson:
\newblock Real Monge-Amp\`ere equations and K\"ahler-Ricci solitons on toric log Fano varieties.
\newblock Annales de la Facult\'e des Sciences de Toulouse {\bf 22} (2013), no. 4 , 649--711.






\bibitem [BBEGZ] {BBEGZ} R.~Berman, S.~Boucksom, P.~Eyssidieux, V.~Guedj, A.~Zeriahi: K\"ahler-Ricci flow and Ricci iteration on log-Fano varieties.
To appear in Crelle.










\bibitem [BEGZ10]{BEGZ10} S.~Boucksom, P.~Eyssidieux, V.~Guedj, A.~Zeriahi: Monge-Amp{\`e}re equations in big cohomology classes. Acta Math. {\bf 205} (2010), 199--262.


\bibitem[CDG02]{CDG02}
D.~Calderbank, L.~David, P.~Gauduchon:
\newblock The Guillemin formula and K\"ahler metrics on toric symplectic manifolds.
\newblock J. Symplectic Geom. {\bf 1} (2002), no. 4, 767--784.



\bibitem[CGZ08]{CGZ08} D.~Coman, V.~Guedj, A.~Zeriahi: Domains of definition of complex Monge-Amp\`ere operators on compact K\"ahler manifolds. Math. Z.  {\bf 259}  (2008), 393--418.



\bibitem[Dar15]{Dar15}
T.~Darvas:
\newblock The Mabuchi geometry of finite energy classes.
\newblock  Adv. Math. {\bf 285} (2015), 182--219.




\bibitem[Del88]{Del88}
T.~Delzant:
\newblock Hamiltoniens p\'eriodiques et images convexes de l'application moment.
\newblock Bull. Soc. Math. France {\bf 116} (1988), no. 3, 315--339.


\bibitem[DDGHKZ14]{DDGHKZ}
J.-P.~Demailly, S.~Dinew, V.~Guedj, P.~H.~Hiep, S.~Kolodziej, A.~Zeriahi:
H\"older continuous solutions to Monge-Amp\`ere equations.
J.E.M.S. {\bf 16} (2014), 619--647.


\bibitem[DiN15]{DiN15}
E.~DiNezza: Finite pluricomplex energy measures.
Potential Analysis {\bf 44} (2016), no. 1, 155--167.









\bibitem [G14] {G14}
V.~Guedj:
\newblock The metric completion of the Riemannian space of K\"ahler metrics.
\newblock Preprint  arXiv:1401.7857.


\bibitem [GZ07] {GZ07} V.~Guedj, A.~Zeriahi: The weighted Monge-Amp{\`e}re energy
   of quasiplurisubharmonic functions. J. Funct. Anal. {\bf 250} (2007), 442--482.

\bibitem [Gui94] {Gui94}
V.~Guillemin:
\newblock K\"ahler structures on toric varieties.
\newblock  J. Differential Geom. {\bf 40} (1994), no. 2, 285--309.

\bibitem [Gut01] {Gut01}
C.~Gutierrez:
\newblock The Monge-Amp\`ere equation.
\newblock Progress in Nonlinear Differential Equations and their Applications, 44. Birkh\"auser Boston,  2001. xii+127 pp.






\bibitem [RR91]{RR}  M.~M.~Rao, Z.~D.~Ren: Theory of Orlicz spaces. Monographs and Textbooks in Pure and Applied Math. Marcel Dekker, 1991. xii+449pp.






\bibitem[T82]{T82} B.~A.~Taylor: An estimate for an extremal plurisubharmonic function on
$\C^n$. S\'eminaire d'analyse, ann\'ees 1982/1983, ed. P. Lelong,
P. Dolbeault, and H. Skoda, Lecture Notes in Mathematics 1028, Springer, 1983, 318--328.



\end{thebibliography}
\end{document}